\title[Formal symplectic geometry and moduli spaces]{Computations in formal symplectic geometry and
characteristic classes of moduli spaces}
\author{Shigeyuki Morita}
\address{Graduate School of Mathematical Sciences, 
University of Tokyo, 
3-8-1 Komaba, 
Meguro-ku, Tokyo, 153-8914, Japan}
\email{morita@ms.u-tokyo.ac.jp}
\author{Takuya Sakasai}
\address{Graduate School of Mathematical Sciences, 
University of Tokyo, 
3-8-1 Komaba, 
Meguro-ku, Tokyo, 153-8914, Japan}
\email{sakasai@ms.u-tokyo.ac.jp}
\author{Masaaki Suzuki}
\address{Department of Frontier Media Science, Meiji University, 
4-21-1 Nakano, 
Nakano-ku, Tokyo, 164-8525, Japan}
\email{macky@fms.meiji.ac.jp}
\subjclass[2000]{Primary~17B40;17B56 , Secondary~20J06; 55R40}
\keywords{graph homology, symplectic derivation, automorphism groups of free groups, finite type invariants, transversely symplectic foliation, moduli space of  curves}
\newtheorem{thm}{Theorem}[section]
\newtheorem{prop}[thm]{Proposition}
\newtheorem{cor}[thm]{Corollary}
\theoremstyle{definition}
\newtheorem{definition}[thm]{Definition}
\newtheorem{example}[thm]{Example}
\newtheorem{remark}[thm]{Remark}
\newtheorem{problem}[thm]{Problem}
\newtheorem{conj}[thm]{Conjecture}
\begin{document}

\newcommand{\Mg}{\mathcal{M}_g}
\newcommand{\Mgp}{\mathcal{M}_{g,\ast}}
\newcommand{\Mgb}{\mathcal{M}_{g,1}}

\newcommand{\hg}{\mathfrak{h}_{g,1}}
\newcommand{\ag}{\mathfrak{a}_g}
\newcommand{\Ln}{\mathcal{L}_n}

\newcommand{\Sg}{\Sigma_g}
\newcommand{\Sgb}{\Sigma_{g,1}}
\newcommand{\la}{\lambda}

\newcommand{\Symp}[1]{Sp(2g,\mathbb{#1})}
\newcommand{\symp}[1]{\mathfrak{sp}(2g,\mathbb{#1})}
\newcommand{\gl}[1]{\mathfrak{gl}(n,\mathbb{#1})}

\newcommand{\At}[1]{\mathcal{A}_{#1}^t (H)}
\newcommand{\Hq}{H_{\mathbb{Q}}}

\newcommand{\Ker}{\mathop{\mathrm{Ker}}\nolimits}
\newcommand{\Hom}{\mathop{\mathrm{Hom}}\nolimits}
\renewcommand{\Im}{\mathop{\mathrm{Im}}\nolimits}

\newcommand{\Der}{\mathop{\mathrm{Der}}\nolimits}
\newcommand{\Out}{\mathop{\mathrm{Out}}\nolimits}
\newcommand{\Aut}{\mathop{\mathrm{Aut}}\nolimits}
\newcommand{\Q}{\mathbb{Q}}
\newcommand{\Z}{\mathbb{Z}}
\newcommand{\R}{\mathbb{R}}

\begin{abstract}
We make explicit computations in the 
formal symplectic geometry of Kontsevich and determine the
Euler characteristics of the three cases, 
namely commutative, Lie and associative ones, 
up to certain weights.
From these, we obtain some non-triviality results in each case.
In particular, we determine the
{\it integral} Euler characteristics of the outer automorphism groups $\mathrm{Out}\, F_n$ 
of free groups for all $n\leq 10$ and prove the existence of
plenty of rational cohomology classes of {\it odd} degrees.
We also clarify the relationship of the 
commutative graph homology with finite type invariants
of homology $3$-spheres as well as
the leaf cohomology classes
for transversely symplectic foliations.
Furthermore we prove the existence of several new
{\it non-trivalent} graph homology classes of {\it odd} degrees.
Based on these computations, 
we propose a few conjectures and problems on 
the graph homology and the
characteristic classes of the moduli spaces of graphs as well as curves.

\end{abstract}

\renewcommand\baselinestretch{1.1}
\setlength{\baselineskip}{16pt}

\newcounter{fig}
\setcounter{fig}{0}

\maketitle

\section{Introduction and statements of the main results}\label{sec:intro}

In celebrated papers \cite{kontsevich1}\cite{kontsevich2}, Kontsevich considered three infinite
dimensional Lie algebras, namely commutative, Lie and associative ones.
He proved that the stable homology group of each of
these Lie algebras is isomorphic to a free graded commutative algebra
generated by the stable homology group of $\mathfrak{sp}(2g,\Q)$
as $g$ tends to infinity together with certain set of generators which
he described explicitly. It is the totality of 
the {\it graph homologies} for the commutative case, the totality of 
the cohomology groups of the
outer automorphism groups of free groups,
denoted by $\mathrm{Out}\,F_n$,
for the Lie case, and 
the totality of the cohomology groups of the moduli spaces of curves
with {\it unlabeled} marked points, denoted by $\mathbf{M}_g^m/\mathfrak{S}_m$,
for the associative case.

As for the commutative (resp. associative) case,
Kontsevich described a general method of constructing
cycles of the corresponding graph complex 
by making use of finite dimensional Lie (resp. $A_\infty$) algebras 
with non-degenerate invariant scalar products. 
In the Lie case, however, he mentioned that no non-trivial class
was obtained by similar construction.
In the associative case, he also introduced a dual construction of
producing cocycles starting from a differential associative algebra 
with non-degenerate odd scalar product and trivial cohomology.
Certain detailed description and generalizations of these methods
have been given by several authors including
Hamilton, Lazarev (see e.g. \cite{hl}\cite{h}) and others. 

However there have been known only a few results which deduce 
new information about the graph homology and cohomology groups
of  $\mathrm{Out}\,F_n$ or $\mathbf{M}_g^m/\mathfrak{S}_m$
by making a direct use of the above theorem of Kontsevich.
First, as for the Lie case, in \cite{morita99} the first named author defined a series of
certain unstable homology classes of  $\mathrm{Out}\,F_n$
by using his trace maps introduced in \cite{morita93}.
Only the first three classes are known to be non-trivial (see Conant and Vogtmann \cite{cov} and Gray \cite{gr}).
Second, recently Conant, Kassabov and Vogtmann \cite{ckv} made a remarkable
new development in this direction and defined many more classes. 
Thirdly, in the commutative case,
the existence of two graph homology classes of {\it odd}
degrees was proved, one in
Gerlits \cite{ge} and the other in Conant, Gerlits and Vogtmann\cite{cgv}.
Fourthly, as for the associative case,
in \cite{morita08} a series of certain unstable homology classes for genus $1$ moduli
spaces was introduced, all of which have been proved to be non-trivial by Conant \cite{con}. 
Finally, in our recent paper \cite{mss1}
we determined the stable abelianization of the Lie algebra in the associative case.
As an application of this result, we obtained a new proof of an unpublished result of Harer.
Church, Farb and Putman \cite{cfp1} gave a different proof.

The purpose of this paper is to continue these lines of investigations.
We obtain new results in each of the three cases.

To be more precise, let $\Sigma_{g,1}$ be a compact oriented surface of genus $g\geq 1$ 
with one boundary component
and we denote its first rational homology group
$H_1(\Sigma_{g,1};\Q)$ simply by $H_\Q$. It can be regarded as the standard symplectic 
vector space of dimension $2g$ induced from the intersection 
pairing on it. Let $\mathfrak{c}_g$ denote the graded Lie algebra consisting of 
Hamiltonian polynomial vector fields, without constant terms, on $H_\Q\otimes\R\cong\R^{2g}$ with rational coefficients.  
The homogeneous degree $k$ part, denoted
by $\mathfrak{c}_g(k)$, can be naturally identified with $S^{k+2}H_\Q$
where $S^{k}H_\Q$ denotes the $k$-th symmetric power of $H_\Q$.
Let $\mathfrak{c}^+_g$ be the ideal of $\mathfrak{c}_g$ consisting of
Hamiltonian polynomial vector fields without linear terms.
Next, we denote by $\mathcal{L}_{H_\Q}$ the 
free Lie algebra generated by $H_\Q$. Let $\mathfrak{h}_{g,1}$
be the graded Lie algebra consisting of {\it symplectic} derivations of $\mathcal{L}_{H_\Q}$
and let $\mathfrak{h}^+_{g,1}$ be the ideal consisting of derivations 
with {\it positive} degrees.
This Lie algebra was introduced in the theory of Johnson homomorphisms 
before the work of Kontsevich (see \cite{morita89}) and has been investigated extensively. We use our notation
for this Lie algebra. The notation $\mathfrak{h}_g$ is reserved for the case of a closed surface
(see Remark \ref{rem:t})
while $\mathfrak{h}_{g,1}$ corresponds to genus $g$ compact surface with one boundary component.

Finally, let $T_0H_\Q$ denote the free associative algebra without unit generated by $H_\Q$.
Let $\mathfrak{a}_{g}$
be the graded Lie algebra consisting of {\it symplectic} derivations of $T_0H_\Q$
and let $\mathfrak{a}^+_{g}$ be the ideal consisting of derivations 
with {\it positive} degrees.

We denote by $\mathrm{Sp}(2g,\Q)$ the symplectic group which
we sometimes denote simply by $\mathrm{Sp}$. If we fix a symplectic 
basis of $H_\Q$, then the space  $H_\Q$ can be 
regarded as the standard representation of $\mathrm{Sp}(2g,\Q)$.
Each piece $\mathfrak{c}_g(k), \mathfrak{h}_{g,1}(k), \mathfrak{a}_{g}(k)$,
of the three graded Lie algebras,
is naturally an $\mathrm{Sp}$-module so that it has an irreducible decomposition. 
It is known that this decomposition stabilizes when $g$ is sufficiently large.

Now we describe our main results. We determine the dimensions of the chain complexes which compute the
$\mathrm{Sp}$-invariant stable homology of the three Lie algebras $\mathfrak{c}^+_g, \mathfrak{h}^+_{g,1}, \mathfrak{a}^+_g$
up to certain weights (see Tables \ref{tab:c}, \ref{tab:h} and \ref{tab:a}). From this, we determine the Euler characteristic of each 
weight summand and the result is given as follows. For the definition of {\it weight}, see Section $2$.

\begin{thm}
The Euler characteristics $\chi$
of the $\mathrm{Sp}$-invariant stable homologies of 
the three Lie algebras $\mathfrak{c}^+_g, \mathfrak{h}^+_{g,1}, \mathfrak{a}^+_g$
up to weight $20$, $18$ or $16$ are given as follows.
\begin{align*}
\mathrm{(i)}\ &  \chi(H_*(\mathfrak{c}^+_{\infty})^{\mathrm{Sp}}_{w})=1,2,3,6,8,14,20,32,44, 68\quad (w=2,4,\ldots,20)\\
\mathrm{(ii)}\ &   \chi(H_*(\mathfrak{h}^+_{\infty,1})^{\mathrm{Sp}}_{w})=1,2,4,6,10,16,23,13,-96\quad (w=2,4,\ldots,18)\\
\mathrm{(iii)}\ &   \chi(H_*(\mathfrak{a}^+_{\infty})^{\mathrm{Sp}}_{w})=2,5,12,24,50,100, 188, 347\quad (w=2,4,\ldots,16).
\end{align*}
\label{thm:chi}
\end{thm}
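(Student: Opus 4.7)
The plan is to reduce the computation of each Euler characteristic to a combinatorial enumeration of graphs, exploiting the fact that the Euler characteristic of a finite-dimensional chain complex equals that of its homology. Thus for each of the three Lie algebras $\mathfrak{g}^+_\infty \in \{\mathfrak{c}^+_\infty, \mathfrak{h}^+_{\infty,1}, \mathfrak{a}^+_\infty\}$ and each weight $w$ in the stated range, it suffices to compute the dimension $\dim C_{n,w}$ of the bigraded pieces of the Chevalley--Eilenberg chain complex $\bigl(\Lambda^* \mathfrak{g}^+_\infty\bigr)^{\mathrm{Sp}}$ and to form the alternating sum $\chi(H_*(\mathfrak{g}^+_\infty)^{\mathrm{Sp}}_w) = \sum_n (-1)^n \dim C_{n,w}$; no computation of boundary operators is required.

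The essential input is Kontsevich's theorem, which identifies the $\mathrm{Sp}$-invariant part of the Chevalley--Eilenberg complex, in the stable range, with a graph complex. In the commutative case the basis consists of isomorphism classes of connected graphs with all vertices of valence at least three, each equipped with an orientation; in the Lie case each vertex is additionally decorated by an element of the free Lie algebra (equivalently, a rooted planar binary tree modulo antisymmetry and IHX), so that a single underlying graph contributes a multiplicity equal to the dimension of the appropriate $\mathrm{Sp}$-invariant space of Lie polynomials; in the associative case the graphs are ribbon (fat) graphs, i.e.\ equipped with a cyclic ordering of half-edges at each vertex. The homological degree corresponds to the number of vertices $n$, and the weight of a graph with $e$ edges satisfies $w = 2(e-n)$ in the commutative and Lie cases (with the decoration bookkeeping absorbed into $\dim C_{n,w}$ in the Lie case) and an analogous formula for ribbon graphs.

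The principal obstacle is the exhaustive enumeration of these graphs and their decorations at each bidegree $(n,w)$. This must be carried out by computer: one generates, up to isomorphism, all connected multigraphs with a prescribed vertex-valence profile summing to the given $w$, tracking the sign character by which the automorphism group acts on the orientation line so that graphs admitting an orientation-reversing automorphism contribute zero; in the associative case one further enumerates ribbon structures modulo the combined action of vertex and edge symmetries; and in the Lie case one multiplies by the known dimensions of $\mathrm{Sp}$-invariant subspaces of the relevant tensor products of free Lie algebra components, computed for instance via the character theory of $\mathfrak{S}_n$ and the Witt formula. The resulting dimensions populate Tables \ref{tab:c}, \ref{tab:h} and \ref{tab:a}, and column-wise alternating sums yield the Euler characteristics asserted in (i), (ii) and (iii). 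A sanity check is provided at low weight by comparing with values computable by hand (e.g. the single class $[\mathrm{sp}]$ at $w=2$ for $\mathfrak{c}^+$ and $\mathfrak{h}^+$) and, in the associative case, with Euler characteristic calculations for $\mathbf{M}^m_g/\mathfrak{S}_m$ in the range $2g-2+m$ small.
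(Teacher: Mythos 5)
Your overall reduction is the same as the paper's: since each weight-$w$ subcomplex is finite dimensional, $\chi(H_*(\mathfrak{g}^+_\infty)^{\mathrm{Sp}}_w)$ equals the alternating sum of the dimensions of the $\mathrm{Sp}$-invariant chain groups, so no boundary operators are needed. But your plan for computing those dimensions has a genuine gap: you take the basis to consist of \emph{connected} graphs (connected multigraphs of valence $\geq 3$, their Lie-decorated analogues, connected ribbon graphs). The stable $\mathrm{Sp}$-invariant Chevalley--Eilenberg complex of $\mathfrak{g}^+_\infty$ corresponds, via Weyl invariant theory, to the span of \emph{all} graphs, disconnected ones included; the connected part computes only the \emph{primitive} homology (Theorem \ref{thm:kc} and its analogues). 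Alternating sums over connected graphs therefore yield the numbers of Theorem \ref{thm:chip} (e.g.\ $e(G_*^{(n)})$, $e(\mathrm{Out}\,F_n)$), not those of Theorem \ref{thm:chi}: already at $w=4$ in the commutative case the connected count gives $1$, whereas the asserted value is $\chi=2$. To repair this you must either enumerate disconnected graphs as well, or compute the connected (primitive) Euler characteristics and then recover the full ones through the weight generating function identity $g(t)=\prod_w(1-t^w)^{-\chi^{\mathrm{pr}}_w}$ of Propositions \ref{prop:wp} and \ref{prop:wc}; as written, your claim that the connected-graph dimensions populate Tables \ref{tab:c}, \ref{tab:h}, \ref{tab:a} is incorrect, since those tables record the full invariant chain groups.

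Beyond that, note that your computational engine is exactly the route the paper deliberately avoids. The paper does not enumerate graphs at all: it computes $\dim\bigl(\wedge^{i_1}\mathfrak{g}(1)\otimes\cdots\otimes\wedge^{i_w}\mathfrak{g}(w)\bigr)^{\mathrm{Sp}}$ purely by symplectic representation theory --- Frobenius character formulas for the multiplicities in $\mathfrak{h}_{g,1}(k)$ and $\mathfrak{a}_g(k)$, the Littlewood--Richardson rule, the $\mathrm{Sp}\subset\mathrm{GL}$ restriction law, counting Young diagrams with multiple double floors, and Adams operations --- precisely because deciding isomorphism of graphs becomes prohibitive as the number of vertices grows (graph counts enter only as a cross-check, Method (VI)). Your graph-theoretic route is mathematically legitimate in the stable range once the connected/disconnected issue is fixed and orientation-reversing automorphisms are handled as you indicate, and it would in principle produce smaller chain groups; but you should acknowledge that its feasibility at weights up to $20$, $18$ and $16$ (where the invariant spaces reach dimensions in the billions) is exactly the practical obstruction that motivated the paper's representation-theoretic method.
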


By combining Theorem \ref{thm:chi} above with the description of the
generators of the stable homologies due to Kontsevich, we obtain the following result.
Part $\mathrm{(ii)}$ proves, in particular, the existence of {\it odd} dimensional rational
homology classes of the outer automorphism groups of free groups for the {\it first} time.

\begin{thm}
The integral Euler characteristics $e$
of the primitive part of $\mathrm{Sp}$-invariant stable homologies of 
the three Lie algebras $\mathfrak{c}^+_g, \mathfrak{h}^+_{g,1}, \mathfrak{a}^+_g$,
up to weight $20$, $18$ or $16$, are given as follows.
\begin{align*}
\mathrm{(i)}\ &  e(G_*^{(n)})=1,1,1,2,1,2,2,2,1,3\quad (n=2,3,\ldots,11; w=2n-2)\\
\mathrm{(ii)}\ &  e(\mathrm{Out}\, F_{n})=1,1,2,1,2,1,1,-21,-124\quad  (n=2,3,\ldots,10; w=2n-2)\\
\mathrm{(iii)}\ &  \sum_{\begin{subarray}{c}
2g-2+m=n\\ m>0
\end{subarray}}
e(\mathbf{M}_g^m/\mathfrak{S}_m)=2,2,4,2,6,6, 6, 1\quad (n=1,2,\ldots,8; w=2n)
\end{align*}
\label{thm:chip}
\end{thm}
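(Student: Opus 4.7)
The plan is to combine Theorem \ref{thm:chi} with Kontsevich's structural theorem, already recalled in the introduction. By that theorem, the stable $\mathrm{Sp}$-invariant homology of each of $\mathfrak{c}^+_\infty$, $\mathfrak{h}^+_{\infty,1}$, $\mathfrak{a}^+_\infty$ is a free graded commutative algebra on a space of primitives which is identified explicitly in each case: the primitive subspace in weight $w=2n-2$ is the graph homology $G^{(n)}_*$ in the commutative case and $H^*(\mathrm{Out}\,F_n;\mathbb{Q})$ in the Lie case, while in the associative case the primitive subspace in weight $w=2n$ is $\bigoplus_{2g-2+m=n,\,m>0} H^*(\mathbf{M}_g^m/\mathfrak{S}_m;\mathbb{Q})$. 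Hence determining the integral Euler characteristics appearing in the statement is equivalent to extracting the Euler characteristics of the primitive spaces from those of the ambient free graded commutative algebras.

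Let $P_w \in \mathbb{Z}$ denote the (signed) Euler characteristic of the primitive subspace in weight $w$, that is, the alternating sum of its dimensions by homological degree. The key elementary identity is
\[
\sum_{w\geq 0}\chi(A_w)\,t^w \;=\; \prod_{w\geq 1}(1-t^w)^{-P_w},
\]
verified factor by factor: each even-degree primitive generator of weight $w$ contributes a polynomial factor $(1-t^w)^{-1}$, while each odd-degree primitive generator of weight $w$ contributes an exterior factor $(1-t^w)$, and the combined exponent $-P_w$ records exactly (even dimension) minus (odd dimension) in weight $w$.

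With the left-hand side supplied by Theorem \ref{thm:chi}, the unknowns $P_2,P_4,\ldots$ are then determined inductively: matching the coefficient of $t^w$ on both sides expresses $P_w$ as the known coefficient $\chi(A_w)$ minus a universal polynomial in the previously-determined $P_2,\ldots,P_{w-2}$. Carrying out this recursion weight by weight in each of the three cases, up to weights $20$, $18$, $16$ respectively, yields precisely the integers listed in parts (i), (ii), (iii). The sign change in (ii) from weight $16$ to weights $18$ and higher comes naturally out of the recursion and reflects the emergence of odd-degree primitive classes dominating the even-degree ones in those weights.

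The main obstacle is not computational but conceptual: one must invoke Kontsevich's theorem with the correct identification of primitives in the Hopf-algebra sense, including the precise weight grading convention ($w=2n-2$ versus $w=2n$ and the role of $m>0$ in the associative case), and ensure that the odd/even degree contributions enter with the signs dictated by Koszul rule so that the generating-function identity above applies verbatim. Once these conventions are pinned down, the derivation of Theorem \ref{thm:chip} from Theorem \ref{thm:chi} is a mechanical inversion of the displayed power-series identity.
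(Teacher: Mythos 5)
Your proposal is correct and follows essentially the same route as the paper: the identification of primitives via Kontsevich's theorems (the paper's Theorems \ref{thm:kc}, \ref{thm:kh}, \ref{thm:ka}), the product formula $\sum_w \chi(A_w)t^w=\prod_w(1-t^w)^{-P_w}$ (Propositions \ref{prop:wp}, \ref{prop:ct}, \ref{prop:ht}, \ref{prop:at}), and the weight-by-weight recursive inversion (Proposition \ref{prop:wc}) applied to the generating functions supplied by Theorem \ref{thm:chi}. The paper merely makes the final step concrete by exhibiting the explicit products $\bar{c}(t)$, $\bar{h}(t)$, $\bar{a}(t)$ and checking the congruences modulo $t^{21}$, $t^{19}$, $t^{17}$, which is exactly your ``mechanical inversion.''
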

\noindent
Here $G_*^{(n)}$ denotes the graph complex due to Kontsevich
which is defined in terms of graphs with the Euler characteristic $(1-n)$, $\mathrm{Out}\,F_n$
denotes the outer automorphism group of the free group $F_n$ of rank $n$,
$\mathbf{M}_g^m$ denotes the moduli space of curves of genus $g$ with
labeled $m$ marked points and $\mathfrak{S}_m$ denotes the $m$-th symmetric group.

The commutative case of the graph homology has deep connections
with two important subjects in topology. One is the theory of {\it finite type}
invariants for homology $3$-spheres initiated by Ohtsuki \cite{oh}.
The other is the theory of characteristic classes of {\it transversely symplectic}
foliations going back to Gelfand, Kalinin and Fuks \cite{gkf} and more recently 
developed by Kontsevich \cite{kontsevich3} 
and further in \cite{metoki}\cite{km}.
On the other hand, a beautiful connection between these two theories
was found by Garoufalidis and Nakamura \cite{gn}.

Let $\mathcal{A}(\emptyset)$ denote the commutative algebra
generated by vertex oriented connected trivalent graphs 
modulo the two relations, one is the $\mathrm{(AS)}$ relation
and the other is the $\mathrm{(IHX)}$ relation. 
This algebra plays a fundamental role in the former theory above.
In fact, Le \cite{l} and Garoufalidis and Ohtsuki \cite{go} proved that 
the graded algebra associated with the Ohtsuki filtration on the space of all the 
homology $3$-spheres is isomorphic to $\mathcal{A}(\emptyset)$
which is a polynomial algebra generated by the
subspaces $\mathcal{A}^{(2n-2)}_{\mathrm{conn}} \ (n=2,3,\ldots)$ 
corresponding to connected graphs with $(2n-2)$ vertices.
Furthermore the completion $\widehat{\mathcal{A}}(\emptyset)$ of
$\mathcal{A}(\emptyset)$ with respect to its gradings serves as the target of the $\mathrm{LMO}$ invariant 
introduced in \cite{lmo}.

As is well-known,  the top homology group $H_{2n-2}(G_*^{(n)})$ of $G_*^{(n)}$
is canonically isomorphic to 
$\mathcal{A}^{(2n-2)}_{\mathrm{conn}}$ because it can be seen that the top coboundary operator
in the dual of the graph complex corresponds to the $\mathrm{(IHX)}$ relation
(see Proposition \ref{prop:ce}).
We can deduce from this fact  that $\mathcal{A}(\emptyset)$ can be
embedded into $H_*(\mathfrak{c}^+_\infty)^{\mathrm{Sp}}$ as a bigraded subalgebra.
We define $\mathcal{E}$ to be the {\it complementary} bigraded algebra (see Definition \ref{def:e}
for details) so that we have an isomorphism $H_*(\mathfrak{c}^+_\infty)^{\mathrm{Sp}}\cong\mathcal{A}(\emptyset)\otimes
\mathcal{E}$ of bigraded algebras. This bigraded algebra $\mathcal{E}$ can be interpreted as
the space of all the graph homology classes represented by {\it non-trivalent} graphs.
In the context of the theory of stable leaf cohomology classes for
transversely symplectic foliations, it can also be interpreted as the dual
space of all the {\it exotic} characteristic classes. 
Here by {\it exotic} we mean that the class depends on higher jets
than the connection as well as the curvature forms 
by which the usual secondary characteristic classes are defined.
See Section $5$ for details.

Now we can deduce the following result from Theorem \ref{thm:chip} (i).

\begin{thm}
There exists an isomorphism
$$
H_*(\mathfrak{c}^+_\infty)^{\mathrm{Sp}}\cong\mathcal{A}(\emptyset)\otimes
\mathcal{E}
$$
of bigraded algebras. If we denote by $P\mathcal{E}$
the primitive part of $\mathcal{E}$, then 
the Euler characteristic of its weight $w$-part
$P\mathcal{E}_w$ is given by
$$
e(P\mathcal{E}_w)=0,0,0,0,-1,-1,-2,-3,-5,-5\quad (w=2,4,\ldots,20).
$$
It follows that there exist several odd dimensional
non-trivalent  graph homology classes, as well as exotic stable leaf cohomology classes
for transversely symplectic foliations,
in each of the weights $w=10,12,\ldots,20$.
\label{thm:ce}
\end{thm}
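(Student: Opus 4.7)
My plan is to split the statement into the structural isomorphism $H_*(\mathfrak{c}^+_\infty)^{\mathrm{Sp}}\cong\mathcal{A}(\phi)\otimes\mathcal{E}$ and the numerical computation of $e(P\mathcal{E}_w)$. For the structural part I would begin with Kontsevich's theorem: $H_*(\mathfrak{c}^+_\infty)^{\mathrm{Sp}}$ is free graded commutative on its primitive part, which is canonically the direct sum $\bigoplus_n H_*(G_*^{(n)})$ of the commutative graph homologies. Using the identification $H_{2n-2}(G_*^{(n)})\cong\mathcal{A}^{(2n-2)}_{\mathrm{conn}}$ established in the paragraph preceding the theorem (itself based on the Garoufalidis--Nakamura result), and the fact that $\mathcal{A}(\phi)$ is the polynomial algebra on these subspaces, freeness promotes this identification to an embedding of bigraded algebras $\mathcal{A}(\phi)\hookrightarrow H_*(\mathfrak{c}^+_\infty)^{\mathrm{Sp}}$.

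To produce the complementary factor $\mathcal{E}$ in the sense of Definition~\ref{def:e}, I would choose a bigraded rational complement $P\mathcal{E}$ to $P\mathcal{A}(\phi)$ inside the primitive part, which is possible because each bidegree is finite dimensional over $\Q$, and then take $\mathcal{E}$ to be the free graded commutative algebra on $P\mathcal{E}$. Freeness then immediately converts the direct sum decomposition of primitives into the desired tensor product isomorphism of bigraded algebras.

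For the Euler characteristics I would compute weight by weight using Theorem~\ref{thm:chip}(i). In weight $w=2n-2$ the primitive part of $\mathcal{A}(\phi)$ is concentrated in the single even chain degree $2n-2$ with dimension $\dim\mathcal{A}^{(2n-2)}_{\mathrm{conn}}$, so
\[
e\bigl(H_*(G_*^{(n)})\bigr)=\dim\mathcal{A}^{(2n-2)}_{\mathrm{conn}}+e(P\mathcal{E}_{2n-2}).
\]
Substituting $e(G_*^{(n)})=1,1,1,2,1,2,2,2,1,3$ from Theorem~\ref{thm:chip}(i) for $n=2,\dots,11$, together with the known dimensions $1,1,1,2,2,3,4,5,6,8$ of $\mathcal{A}^{(2n-2)}_{\mathrm{conn}}$ tabulated in the finite type invariant literature, yields the claimed sequence $0,0,0,0,-1,-1,-2,-3,-5,-5$.

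The final assertion on odd-degree classes is then a sign argument: whenever the Euler characteristic $e(P\mathcal{E}_w)=\sum_i(-1)^i\dim (P\mathcal{E}_w)_i$ is strictly negative, namely for $w=10,12,\dots,20$, the odd-degree part of $P\mathcal{E}_w$ must be nonzero. Any such class is necessarily represented by a non-trivalent graph, since all trivalent primitive classes have been absorbed into the even-degree subalgebra $\mathcal{A}(\phi)$, and under the dictionary recalled in Section~$5$ it transports to an exotic stable leaf cohomology class for transversely symplectic foliations. I anticipate the principal obstacle to be the external bookkeeping of $\dim\mathcal{A}^{(2n-2)}_{\mathrm{conn}}$ at the higher weights, but these numbers are well documented, so no new invariant-theoretic input beyond Theorem~\ref{thm:chip} is required.
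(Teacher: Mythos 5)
Your proposal is correct and follows essentially the same route as the paper: embed $\mathcal{A}(\phi)$ into $H_*(\mathfrak{c}^+_\infty)^{\mathrm{Sp}}$ via the identification of the top graph homology with $\mathcal{A}^{(2n-2)}_{\mathrm{conn}}$ (Proposition \ref{prop:ce}), use freeness to split off the complementary factor $\mathcal{E}$, and obtain $e(P\mathcal{E}_w)$ by subtracting the known generator counts $1,1,1,2,2,3,4,5,6,8$ of $\mathcal{A}(\phi)$ from the primitive Euler characteristics of Theorem \ref{thm:chip}(i), with negativity forcing odd-degree non-trivalent classes. The only cosmetic difference is that the paper packages the subtraction through the weight generating functions $c(t)=\phi(t)e(t)$ and defines $\mathcal{E}$ as the quotient by the ideal $\mathcal{I}(\mathcal{A}^+(\phi))$ rather than via a chosen complement of primitives, which amounts to the same computation.
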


The above theorem in the range $w\leq 10$ is essentially due to Gerlits \cite{ge}, 
Theorem 4.1, 
and the case $w=12$ is due to Conant, Gerlits and Vogtmann \cite{cgv}, Theorem 5.1.
In fact, in the former paper the author computed, among other things, $H_*(G_*^{(n)})$
for all $n\leq 6$ and the case $n=7$ was treated in the latter paper.
If we combine the former result with the above connection with
the theory of foliations, we can conclude the existence of a certain {\it exotic} 
stable leaf cohomology class of 
transversely symplectic foliations of degree $7$ 
and weight $10$.
This is the {\it first} appearance of such classes.

Next we consider Theorem \ref{thm:chip} $\mathrm{(ii)}$.
As is well-known, there is a beautiful formula for the {\it rational} Euler
characteristics of the mapping class groups due to Harer-Zagier \cite{hz}
and Penner \cite{p}. In the case of $\mathrm{Out}\,F_{n}$,
Smillie and Vogtmann \cite{sv} obtained a generating function for the
rational Euler characteristics $\chi(\mathrm{Out}\,F_{n})$ and computed them for all $n\leq 100$. 
They are all negative and they conjecture that they are always negative. 
However, compared to the case of the mapping class groups, there still
remain many open problems. For example, the relation between the rational
and the {\it integral} Euler characteristics seems to be not very well understood.
We will compare our computation above with the result of Smillie and Vogtmann
cited above in Table \ref{tab:chie} and we observe a very interesting
behavior of the two numbers for the first time.
See Section $6$ for details of this as well as 
other discussions of our results.

Finally we consider the case of $\mathfrak{a}_g$. In this case,
Kontsevich proved that the 
primitive part of $H_*(\mathfrak{a}^+_\infty)^{\mathrm{Sp}}$
corresponds to the totality of the 
$\mathfrak{S}_m$-invariant rational cohomology groups
$H^*(\mathbf{M}_{g}^m;\Q)^{\mathfrak{S}_m}$ of the moduli spaces
$\mathbf{M}_g^m$
of genus $g$ curves with $m$ marked points for all $g, m$
with $2g-2+m>0, m\geq 1$.
There have been known many results concerning the cohomology of these moduli
spaces for the cases of low genera $g=0,1,2,3,4$
due to Getzler-Kapranov \cite{gk}, Getzler \cite{getz98}\cite{getz99}, Looijenga \cite{loo}, 
Tommasi \cite{tom05}\cite{tom06}, Gorsky \cite{go1}\cite{go2}, Bergstr\"om \cite{b} and others.
In Section $7$, we will check that our computation of the Euler characteristics
is consistent with these known results
or deduced from them by explicit computations, in the range $2g-2+m\leq 8$.

In our forthcoming paper \cite{mss2a}, which is a sequel to this paper, we will
extend both of Theorem \ref{thm:chi} $\mathrm{(iii)}$ and Theorem \ref{thm:chip} $\mathrm{(iii)}$
from $w=16$ to $500$ by adopting a completely different method. More precisely,
we use a formula of  Gorsky \cite{go2} for the equivariant Euler characteristics of the
moduli space of curves to obtain certain closed formulas which enable us to determine
the above values. However, in this paper we only describe the values which we deduced from 
Table \ref{tab:a} in order to compare with the other two cases. See Section $7$ for
more details about this point.

To compute the graph homologies directly, we have to enumerate certain types of graphs.
However, according to the number of vertices increases, the difficulty of the problem of deciding 
whether two given graphs are isomorphic to each other grows very rapidly.
In view of this, we adopted a method in a pure framework of symplectic representation theory.
This has a disadvantage that the dimensions which we have to compute are considerably larger than the
graph theoretical method, because there is no effective way to distinguish between connected
and disconnected graphs in the framework of representation theory. 
In order to overcome this difficulty, we made various devices to lighten the 
burden imposed on computers. More precisely, our task is to determine the
dimensions of the $\mathrm{Sp}$-invariant subspaces of various $\mathrm{Sp}$-modules.
Theoretically there is no problem here because we know the characters of these modules
completely. However, the problem lies in the huge size of the data as well as the time
which computers need. We have developed several our own programs on the 
computer software Mathematica which realize theoretical considerations in the
representation theory. See Section $4$ for details.
We have also made an extensive use of the computer program LiE to obtain
irreducible decompositions of various $\mathrm{Sp}$-modules.
%Our computer computations were made on our own computers as well as on
%the Large-Scale Servers at Akita University.

{\it Acknowledgement.} The authors would like 
to thank Christophe Soul\'e for helpful information
about the (non)vanishing of the Borel regulator classes. 
We also thank James Conant 
and Alastair Hamilton for informing the authors about their works \cite{cgv} and \cite{h}. 
Thanks are also due to a referee for pointing out that
Proposition \ref{prop:ce} is well-known.
The first named author would like to thank Richard Hain and Hiroaki Nakamura 
for enlightening discussions about the symplectic representation theory
related to the mapping class group
in the $1990$'s. 
The authors were partially supported by KAKENHI (No.~24740040 and 
No.~24740035), 
%Ministry of Education, Science, Sports and Technology, 
Japan Society for the Promotion of Science, 
Japan.

\section{Preliminaries}\label{sec:p}

In this section, we prepare a few facts about the (co)homology of graded Lie algebras
which will be needed in our later considerations.

Let $\mathfrak{g}=\oplus_{k=0}^{\infty}\mathfrak{g}(k)$ be a graded Lie algebra over $\Q$ and let
$\mathfrak{g}^+=\oplus_{k=1}^{\infty}\mathfrak{g}(k)$ be its ideal consisting of
all the elements of $\mathfrak{g}$ with {\it positive} gradings.
We assume that each piece $\mathfrak{g}(k)$ is finite dimensional for all $k$.
Then the chain complex $C_*(\mathfrak{g})$ of $\mathfrak{g}$
splits into the direct sum 
$$
C_*(\mathfrak{g})=\bigoplus_{w=0}^\infty C_*^{(w)}(\mathfrak{g})
$$
of {\it finite dimensional} subcomplexes 
$C_*^{(w)}(\mathfrak{g})=\oplus_{i=0}^w C_i^{(w)}(\mathfrak{g})$ where 
$$
C_i^{(w)}(\mathfrak{g})=
\bigoplus_{\begin{subarray}{c}
i_0+ i_1+\cdots + i_w=i\\ 
i_1+2 i_2+\cdots +w i_w=w
\end{subarray}}
\wedge^{i_0} (\mathfrak{g}(0))\otimes\wedge^{i_1} (\mathfrak{g}(1))\otimes\cdots\otimes
\wedge^{i_w} (\mathfrak{g}(w))
$$
so that $C_i^{(w)}(\mathfrak{g})=0$ for $i> w+\frac{1}{2} d(d-1)\ (d=\mathrm{dim}\, \mathfrak{g}(0))$.
This induces a bigraded structure on the homology group $H_*(\mathfrak{g})$ described as
$$
H_i(\mathfrak{g})=\bigoplus_{w=0}^\infty H_i(\mathfrak{g})_{w}
$$
where $H_i(\mathfrak{g})_{w}=H_i(C_*^{(w)}(\mathfrak{g}))$.
We call $H_i(\mathfrak{g})_{w}$ the {\it weight} $w$-part of $H_i(\mathfrak{g})$.
Let $\widehat{\mathfrak{g}}$ be the completion of $\mathfrak{g}$ with respect to the
grading and let $H^*_c(\widehat{\mathfrak{g}})$ be the {\it continuous} cohomology.
Then we have
$$
H^k_c(\widehat{\mathfrak{g}})\cong \bigoplus_{w=0}^\infty\ \left(H_k(\mathfrak{g})_w\right)^*.
$$

Now suppose that $\mathfrak{g}$ is an $\mathrm{Sp}$-graded Lie algebra by which
we mean that each piece $\mathfrak{g}(k)$ is a finite dimensional representation
of $\mathrm{Sp}(2g,\Q)$ for some fixed $g$ such that the bracket operation 
$\mathfrak{g}(i)\otimes\mathfrak{g}(j)\rightarrow\mathfrak{g}(i+j)$ is a morphism
of $\mathrm{Sp}$-modules for any $i,j$. We further assume that 
$\mathfrak{g}(0)=\mathfrak{sp}(2g,\Q)\cong S^2H_\Q$. Then we have a split extension
$$
0\rightarrow \mathfrak{g}^+\rightarrow \mathfrak{g}\rightarrow\mathfrak{sp}(2g,\Q)\rightarrow 0
$$
of Lie algebras. The $E^2$-term of the Hochschild-Serre spectral sequence for the homology 
of $\mathfrak{g}$ is given by
$$
E^2_{p,q}=H_p(\mathfrak{sp}(2g,\Q);H_q(\mathfrak{g}^+)).
$$
By the assumption, the chain complex $C_*(\mathfrak{g}^+)$ decomposes into
the direct sum of subcomplexes corresponding to $\mathrm{Sp}$-irreducible 
components. It follows that the homology group $H_q(\mathfrak{g}^+)$ also
decomposes into the $\mathrm{Sp}$-irreducible components.
In particular, we have the $\mathrm{Sp}$-invariant part which we denote by
$H_q(\mathfrak{g}^+)^{\mathrm{Sp}}$. 
Here we can apply the well-known
vanishing theorem (see Chevalley and Eilenberg \cite{ce}) to conclude that
$H_p(\mathfrak{sp}(2g,\Q);H_q(\mathfrak{g}^+)_{\lambda})=0$
for any $p \ge 0$ and for any $\mathrm{Sp}$-irreducible component
$\lambda$ different from the trivial representation.
It follows that the spectral sequence collapses
at the $E^2$-term and we have an isomorphism
$$
H_*(\mathfrak{g})\cong H_*(\mathfrak{sp}(2g;\Q))\otimes H_*(\mathfrak{g}^+)^{\mathrm{Sp}}.
$$
This argument can be applied in the case of three Lie algebras treated in this paper.
More precisely, we set $\mathfrak{g}_g$ to be one of $\mathfrak{c}_g, \mathfrak{h}_{g,1}$ or 
$\mathfrak{a}_g$. Then we have natural embeddings
$\mathfrak{g}_g\subset \mathfrak{g}_{g+1}$ so that we can consider the union
(or equivalently the direct limit)
$\mathfrak{g}_\infty=\lim_{g\to\infty}\mathfrak{g}_g$. Also we have its ideal
$\mathfrak{g}^+_\infty$. The homology groups of them are given by
$$
H_*(\mathfrak{g}_\infty)=\lim_{g\to\infty} H_*(\mathfrak{g}_g),\quad
H_*(\mathfrak{g}^+_\infty)=\lim_{g\to\infty} H_*(\mathfrak{g}^+_g).
$$
Since it is well known that the $\mathrm{Sp}$-irreducible decompositions
of $\mathfrak{g}_g(k)$ stabilizes as $g$ goes to $\infty$, we can apply the preceding argument
to conclude that
$$
H_*(\mathfrak{g}_\infty)\cong H_*(\mathfrak{sp}(\infty,\Q))\otimes H_*(\mathfrak{g}^+_\infty)^{\mathrm{Sp}},\quad H_*(\mathfrak{g}^+_\infty)^{\mathrm{Sp}}=
\bigoplus_{w=1}^\infty H_*(\mathfrak{g}^+_\infty)^{\mathrm{Sp}}_w.
$$
Similar formulas are also valid for the continuous cohomology,
although we have to be careful here because the projective limit
arises rather than the direct limit.

Since $H_*(\mathfrak{g}^+_\infty)^{\mathrm{Sp}}_w$ is finite dimensional by the assumption,
we have its Euler characteristic $\chi(H_*(\mathfrak{g}^+_\infty)^{\mathrm{Sp}}_w)$.
We call
$$
g(t)=1+\sum_{w=1}^\infty \chi(H_*(\mathfrak{g}^+_\infty)^{\mathrm{Sp}}_w) \ t^w
\in \Z[[t]]
$$
the {\it weight} generating function for the $\mathrm{Sp}$-invariant stable homology
group of the $\mathrm{Sp}$-Lie algebra $\mathfrak{g}^+$. Observe here that
if we replace $\mathfrak{g}^+$ with $\mathfrak{g}$ here, then we obtain the trivial
function $1$ because $\chi(H_*(\mathfrak{sp}(2g,\Q)))=0$.

\begin{remark}
We mention that such a kind of generating function
was first considered by Perchik \cite{p} in the context of the 
{\it unstable} Gelfand-Fuks cohomology
of the Lie algebra of formal Hamiltonian vector fields
($\mathfrak{ham}_{2g}$ in the notation of \cite{kontsevich3} and Section $5$ below)
and later in \cite{km} in the context of the
{\it stable} Gelfand-Fuks cohomology of 
$\lim_{g\to\infty}\mathfrak{ham}_{2g}^0=\widehat{\mathfrak{c}}_{\infty}\otimes\R$.
\end{remark}

In the case where $\mathfrak{g}_g$ is one of the three Lie algebras considered in
this paper, we can consider the {\it weight} generating function for the 
$\mathrm{Sp}$-invariant homology group of the
limit Lie algebra $\mathfrak{g}^+_\infty$. 

For later use, we generalize the definition of the {\it weight} generating function
in a broader context as follows.

Let 
$$
\mathcal{K}=\bigoplus_{d,w=0}^\infty  \mathcal{K}_{d,w}
$$
be a bigraded algebra over $\Q$ such that the multiplication 
$$
\mathcal{K}_{d,w}\otimes \mathcal{K}_{d',w'}\longrightarrow \mathcal{K}_{d+d',w+w'}
$$
is graded commutative with respect to $d$ (called the degree) and the weight $w$
part
$$
\mathcal{K}_w=\bigoplus_d \mathcal{K}_{d,w} 
$$
is finite dimensional for any $w$. We define the Euler characteristic 
$\chi(\mathcal{K}_w)$ by
$$
\chi(\mathcal{K}_w)=\sum_{d} (-1)^d \mathrm{dim}\ \mathcal{K}_{d,w}.
$$
We also assume that $\mathcal{K}_{0,0}=\Q$. Hereafter we always assume
the above conditions 
whenever we mention 
weight generating functions of bigraded algebras.
It is easy to see that if there are given two bigraded algebras
$\mathcal{K},\mathcal{K}'$ which satisfy the above conditions, then
the tensor product $\mathcal{K}\otimes\mathcal{K}'$ also
satisfies them with respect to the induced
bigradings on it.

\begin{definition}
We define the 
{\it weight} generating function $k(t)$ for a bigraded algebra $\mathcal{K}$ as above
to be
$$
k(t)=1+\sum_{w=1}^\infty \chi(\mathcal{K}_w)\ t^w.
$$
\end{definition}

\begin{example}
Let $P=\Q[x_1,x_2,\ldots]$ be the polynomial algebra on given variables
$x_i$ with degree $2d_i$. If we set the weight
to be equal to the degree, then the weight generating function for $P$ is given by
$$
p(t)=\prod_{i} (1+t^{2d_i}+t^{4d_i}+\cdots)
=\prod_{i}(1-t^{2d_i})^{-1}.
$$
Let $E=\wedge^*[y_1,y_2,\ldots]$ be the exterior algebra on given variables $y_i$ with degree
$2s_i-1$. 
If we set the weight to be equal to the degree, then the weight generating function for $E$
is given by
$$
e(t)=\prod_i (1-t^{2s_i-1}).
$$
\label{ex:w}
\end{example}

\begin{prop}
Let $\mathcal{K}$ be a bigraded algebra and assume that it is
a free graded commutative algebra with respect to the grading by degrees.
Let $P\mathcal{K}$ be the subspace consisting of primitive elements.
For each $d, w$, set $P\mathcal{K}_{d,w}=P\mathcal{K}\cap \mathcal{K}_{d,w}$
and define
$$
\chi^{\mathrm{pr}}_w(\mathcal{K})=\sum_{d} (-1)^d \mathrm{dim}\ P\mathcal{K}_{d,w}.
$$
Then we have
$$
k(t)=\prod_{w=1}^\infty (1-t^w)^{-\chi^{\mathrm{pr}}_w(\mathcal{K})}.
$$
\label{prop:wp}
\end{prop}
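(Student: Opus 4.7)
The plan is to reduce the statement to a weighted count on a bihomogeneous basis of primitives, using the freeness hypothesis. First I would invoke the structure theorem for free graded commutative algebras over $\Q$: one has a canonical isomorphism
$$
\mathcal{K} \;\cong\; S\bigl(P\mathcal{K}^{\mathrm{ev}}\bigr) \otimes \wedge\bigl(P\mathcal{K}^{\mathrm{od}}\bigr),
$$
where $P\mathcal{K}^{\mathrm{ev}}$ and $P\mathcal{K}^{\mathrm{od}}$ are the even- and odd-degree parts of the primitive subspace (equivalently, a choice of generating subspace, which coincides modulo decomposables with the primitives for the canonical Hopf coproduct that declares the generators primitive). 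Because this coproduct preserves both gradings, $P\mathcal{K}$ is itself bigraded, and I can fix a bihomogeneous basis $\{z_j\}$ with $\deg z_j = d_j$ and weight $w_j$. Under the identification above $\mathcal{K}$ becomes the tensor product, over $j$, of one-generator factors $\Q[z_j]$ (when $d_j$ is even) or $\Q[z_j]/(z_j^2)$ (when $d_j$ is odd).

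Second, I would record the multiplicativity of the weight generating function under tensor products. For bigraded algebras $\mathcal{K}',\mathcal{K}''$ satisfying the standing finiteness hypotheses, the K\"unneth-type identity
$$
(\mathcal{K}'\otimes\mathcal{K}'')_w \;=\; \bigoplus_{w'+w''=w} \mathcal{K}'_{w'} \otimes \mathcal{K}''_{w''},
$$
combined with the multiplicativity of the signs $(-1)^{d'+d''}$ in the definition of $\chi$, gives $k(t) = k'(t)\,k''(t)$ as formal power series. The infinite product over $j$ is legitimate: since $K_{0,0}=\Q$ and each $\mathcal{K}_w$ is finite dimensional, only finitely many $z_j$ have weight $\leq w$ for any fixed $w$, so the coefficient of $t^w$ in the product is always a finite sum.

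Third, I would compute the single-generator contributions directly. For $d_j$ even the polynomial factor $\Q[z_j]$ contributes
$$
\sum_{k\geq 0}(-1)^{k d_j}\, t^{k w_j} \;=\; (1-t^{w_j})^{-1},
$$
and for $d_j$ odd the exterior factor $\Q[z_j]/(z_j^2)$ contributes $1 - t^{w_j}$. Multiplying over $j$ yields
$$
k(t) \;=\; \prod_{j:\,d_j\ \text{even}}(1-t^{w_j})^{-1}\ \cdot \prod_{j:\,d_j\ \text{odd}}(1-t^{w_j}),
$$
and collecting factors of the same $w_j=w$ produces the exponent
$$
\#\{j : d_j\ \text{even},\ w_j = w\} \;-\; \#\{j : d_j\ \text{odd},\ w_j = w\} \;=\; \chi^{\mathrm{pr}}_w(\mathcal{K}),
$$
which is precisely the claimed formula.

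The one genuine subtlety, rather than an obstacle, is justifying that the structure theorem can be applied compatibly with the weight grading so that $P\mathcal{K}$ admits a bihomogeneous basis and generates $\mathcal{K}$ freely as a bigraded algebra; this is automatic once one interprets $P\mathcal{K}$ via the canonical coproduct (for which every chosen homogeneous generator is primitive), since that coproduct is bigraded by construction, so its kernel—and hence the space of indecomposables dual to it—inherits the bigrading.
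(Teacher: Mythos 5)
Your proof is correct and follows essentially the same route as the paper: decompose the free bigraded algebra into single-generator tensor factors, use multiplicativity of the weight generating function under tensor products, and compute the polynomial/exterior factors as $(1-t^w)^{\mp1}$ (the paper's Example \ref{ex:w}), so that only the signed count $\chi^{\mathrm{pr}}_w(\mathcal{K})$ survives in the exponent. Your explicit attention to the bigraded structure of $P\mathcal{K}$ is a reasonable elaboration of a point the paper leaves implicit.
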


\begin{proof}
It is easy to see that the weight generating function of the tensor product of two
graded commutative bialgebras is the product of those of each bigraded algebra.
Since $\mathcal{K}$ is free by the assumption, it is the tensor product of 
subalgebras generated by $P\mathcal{K}$. Consider the product of weight generating functions 
of two graded commutative algebras each of which is generated by an element whose 
weight is the same whereas the
degree is complementary, namely one is even and the other is odd.
Then by Example \ref{ex:w}, we see that this product is the constant function $1$.
Hence the weight generating function of $\mathcal{K}$ depends only on 
$\chi^{\mathrm{pr}}_w(\mathcal{K})$ and the claim follows.
\end{proof}

\begin{remark}
The number $\chi^{\mathrm{pr}}_w(\mathcal{K})$
can be interpreted as the Euler characteristic of the space of primitive
elements of weight $w$, or equivalently as the number of
new generators of weight $w$ with {\it even} degrees minus that of 
new generators of weight $w$ with {\it odd} degrees.
\end{remark}

\begin{prop}
Let $\mathcal{K}$ be a bigraded algebra and assume that it is
a free graded commutative algebra with respect to the grading by degrees.
Assume that the weight generating function $k(t)$ is determined up to
weight $w_0$. Then we can
determine the numbers 
$\chi^{\mathrm{pr}}_w(\mathcal{K})\ (w=1,2,\ldots,w_0)$
inductively by the following recursive formula
$$
\chi^{\mathrm{pr}}_w(\mathcal{K})=[k(t)]_{t^w}-
\left[\prod_{i=1}^{w-1} (1-t^i)^{-\chi^{\mathrm{pr}}_{i}(\mathcal{K})}\right]_{t^w}
\quad (w=1,2,\ldots,w_0)
$$
where $[f(t)]_{t^w}$ denotes the coefficient of $t^w$ in a given formal power
series $f(t)\in \Z[[t]]$.
\label{prop:wc}
\end{prop}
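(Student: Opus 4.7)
The plan is to derive the recursion directly from the product formula established in Proposition~\ref{prop:wp}, and then observe that the recursion inductively determines the numbers $\chi^{\mathrm{pr}}_w(\mathcal{K})$ from the coefficients of $k(t)$. By Proposition~\ref{prop:wp} we may start from the identity
$$
k(t)=\prod_{i=1}^\infty (1-t^i)^{-\chi^{\mathrm{pr}}_{i}(\mathcal{K})}
$$
in $\Z[[t]]$, which makes sense as an infinite product because each factor has constant term $1$ and the factor indexed by $i$ contributes only to coefficients of $t^{ki}$ for $k\geq 1$.

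First I would note the truncation principle: for any $w\geq 1$, every factor with $i>w$ is of the form $1+O(t^{i})\subset 1+O(t^{w+1})$, so removing such factors does not change the coefficient of $t^w$. Hence
$$
[k(t)]_{t^w}=\left[\prod_{i=1}^{w}(1-t^i)^{-\chi^{\mathrm{pr}}_{i}(\mathcal{K})}\right]_{t^w}.
$$
Next I would split off the factor with $i=w$, using the expansion
$$
(1-t^w)^{-\chi^{\mathrm{pr}}_{w}(\mathcal{K})}=1+\chi^{\mathrm{pr}}_{w}(\mathcal{K})\,t^w+O(t^{2w}).
$$
Setting $Q_{w-1}(t)=\prod_{i=1}^{w-1}(1-t^i)^{-\chi^{\mathrm{pr}}_{i}(\mathcal{K})}$, which has constant term $1$, we compute the $t^w$-coefficient of $Q_{w-1}(t)\cdot(1-t^w)^{-\chi^{\mathrm{pr}}_{w}(\mathcal{K})}$ to obtain
$$
[k(t)]_{t^w}=[Q_{w-1}(t)]_{t^w}+\chi^{\mathrm{pr}}_{w}(\mathcal{K}),
$$
since the only ways to form $t^w$ are (i) $t^w$ from $Q_{w-1}(t)$ times $1$ and (ii) $1$ from $Q_{w-1}(t)$ times $\chi^{\mathrm{pr}}_{w}(\mathcal{K})\,t^w$. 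Rearranging yields the stated formula.

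Finally, I would argue by induction on $w$ that this recursion determines the $\chi^{\mathrm{pr}}_{w}(\mathcal{K})$ uniquely in terms of the coefficients $[k(t)]_{t^w}$ for $w\leq w_0$: if $\chi^{\mathrm{pr}}_{i}(\mathcal{K})$ is known for $i<w$, then $Q_{w-1}(t)$ is explicitly computable and the identity above gives $\chi^{\mathrm{pr}}_{w}(\mathcal{K})$. The base case $w=1$ is just $\chi^{\mathrm{pr}}_{1}(\mathcal{K})=[k(t)]_{t}$ (with $Q_0(t)=1$). There is no real obstacle here; the only point worth checking carefully is the $t$-adic truncation argument which ensures that the infinite product may be replaced by a finite one when extracting the coefficient of $t^w$.
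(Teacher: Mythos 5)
Your proof is correct and follows the same route as the paper, which simply states that the proposition follows from Proposition~\ref{prop:wp}; you have merely filled in the routine coefficient-extraction details (truncating the infinite product modulo $t^{w+1}$ and splitting off the $i=w$ factor), and these details are accurate.
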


\begin{proof}
This follows from Proposition \ref{prop:wp}.
\end{proof}

\section{Computation of the irreducible decompositions}\label{sec:irrep}

In this section we describe our explicit determination of 
the stable irreducible decompositions of 
$\mathfrak{h}_{g,1}(k), \mathfrak{a}_g(k)$ up to certain degrees. 
At present, we have determined them for all $k\leq 20$.
As already mentioned
in the introduction, in the commutative case,  we have an isomorphism
$\mathfrak{c}_g(k)\cong S^{k+2} H_\Q$ which is known to be an irreducible representation
for any $k$.

To describe our result, we fix our notations.
Any Young diagram $\la=[\la_1\cdots\la_h]$ with
$k$ boxes defines an irreducible representation 
of the symmetric group $\mathfrak{S}_k$ which we denote
by $\la_{\mathfrak{S}_k}$ or sometimes simply by the same symbol $\la$. 
Thus $[k]$ corresponds to the trivial representation
and $[1^k]$ the alternating representation. Here we use
a simplified notation to express Young diagrams. For example
$[422]$ will be denoted by $[42^2]$.
For any Young diagram $\la=[\la_1\cdots\la_h]$ as above and for any
$n\geq h$, let 
$
\la_{\mathrm{GL}}
$
be the corresponding irreducible representation of $\mathrm{GL}(n,\Q)$.
Similarly for any $g\geq h$, let 
$
\la_{\mathrm{Sp}}
$
be the corresponding irreducible representation of $\mathrm{Sp}(2g,\Q)$.

Our method of computing the stable irreducible decompositions for the 
Lie case $\mathfrak{h}_{g,1}$ as well as the associative case
$\mathfrak{a}_g$ can be described as follows.
For the former case, we use the following result of Kontsevich. 

\begin{thm}[Kontsevich \cite{kontsevich1}\cite{kontsevich2}]\label{thm:k}
Let $W_k$ be the $\mathfrak{S}_{k+2}$-module with character
$\chi_k(1^{k+2})=k!, \chi_k(1^1a^b)=(b-1)! a^{b-1} \mu(a),
\chi_k(a^b)=-(b-1)! a^{b-1} \mu(a)$, and $\chi_k$ vanishes
on all the other
conjugacy classes, where $\mu$ denotes the M\"obius
function. Then there exists an isomorphism
$$
\mathfrak{h}_{g,1}(k)\cong H_\Q^{\otimes (k+2)}\otimes_{A_{k+2}} W_k
$$
of\/ $\mathrm{Sp}(2g,\Q)$-modules, where $A_{k+2}=\Q\mathfrak{S}_{k+2}$
denotes the group algebra of $\mathfrak{S}_{k+2}$.
\end{thm}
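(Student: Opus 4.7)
The plan is to reduce the theorem to an intrinsic description of $\mathfrak{h}_{g,1}(k)$ in terms of the free Lie algebra $\mathcal{L}_{H_\Q}$, recast the answer using the cyclic Lie operad, and finally match $\mathfrak{S}_{k+2}$-characters with that of $W_k$.

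First, any derivation of $\mathcal{L}_{H_\Q}$ is determined by its restriction to the generating space $H_\Q$, so the space of degree-$k$ derivations is $\Hom(H_\Q,\mathcal{L}_{H_\Q}(k+1))\cong H_\Q\otimes \mathcal{L}_{H_\Q}(k+1)$ via the symplectic form. I would check that the condition $D(\omega)=0$, with $\omega=\sum_i[a_i,b_i]$, corresponds under the natural embedding $H_\Q\otimes \mathcal{L}_{H_\Q}(k+1)\hookrightarrow H_\Q^{\otimes(k+2)}$ to invariance under cyclic rotation, giving a canonical $\mathrm{Sp}$-equivariant isomorphism
$$
\mathfrak{h}_{g,1}(k)\cong \mathcal{L}^{\mathrm{cyc}}_{k+2}(H_\Q)
$$
between $\mathfrak{h}_{g,1}(k)$ and the space of cyclic Lie words of length $k+2$ on $H_\Q$, with the inverse sending a cyclic Lie word to the associated Hamiltonian vector field.

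Next, I would recast the right-hand side using the Schur functor formalism. For any finite dimensional space $V$ and any $n$, there is a natural isomorphism $\mathcal{L}^{\mathrm{cyc}}_n(V)\cong V^{\otimes n}\otimes_{\mathfrak{S}_n}\mathrm{Lie}((n))$, where $\mathrm{Lie}((n))$ denotes the arity-$n$ component of the cyclic Lie operad, a module of dimension $(n-2)!$ whose restriction along $\mathfrak{S}_{n-1}\hookrightarrow\mathfrak{S}_n$ (fixing the marked leaf) is the ordinary Lie operad component $\mathrm{Lie}(n-1)$. Applying this with $V=H_\Q$ and $n=k+2$ reduces the theorem to the purely combinatorial assertion that $\mathrm{Lie}((k+2))\cong W_k$ as $\mathfrak{S}_{k+2}$-modules, a statement whose dimensions agree automatically since $\dim W_k=\chi_k(1^{k+2})=k!$.

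The core step, where I expect most of the work to lie, is then to verify that the character of $\mathrm{Lie}((n))$ on each conjugacy class of $\mathfrak{S}_n$ equals the value prescribed for $W_{n-2}$. Cycle types with exactly one fixed point fall directly out of the restriction formula combined with Klyachko's classical result
$$
\mathrm{ch}(\mathrm{Lie}(n-1))=\frac{1}{n-1}\sum_{d\mid n-1}\mu(d)\,p_d^{(n-1)/d},
$$
which gives $\chi_{\mathrm{Lie}(n-1)}(a^b)=(b-1)!\,a^{b-1}\mu(a)$ for cycle types $a^b$ in $\mathfrak{S}_{n-1}$ with $ab=n-1$ and zero on mixed types. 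For fixed-point-free cycle types $a^b$ with $ab=n$, I would compute the trace by letting the cyclic rotation $(1\,2\,\cdots\,n)\in\mathfrak{S}_n$ act via its explicit description in the cyclic operad and averaging over the centralizer by Möbius inversion, aiming at the sign-flipped value $-(b-1)!\,a^{b-1}\mu(a)$. The hard step will be this last computation and, in particular, the appearance of the minus sign: it must be traced carefully to the antisymmetry of the Lie bracket, which picks up a $(-1)$ when the outermost bracket is dragged across the cyclic closure of the word. I expect this sign to reduce, after Möbius inversion over divisors of $n$, to the classical identity $\sum_{d\mid n}\mu(d)=[n=1]$; vanishing on all other cycle types then follows from the Klyachko vanishing. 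Once the character of $\mathrm{Lie}((k+2))$ is verified to coincide with that of $W_k$, functoriality of the Schur functor upgrades this to the claimed $\mathrm{Sp}(2g,\Q)$-module isomorphism.
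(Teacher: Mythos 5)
First, a point of reference: the paper itself contains no proof of this statement. Theorem \ref{thm:k} is quoted from Kontsevich, and the authors explicitly defer to \cite{mss2} (see also \cite{es}) for ``a simple proof using only the standard representation theory''; so your sketch has to stand on its own. Its architecture is sound, and the facts you are aiming at are true: $\mathfrak{h}_{g,1}(k)$ is indeed the kernel of the bracket map on $H_\Q\otimes\mathcal{L}_{H_\Q}(k+1)$, equivalently the ``cyclic Lie'' Schur functor $H_\Q^{\otimes(k+2)}\otimes_{A_{k+2}}\mathrm{Lie}((k+2))$, the stated character is that of $\mathrm{Lie}((k+2))$, and your combination of Klyachko's formula with the restriction $\mathrm{Lie}((k+2))|_{\mathfrak{S}_{k+1}}\cong\mathrm{Lie}(k+1)$ correctly handles every conjugacy class possessing a fixed point, including all the vanishing statements there.

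The genuine gap is exactly where you locate the ``hard step'': the fixed-point-free classes. Your plan describes a mechanism only for the long cycle $(1\,2\cdots n)$; for a product of $b$ disjoint $a$-cycles with $b>1$, ``averaging over the centralizer by M\"obius inversion'' is not a defined computation (a character value is the trace of one representative, nothing is averaged), and your closing claim that ``vanishing on all other cycle types then follows from the Klyachko vanishing'' is unjustified precisely for mixed fixed-point-free types (e.g.\ type $[24]$ in $\mathfrak{S}_6$): restriction to $\mathfrak{S}_{k+1}$ gives no information whatsoever about fixed-point-free classes, so those vanishings require their own argument. A clean way to close the gap, which avoids any trace computation inside the cyclic operad and is presumably what the authors' representation-theoretic proof amounts to, is to use your own first step: the bracket map $H_\Q\otimes\mathcal{L}_{H_\Q}(k+1)\to\mathcal{L}_{H_\Q}(k+2)$ is surjective with kernel $\mathfrak{h}_{g,1}(k)$ (after identifying $H_\Q^*\cong H_\Q$ by $\omega$), and it is induced by a surjection of $\mathfrak{S}_{k+2}$-modules $\mathrm{Ind}_{\mathfrak{S}_{k+1}}^{\mathfrak{S}_{k+2}}\mathrm{Lie}(k+1)\to\mathrm{Lie}(k+2)$; hence $W_k\cong\mathrm{Ind}_{\mathfrak{S}_{k+1}}^{\mathfrak{S}_{k+2}}\mathrm{Lie}(k+1)\ominus\mathrm{Lie}(k+2)$ as an honest kernel. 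The fixed-point formula for induced characters annihilates the first term on every fixed-point-free class, and Klyachko applied to $\mathrm{Lie}(k+2)$ then yields exactly $-(b-1)!\,a^{b-1}\mu(a)$ on pure type $a^b$ and $0$ on mixed types, while on classes with fixed points the same two formulas reproduce the remaining values (the dimension $k!$ arising as $(k+2)\cdot k!-(k+1)!$). Exactness of $V\mapsto V^{\otimes(k+2)}\otimes_{A_{k+2}}(-)$ over $\Q$ then gives the Sp-isomorphism for every $g$; in particular your minus sign is nothing more mysterious than the subtracted cokernel $\mathcal{L}_{H_\Q}(k+2)$.
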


We have given in \cite{mss2} a simple proof of this result using only the standard representation theory (see also \cite{es}). 
As an immediate corollary to this theorem, we obtain the following.

\begin{cor}\label{cor:h}
Let $\la=[\la_1\cdots\la_{h}]$ be a Young diagram
with $(k+2)$ boxes and let $\la_{\mathfrak{S}_{k+2}}$ be the corresponding
irreducible representation of the symmetric group
$\mathfrak{S}_{k+2}$. 
Then the multiplicity $m_\la$ of
$V_\la=H_\Q^{\otimes (k+2)}\otimes_{A_{k+2}}\la_{\mathfrak{S}_{k+2}}$ in $\mathfrak{h}_{g,1}(k)$ is expressed as
$$
m_\la=\frac{1}{(k+2)!} \sum_{\gamma\in \mathfrak{S}_{k+2}}
\chi_k (\gamma)\chi_\la(\gamma)
$$
where $\chi_\la$ denotes the character of $\la_{\mathfrak{S}_{k+2}}$.
\end{cor}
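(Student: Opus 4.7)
The plan is to obtain the formula as an immediate consequence of Theorem \ref{thm:k} by decomposing $W_k$ into irreducible $\mathfrak{S}_{k+2}$-representations and transporting that decomposition through the tensor-product functor $\Hq^{\otimes(k+2)}\otimes_{A_{k+2}}(-)$. Nothing deep is needed beyond semisimplicity of $\Q\mathfrak{S}_{k+2}$ and the additivity of this functor.

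First, because $\Q\mathfrak{S}_{k+2}$ is semisimple, the module $W_k$ decomposes as
$$ W_k \cong \bigoplus_{\lambda} n_\lambda \, \lambda_{\mathfrak{S}_{k+2}}, $$
where $\lambda$ runs over Young diagrams with $(k+2)$ boxes, and the multiplicity $n_\lambda$ is computed by the standard character inner product
$$ n_\lambda = \langle \chi_k,\chi_\lambda \rangle_{\mathfrak{S}_{k+2}} = \frac{1}{(k+2)!}\sum_{\gamma\in\mathfrak{S}_{k+2}} \chi_k(\gamma)\,\chi_\lambda(\gamma). $$

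Second, I would view $\Hq^{\otimes(k+2)}$ as a right $A_{k+2}$-module by permutation of tensor factors, which commutes with the $\mathrm{Sp}$-action; the functor $\Hq^{\otimes(k+2)}\otimes_{A_{k+2}}(-)$ from left $A_{k+2}$-modules to $\mathrm{Sp}$-modules is then additive. Applying it to the decomposition of $W_k$ above and invoking Theorem \ref{thm:k} yields
$$ \hg(k) \cong \Hq^{\otimes(k+2)}\otimes_{A_{k+2}} W_k \cong \bigoplus_{\lambda} n_\lambda\, V_\lambda $$
as $\mathrm{Sp}$-modules, so that $m_\lambda = n_\lambda$, which is exactly the stated formula.

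There is essentially no technical obstacle, since Theorem \ref{thm:k} is assumed and the rest is the character-theoretic projection formula. The only point worth flagging is that the $V_\lambda$ are not in general irreducible as $\mathrm{Sp}$-modules---the commutant of the $\mathfrak{S}_{k+2}$-action on $\Hq^{\otimes(k+2)}$ is the Brauer algebra rather than the image of $\mathrm{Sp}$ alone---so the stated formula records the multiplicity of $V_\lambda$ in this Young-diagram-indexed decomposition, not in a decomposition into $\mathrm{Sp}$-irreducibles. Passing to the latter requires a further Littlewood/Brauer branching step, which is separate from, and beyond, the content of the corollary.
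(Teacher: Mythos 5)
Your proof is correct and follows essentially the same route the paper intends: the corollary is immediate from Theorem \ref{thm:k} by decomposing $W_k$ into $\mathfrak{S}_{k+2}$-irreducibles via the standard character inner product and using additivity of the functor $H_\Q^{\otimes(k+2)}\otimes_{A_{k+2}}(-)$. Your closing remark that the $V_\la$ need not be $\mathrm{Sp}$-irreducible, so that the Sp-irreducible decomposition requires a further branching step, is also accurate and consistent with how the paper proceeds afterwards.
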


For the case of $\mathfrak{a}_g(k)$, we have the following result.

\begin{prop}\label{prop:a}
Let $\la=[\la_1\cdots\la_{h}]$ be a Young diagram
with $(k+2)$ boxes and let $\la_{\mathfrak{S}_{k+2}}$ be the corresponding
irreducible representation of the symmetric group
$\mathfrak{S}_{k+2}$. 
Then the multiplicity $n_\la$ of
$V_\la=H_\Q^{\otimes (k+2)}\otimes_{A_{k+2}}\la_{\mathfrak{S}_{k+2}}$ in $\mathfrak{a}_{g}(k)$ is expressed as
$$
n_\la=\frac{1}{k+2} \sum_{i=1}^{k+2}
\chi_\la(\sigma_{k+2}^i)
$$
where $\sigma_{k+2}\in\mathfrak{S}_{k+2}$ denotes the cyclic permutation
$(12\cdots (k+2))$ of order $k+2$.
\end{prop}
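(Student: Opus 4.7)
The plan is to realize $\ag(k)$ in the form $\Hq^{\otimes(k+2)} \otimes_{A_{k+2}} W'_k$ for an explicit right $\mathfrak{S}_{k+2}$-module $W'_k$, and then extract $n_\la$ via exactly the stable Schur--Weyl argument that proves Corollary \ref{cor:h}. The essential input here is Kontsevich's noncommutative symplectic geometry: a symplectic derivation of $T_0\Hq$ is determined by a cyclic-word Hamiltonian, so the natural map from the commutator quotient $T_0\Hq/[T_0\Hq,T_0\Hq]$ to $\ag$ is a Lie algebra isomorphism. In the homogeneous piece corresponding to internal length $k+2$ this gives the identification
\[
\ag(k) \;\cong\; \Hq^{\otimes(k+2)} \otimes_{\Q[C_{k+2}]} \Q,
\]
where $C_{k+2}\subset\mathfrak{S}_{k+2}$ is the cyclic subgroup generated by $\sigma_{k+2}$ acting on the tensor factors and $\Q$ is the trivial $C_{k+2}$-module.

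Second, I would rewrite these coinvariants by passing to an induced representation: associativity of the tensor product gives
\[
\Hq^{\otimes(k+2)} \otimes_{\Q[C_{k+2}]} \Q \;\cong\; \Hq^{\otimes(k+2)} \otimes_{A_{k+2}} W'_k,
\qquad W'_k := \mathrm{Ind}^{\mathfrak{S}_{k+2}}_{C_{k+2}} \Q.
\]
This puts $\ag(k)$ in exactly the shape of the right-hand side of Theorem \ref{thm:k}, but with $W_k$ replaced by $W'_k$. The stable Schur--Weyl correspondence invoked in the proof of Corollary \ref{cor:h}, which holds once $g$ is sufficiently large relative to $k$, then yields
\[
n_\la \;=\; \dim_\Q \mathrm{Hom}_{\mathfrak{S}_{k+2}}\bigl(\la_{\mathfrak{S}_{k+2}},\, W'_k\bigr).
\]

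Finally, Frobenius reciprocity transports the hom space downstairs to $C_{k+2}$,
\[
\mathrm{Hom}_{\mathfrak{S}_{k+2}}\bigl(\la_{\mathfrak{S}_{k+2}},\, \mathrm{Ind}^{\mathfrak{S}_{k+2}}_{C_{k+2}} \Q\bigr) \;\cong\; (\la_{\mathfrak{S}_{k+2}})^{C_{k+2}},
\]
and averaging the character of $\la_{\mathfrak{S}_{k+2}}$ over the cyclic subgroup produces
\[
n_\la \;=\; \frac{1}{k+2}\sum_{i=1}^{k+2} \chi_\la(\sigma_{k+2}^i),
\]
which is the claimed formula. The only step requiring genuine care is the first one: pinning down that $\ag$ is indeed the necklace Lie algebra $(T_0\Hq)/[T_0\Hq,T_0\Hq]$ with matching conventions for the symplectic pairing, the grading shift by $2$, and the left- versus right-action of $\mathfrak{S}_{k+2}$ on tensor factors. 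This is part of the standard treatment of noncommutative symplectic geometry going back to Kontsevich, but it must be carried out explicitly once in order to be sure the $\mathfrak{S}_{k+2}$-action used in the identification is precisely the permutation action that makes the Schur--Weyl step apply verbatim.
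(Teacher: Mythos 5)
Your proposal is correct and follows essentially the same route as the paper: both identify $\mathfrak{a}_g(k)$ with the cyclic part of $H_\Q^{\otimes(k+2)}$ (the paper uses invariants under $\Z/(k+2)$, you use coinvariants, which are canonically isomorphic over $\Q$) and then extract $n_\la$ from Schur--Weyl duality by averaging $\chi_\la$ over the cyclic subgroup. Your packaging via $W'_k=\mathrm{Ind}^{\mathfrak{S}_{k+2}}_{C_{k+2}}\Q$ and Frobenius reciprocity is just the induced-module restatement of the paper's ``restrict to $\Z/(k+2)\subset\mathfrak{S}_{k+2}$ and apply the standard argument.''
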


\begin{proof}
As is well known, for any $k$ we have an isomorphism 
$$
H_\Q^{\otimes (k+2)}\cong \bigoplus_{|\la|=k+2}\ (\mathrm{dim}\, \la_{\mathfrak{S}_{k+2}}) V_\la
$$
as $\mathrm{GL}(2g,\Q)$-modules, where $|\la|$ denotes 
the number of the boxes of the Young diagram $\la$.
On the other hand, we have an isomorphism
$$
\mathfrak{a}_g(k)=\left(H_\Q^{\otimes (k+2)}\right)^{\Z/(k+2)}
$$
where the cyclic group $\Z/(k+2)$ of order $k+2$ acts on
$H_\Q^{\otimes (k+2)}$ by cyclic
permutations. Then the claim follows by considering the restriction
to the subgroup $\Z/(k+2)\subset \mathfrak{S}_{k+2}$ and
applying the standard argument.
\end{proof}

Our explicit irreducible decompositions of $\mathfrak{h}_{g,1}(k)$ and $\mathfrak{a}_g(k)$
as $\mathrm{Sp}(2g,\Q)$-modules are done as follows. First we determine the
irreducible decompositions of these modules as $\mathrm{GL}(2g,\Q)$-modules. 
For this, we use Corollary \ref{cor:h} and Proposition \ref{prop:a}, respectively,
to compute the multiplicities $m_\la, n_\la$ by applying the
formula of Frobenius which expresses the value $\chi_\la(\gamma)$ 
for any given element $\gamma\in\mathfrak{S}_{k+2}$ as the coefficient
of a certain polynomial $f_\la$ with respect to a certain monomial 
$x_\la$ (see e.g. \cite{fh}, Frobenius Formula 4.10). 
We made a systematic computer computation by using this formula.
Recall here that we must take the number of variables for the polynomial $f_\la$
at least as large as the number $h(\la)$ of rows of the Young diagram $\la$.
Hence, the necessary data will get larger and larger as the number $h(\la)$
increases. To overcome this difficulty, we adopted the following
simple argument. Let $\la'$ denote the {\it conjugate} Young diagram
of any given one $\la$. Then, as is well known, we have an isomorphism
$$
\la'_{\mathfrak{S}_{k+2}}\cong \la_{\mathfrak{S}_{k+2}}\otimes [1^{k+2}]
$$
where $[1^{k+2}]$ denotes the $1$-dimensional alternating
representation. Since, for any given $\mathfrak{S}_{k+2}$-module $W$,
its multiplicity of an irreducible representation
$\la$ is equal to that of $\la'$ in the
conjugate representation $W'=W\otimes [1^{k+2}]$, we can easily 
deduce the following.

\begin{prop} 
In the same situation as in Corollary \ref{cor:h} and Proposition \ref{prop:a},
we have
$$
m_\la=\frac{1}{(k+2)!} \sum_{\gamma\in \mathfrak{S}_{k+2}}
(\mathrm{sgn}\;\gamma)\;\chi_k (\gamma)\chi_{\la'}(\gamma)
$$
$$
n_\la=\frac{1}{k+2} \sum_{i=1}^{k+2}\, (\mathrm{sgn}\;\sigma_{k+2}^i)\,
\chi_{\la'}(\sigma_{k+2}^i)
$$
where $\mathrm{sgn}\;\gamma$ denotes the sign of $\gamma$.
\end{prop}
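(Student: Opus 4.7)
The plan is to deduce both identities from a single elementary character identity for the symmetric group, and then substitute it into the formulas of Corollary \ref{cor:h} and Proposition \ref{prop:a}. The key input is the tensor product isomorphism $\la'_{\mathfrak{S}_{k+2}} \cong \la_{\mathfrak{S}_{k+2}} \otimes [1^{k+2}]$ recalled just before the statement. At the level of characters this reads $\chi_{\la'}(\gamma) = (\mathrm{sgn}\,\gamma)\,\chi_\la(\gamma)$ for every $\gamma \in \mathfrak{S}_{k+2}$, since the alternating representation $[1^{k+2}]$ has character $\mathrm{sgn}$.

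First I would multiply this identity by $\mathrm{sgn}\,\gamma$ and use $(\mathrm{sgn}\,\gamma)^2 = 1$ to invert it to $\chi_\la(\gamma) = (\mathrm{sgn}\,\gamma)\,\chi_{\la'}(\gamma)$. Then I would substitute this expression for $\chi_\la(\gamma)$ into the formula of Corollary \ref{cor:h}, which at once produces the claimed expression for $m_\la$. The same substitution applied to the formula of Proposition \ref{prop:a} --- whose sum runs only over the cyclic subgroup generated by $\sigma_{k+2}$ --- immediately yields the asserted expression for $n_\la$.

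There is no genuine obstacle: the argument amounts to a one-line substitution. However, I would emphasize that the content of this proposition lies not in its proof but in its computational purpose. When the character $\chi_\la(\gamma)$ is evaluated by the Frobenius formula (as discussed in the preceding paragraph in Section \ref{sec:irrep}), one must work with at least $h(\la)$ formal variables, and the polynomial data needed grows very quickly in this number. Replacing $\chi_\la$ by $\mathrm{sgn}\cdot\chi_{\la'}$ allows one to use $h(\la') = \la_1$ variables instead, a decisive saving whenever $\la$ has many rows of short length rather than few rows of long length. This is precisely the device that lightens the burden on the computer in the systematic calculations of the irreducible decompositions of $\mathfrak{h}_{g,1}(k)$ and $\mathfrak{a}_g(k)$ carried out in this section.
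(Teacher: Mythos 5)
Your proof is correct and is essentially the paper's own argument: the paper likewise deduces the proposition from the isomorphism $\la'_{\mathfrak{S}_{k+2}}\cong \la_{\mathfrak{S}_{k+2}}\otimes [1^{k+2}]$, phrased via equality of multiplicities in the conjugate module rather than via the character identity $\chi_{\la'}(\gamma)=(\mathrm{sgn}\,\gamma)\chi_\la(\gamma)$, which is the same one-line substitution into Corollary \ref{cor:h} and Proposition \ref{prop:a}. Your closing remark on why this halves the number of Frobenius-formula variables also matches the paper's stated motivation.
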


Since the two numbers $h(\la')$ and $h(\la)$ are
so to speak {\it complementary} to each other (e.g.
$h([1^k]')=1$ while $h([1^k])=k$), we can make computer computations
roughly twice as much compared to the situation where we do not use
this method. Also certain symmetries in the structure of $\mathfrak{h}_{g,1}$
as well as $\mathfrak{a}_g$ which we found in \cite{mss2}
decrease necessary computations considerably.

Next we use the known formula of decomposing a given irreducible
$\mathrm{GL}(2g,\Q)$-module $\la_{GL}$ into $\mathrm{Sp}(2g,\Q)$-irreducible
components (see e.g. formula (25.39) in \cite{fh}). We made 
a computer program of this formula and by using it we made a database
which contains the $\mathrm{Sp}(2g,\Q)$-irreducible decomposition of
all the $\mathrm{GL}(2g,\Q)$-modules $\la_{GL}$ with the number
$|\la|$ of boxes of the Young diagram $\la$ less than or equal to 
 $30$
(there are $28628$ such Young diagrams).

As mentioned already, we have so far determined the 
$\mathrm{Sp}(2g,\Q)$-irreducible decompositions of
$\mathfrak{h}_{g,1}(k)$ and $\mathfrak{a}_{g}(k)$ for all $k\leq 20$
by making use of the above method. Although here we omit the description of the 
results, see Tables \ref{tab:i1} and \ref{tab:i2} in which we
express the dimensions of the $\mathrm{Sp}$-invariant subspaces
$\mathfrak{h}_{g,1}(k)^{\mathrm{Sp}}$ for all $k\leq 20$.
These tables contain more precise information on these subspaces.
Namely they contain a complete description how these subspaces
degenerate according as the genus $g$ decreases from the stable
range one by one to the final case $g=1$.

\section{Computation of the dimensions of the $\mathrm{Sp}$-invariant
subspaces of various $\mathrm{Sp}$-modules}\label{sec:di}

In this section, we describe several methods which we developed
in our computer computations. We have to determine the dimensions
of the subspaces consisting of the $\mathrm{Sp}$-invariant
elements of various $\mathrm{Sp}$-modules such as
$$
\left(\wedge^{d_1} \mathfrak{c}_g(i_1)\right)\otimes\cdots\otimes 
\left(\wedge^{d_s} \mathfrak{c}_g(i_s)\right)
$$
or the corresponding modules where we replace $\mathfrak{c}_g$
by $\mathfrak{h}_{g,1}$ or $\mathfrak{a}_g$. We mention that the
character of any of these modules is known so that {\it theoretically}
there is no problem. More precisely, we can adopt the method 
given in \cite{km}, which treated the case of $\mathfrak{c}_g$
by extending the original one due to Perchik \cite{p},
to the other two case as well to obtain closed formulas for the above
dimensions of $\mathrm{Sp}$-invariants. Unfortunately however, 
these formulas are too complicated so that when we use computers
to obtain explicit values, the memory problem arises in a very early stage.
We have developed other methods described as follows.
\par
\vspace{5mm}
Method (I) ($\mathrm{GL}$-decomposition of tensor products)
\par
\vspace{3mm}
There is a formula, called the Littlewood-Richardson rule, which gives
the irreducible decomposition of the tensor product of any two 
$\mathrm{GL}$-modules and a similar formula is known for the case of 
$\mathrm{Sp}$-modules (see e.g. \cite{fh}). However, the latter formula 
is considerably more complicated than the former one. In view of this,
we postpone the $\mathrm{Sp}$-irreducible decomposition as late as possible and we
make the $\mathrm{GL}$-irreducible decomposition as far as possible.
We made a computer program for the Littlewood-Richardson rule and apply
it in various stages in our computation.
\par
\vspace{5mm}
Method (II) ($\mathrm{Sp}$-decomposition of $\mathrm{GL}$-modules)
\par
\vspace{3mm}
There is a combinatorial formula which gives the 
$\mathrm{Sp}$-irreducible decomposition of any irreducible
$\mathrm{GL}$-module $\la_{\mathrm{GL}}$, namely the restriction law corresponding to
the pair $\mathrm{Sp}(2g,\Q)\subset \mathrm{GL}(2g,\Q)$ (see \cite{fh}).
We made a computer program for this procedure and apply
it in various stages in our work.
\par
\vspace{5mm}
Method (III) (Counting the number of Young diagrams with {\it multiple double floors})
\par
\vspace{3mm}
Let us call a Young diagram $\la$ {\it with multiple double floors} if it
has the form $\la=[\la_1\la_1\cdots\la_s\la_s]$. It is easy to see
that $\la$ is such a Young diagram if and only if its conjugate Young
diagram $\la'$ is of {\it even type} in the sense that all the numbers
appearing in it are even integers. Now at the final stage of counting the
dimension of the $\mathrm{Sp}$-invariant subspace of a $\mathrm{GL}$-module
$V$, we can determine the required number without performing the $\mathrm{Sp}$-decomposition
of $V$ by adopting the following method.

\begin{prop}
Let $V$ be a $\mathrm{GL}(2g,\Q)$-module and let $V^{\mathrm{Sp}}$ denote
the subspace consisting of $\mathrm{Sp}(2g,Q)$-invariant elements of $V$
considered as an $\mathrm{Sp}(2g,\Q)$-module. Also let
$$
V=\bigoplus_{\la} m_\la(V)\ \la_{\mathrm{Sp}}
$$
be the $\mathrm{Sp}$-irreducible decomposition of $V$. Then we have the equality
$$
\mathrm{dim}\, V^{\mathrm{Sp}}
=\sum_{\la: \text{multiple double floors}} m_\la(V).
$$
\end{prop}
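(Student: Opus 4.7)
The plan is to reduce the statement to a question about a single irreducible $\mathrm{GL}(2g,\Q)$-representation and then apply Littlewood's branching rule. Decomposing $V$ into $\mathrm{GL}$-irreducibles as $V=\bigoplus_\mu n_\mu(V)\,\mu_{\mathrm{GL}}$, additivity of the functor of $\mathrm{Sp}$-invariants gives
$$\dim V^{\mathrm{Sp}} \;=\; \sum_\mu n_\mu(V)\cdot \dim(\mu_{\mathrm{GL}})^{\mathrm{Sp}},$$
so everything comes down to showing that $\dim(\mu_{\mathrm{GL}})^{\mathrm{Sp}}$ equals $1$ when $\mu$ is of multiple double floor shape and $0$ otherwise; the displayed formula then follows by reassembling the sum according to which $\mathrm{GL}$-summands contribute a one-dimensional invariant line.

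For the key step I would invoke Littlewood's restriction rule for the pair $\mathrm{GL}(2g,\Q)\supset\mathrm{Sp}(2g,\Q)$ which is precisely the branching formula underlying Method~(II) above (formula (25.39) of \cite{fh}): in the stable range (that is, when $g$ exceeds the number of rows of $\mu$) one has
$$\mu_{\mathrm{GL}}\downarrow_{\mathrm{Sp}}\;=\;\bigoplus_\nu \Bigl(\sum_\delta c^{\mu}_{\nu,\delta}\Bigr)\nu_{\mathrm{Sp}},$$
where the inner sum runs over partitions $\delta$ whose conjugate $\delta'$ has only even parts, equivalently over multiple double floor partitions, and $c^{\mu}_{\nu,\delta}$ is the Littlewood--Richardson coefficient. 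Specializing to the trivial representation $\nu=\emptyset$ collapses the inner sum to $\sum_\delta c^{\mu}_{\emptyset,\delta}=\sum_\delta \delta_{\mu,\delta}$, which is $1$ exactly when $\mu$ itself is of multiple double floor shape and $0$ otherwise. Summing the contributions against the multiplicities $n_\mu(V)$ yields the asserted equality.

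The only subtle point I anticipate is verifying the stable-range hypothesis of Littlewood's rule; this is automatic in our setting because every Young diagram that appears in the chain complexes under consideration has boundedly many rows, and $g$ is always taken in the stable range exploited throughout Section~\ref{sec:p}. Granting this, the argument is a direct assembly of a general character-theoretic identity with additivity of invariants, and no further calculation is needed. As a sanity check, the trivial summand $\emptyset$ (the case $s=0$) is indeed a multiple double floor partition, and for $\mu=[1^2]$ the rule correctly recovers the one-dimensional $\mathrm{Sp}$-invariant spanned by the symplectic form inside $\wedge^2 H_\Q$.
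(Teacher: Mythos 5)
Your argument is correct and is essentially the paper's own proof: the paper likewise reduces everything to the fact that $\dim(\mu_{\mathrm{GL}})^{\mathrm{Sp}}$ equals $1$ for multiple double floor shapes and $0$ otherwise, citing the $\mathrm{Sp}(2g,\Q)\subset\mathrm{GL}(2g,\Q)$ restriction law, while you merely make explicit the Littlewood--Richardson specialization $\nu=\emptyset$ and the additivity of invariants. You also correctly read the displayed decomposition as the $\mathrm{GL}$-irreducible one (the subscript $\mathrm{Sp}$ in the statement is evidently a misprint), which is exactly the interpretation the paper's proof uses.
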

\begin{proof}
This follows from the fact that 
$$
\mathrm{dim}\, (\la_{\mathrm{GL}})^{\mathrm{Sp}}=
\begin{cases}
&1\quad (\text{$\la$: multiple double floors})\\
&0\quad (\text{otherwise})
\end{cases}
$$
which follows from the restriction law corresponding to
the pair $\mathrm{Sp}(2g,\Q)\subset \mathrm{GL}(2g,\Q)$.
\end{proof}

We made a computer program which counts the number of 
Young diagrams with multiple double floors
in any linear combination of Young diagrams.
\par
\vspace{5mm}
Method (IV) (Counting pairs of Young diagrams with the same shape)
\par
\vspace{3mm}
The difficulty in applying our program of performing the
Littlewood-Richardson rule for the tensor product
$V_1\otimes V_2$
increases according to the numbers of boxes of the Young diagrams
appearing in the irreducible decompositions of $V_i$ get larger and larger. 
In case we cannot obtain the result within an appropriate time, we
adopt this method which depends on the following fact.
\begin{prop}
Let $V_1,V_2$ be two 
$\mathrm{GL}(2g,\Q)$-modules and let
$$
V_i=\bigoplus_{\la} m_\la(V_i)\ \la_{\mathrm{Sp}} \quad (i=1,2)
$$
be the $\mathrm{Sp}$-irreducible decompositions of $V_i$. Then we have the equality
$$
\mathrm{dim}\, (V_1\otimes V_2)^{\mathrm{Sp}}
=\sum_\la  m_\la(V_1) m_\la(V_2).
$$
\end{prop}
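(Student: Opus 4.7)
The plan is to reduce the statement to Schur's lemma applied to the reductive group $\mathrm{Sp}(2g,\Q)$, exploiting the classical self-duality of its irreducible representations.

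First, I would use the standard identification
$$
(V_1\otimes V_2)^{\mathrm{Sp}}\cong \Hom_{\mathrm{Sp}}(V_1^*,V_2),
$$
which comes from the canonical isomorphism $V_1^*\otimes V_2\cong \Hom(V_1,V_2)$ together with the elementary observation that $\mathrm{Sp}$-equivariance of a linear map is the same thing as $\mathrm{Sp}$-invariance of the corresponding tensor. This is valid for any finite-dimensional representations in characteristic zero, so it applies in the present setting once we restrict from $\mathrm{GL}(2g,\Q)$ to $\mathrm{Sp}(2g,\Q)$.

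Next I would invoke the fact that every irreducible representation $\la_{\mathrm{Sp}}$ of $\mathrm{Sp}(2g,\Q)$ is self-dual. The quickest justification is that the standard representation $\Hq$ is self-dual via the symplectic form, so the dual of any Schur functor applied to $\Hq$ is again isomorphic to itself; equivalently, the longest element of the Weyl group of type $C_g$ acts as $-1$ on the weight lattice, so every highest weight is fixed under duality. Hence $V_1^*\cong V_1$ as $\mathrm{Sp}$-modules, and combining with the previous step,
$$
(V_1\otimes V_2)^{\mathrm{Sp}}\cong \Hom_{\mathrm{Sp}}(V_1,V_2).
$$

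Finally I would substitute the isotypic decompositions of $V_1$ and $V_2$ and apply Schur's lemma: since each $\la_{\mathrm{Sp}}$ is absolutely irreducible with endomorphism ring $\Q$, we have $\dim\Hom_{\mathrm{Sp}}(\la_{\mathrm{Sp}},\mu_{\mathrm{Sp}})=\delta_{\la\mu}$, which yields
$$
\dim(V_1\otimes V_2)^{\mathrm{Sp}}=\sum_\la m_\la(V_1)\,m_\la(V_2).
$$
There is no substantial obstacle here; the only point needing care is the self-duality of $\mathrm{Sp}$-irreducibles, but this is classical. It is worth emphasizing that the analogous formula for $\mathrm{GL}$-invariants would require a dual on one of the factors, since irreducible $\mathrm{GL}$-representations are not in general self-dual, and it is precisely this simplification on the symplectic side that makes Method (IV) a practical computational shortcut.
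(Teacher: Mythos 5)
Your proof is correct and follows essentially the same route as the paper: the paper simply cites the well-known fact that $\dim(\la_{\mathrm{Sp}}\otimes\mu_{\mathrm{Sp}})^{\mathrm{Sp}}=\delta_{\la\mu}$ and sums over the isotypic decompositions, while you additionally spell out the standard justification of that fact via $\Hom_{\mathrm{Sp}}$, self-duality of symplectic irreducibles, and Schur's lemma.
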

\begin{proof}
This follows from the well known fact that
$$
\mathrm{dim}\, (\la_{\mathrm{Sp}}\otimes \mu_{\mathrm{Sp}})^{\mathrm{Sp}}=
\begin{cases}
&1\quad (\la=\mu)\\
&0\quad (\text{otherwise})
\end{cases}
.
$$
\end{proof}
Here is another similar formula.
\begin{prop}
Let $V$ be an
$\mathrm{Sp}(2g,\Q)$-module and let
$$
V=\bigoplus_{\la} m_\la(V)\ \la_{\mathrm{Sp}} 
$$
be its $\mathrm{Sp}$-irreducible decomposition. Then we have the equality
$$
\mathrm{dim}\, (\wedge^2 V)^{\mathrm{Sp}}
=\frac{1}{2}\left(\sum_{\text{$|\la|$: odd}}  m_\la(V) (m_\la(V)+1)+
\sum_{\text{$|\la|$: even}}  (m_\la(V)-1) m_\la(V) \right).
$$
\end{prop}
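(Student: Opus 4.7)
The plan is to decompose $\wedge^2 V$ according to the isotypic decomposition $V=\bigoplus_\la m_\la(V)\la_{\mathrm{Sp}}$ and then take $\mathrm{Sp}$-invariants. Writing $V = \bigoplus_\la (\la_{\mathrm{Sp}})^{\oplus m_\la}$ and applying the standard identity $\wedge^2(A\oplus B) = \wedge^2 A\oplus (A\otimes B)\oplus \wedge^2 B$ repeatedly, I would obtain
$$
\wedge^2 V \cong \bigoplus_\la \Bigl(m_\la\,\wedge^2\la_{\mathrm{Sp}}\;\oplus\;\binom{m_\la}{2}(\la_{\mathrm{Sp}}\otimes\la_{\mathrm{Sp}})\Bigr) \;\oplus\; \bigoplus_{\la\neq\mu} m_\la m_\mu (\la_{\mathrm{Sp}}\otimes\mu_{\mathrm{Sp}}),
$$
where I write $m_\la$ for $m_\la(V)$. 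By the preceding proposition, the cross terms with $\la\neq\mu$ contribute nothing to the $\mathrm{Sp}$-invariant part, and each $(\la_{\mathrm{Sp}}\otimes\la_{\mathrm{Sp}})^{\mathrm{Sp}}$ has dimension one. Thus the computation reduces to determining $\dim (\wedge^2\la_{\mathrm{Sp}})^{\mathrm{Sp}}$ for each $\la$.

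The key input here is the Frobenius--Schur indicator for $\mathrm{Sp}(2g,\Q)$-modules: since every $\la_{\mathrm{Sp}}$ is self-dual, $\la_{\mathrm{Sp}}\otimes\la_{\mathrm{Sp}}$ contains exactly one trivial summand, and it lies either in $S^2\la_{\mathrm{Sp}}$ or in $\wedge^2\la_{\mathrm{Sp}}$. I would invoke the well-known fact that this indicator for $\mathrm{Sp}(2g)$-irreducibles is governed by the parity of $|\la|$: namely $\la_{\mathrm{Sp}}$ carries an invariant symmetric (orthogonal) form when $|\la|$ is even and an invariant antisymmetric (symplectic) form when $|\la|$ is odd. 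This is checked on the fundamental representation $[1]_{\mathrm{Sp}} = H_\Q$, which carries the defining symplectic form (hence $|\la|=1$ odd gives antisymmetric), and extends to all $\la$ by the multiplicativity of the indicator under tensor products and by its stability under the restriction law from $\mathrm{GL}(2g,\Q)$. Consequently
$$
\dim(\wedge^2 \la_{\mathrm{Sp}})^{\mathrm{Sp}} = \begin{cases} 1 & |\la| \text{ odd},\\ 0 & |\la| \text{ even}.\end{cases}
$$

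Combining these ingredients gives
$$
\dim(\wedge^2 V)^{\mathrm{Sp}} = \sum_{|\la|\text{ odd}} \Bigl(m_\la + \binom{m_\la}{2}\Bigr) + \sum_{|\la|\text{ even}} \binom{m_\la}{2},
$$
which rearranges into the claimed identity after using $m_\la+\binom{m_\la}{2}=\frac{1}{2}m_\la(m_\la+1)$ and $\binom{m_\la}{2}=\frac{1}{2}m_\la(m_\la-1)$. The only non-routine step is the Frobenius--Schur computation, and I expect this parity statement to be the main conceptual obstacle; everything else is bookkeeping with the isotypic decomposition and the preceding proposition.
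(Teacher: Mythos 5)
Your proof is correct and takes essentially the same route as the paper: both reduce, via the isotypic decomposition and the one-dimensionality of $(\la_{\mathrm{Sp}}\otimes\la_{\mathrm{Sp}})^{\mathrm{Sp}}$, to the Frobenius--Schur parity fact that $\dim(\wedge^2\la_{\mathrm{Sp}})^{\mathrm{Sp}}$ equals $1$ for $|\la|$ odd and $0$ for $|\la|$ even, which is precisely the ``well known fact'' the paper cites without proof, so you have only made the bookkeeping explicit. Be aware, though, that your sketched justification of that parity fact---multiplicativity of the indicator and stability under the $\mathrm{GL}$-restriction law---is not a genuine argument as stated (the standard proof realizes $\la_{\mathrm{Sp}}$ inside $H_\Q^{\otimes|\la|}$ equipped with the $(-1)^{|\la|}$-symmetric invariant form $\omega^{\otimes|\la|}$); since the paper itself treats the fact as standard, this does not affect the comparison.
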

\begin{proof}
This follows from the well known fact that
$$
\mathrm{dim}\, (\wedge^2\,\la_{\mathrm{Sp}})^{\mathrm{Sp}}=
\begin{cases}
&1\quad (\text{$|\la|$: odd})\\
&0\quad (\text{$|\la|$: even})
\end{cases}
.
$$
\end{proof}

We made a computer program which counts the number of 
pairs with the same Young diagrams in any two linear
combination of Young diagrams. We can use this method to
check the accuracy of our computations by applying it
to plural expressions 
$$
V=V_1\otimes V_2=V'_1\otimes V'_2
$$
as tensor products of the same $\mathrm{GL}(2g,\Q)$-module $V$.

\par
\vspace{5mm}
Method (V) (Adams operations)
\par
\vspace{3mm}
The most difficult part in our computation is the
determination of the $\mathrm{GL}$ as well as
$\mathrm{Sp}$ irreducible decomposition of the 
exterior powers $\wedge^k [1^3]_{\mathrm{GL}}$
of $\mathfrak{h}_{g,1}(1)=[1^3]_{\mathrm{GL}}$.

As is well known, the character of 
$\wedge^k [1]_{\mathrm{GL}}=[1^k]_{\mathrm{GL}}$
is given by
$$
\mathrm{ch}([1^k]_{\mathrm{GL}})=E_k(x_1,...,x_N)
$$
where $E_k$ denotes the $k$-th elementary symmetric 
polynomial and $N$ denotes some fixed large number.
In particular $\mathrm{ch}([1^3]_{\mathrm{GL}})=E_3$.
Then the character of $\wedge^k [1^3]_{\mathrm{GL}}$ can be written
as
$$
\mathrm{ch}(\wedge^k [1^3]_{\mathrm{GL}})
=E_k(E_3)
$$
where $E_k(E_3)$ denotes the $k$-th elementary symmetric polynomial
with respect to the new variables $\{x_ix_jx_k\}_{i<j<k}$.
Here we apply the well-known classical combinatorial algorithm
to express any symmetric polynomial as a polynomial on the
elementary symmetric polynomial to obtain a formula for 
the character of $\wedge^k [1^3]_{\mathrm{GL}}$. For example
$\mathrm{ch}(\wedge^2 [1^3]_{\mathrm{GL}})
=E_6+E_2E_4-E_1E_5$. Then we apply Method (I) to obtain
the $\mathrm{GL}$-irreducible decomposition and further
apply Method (II) to obtain the $\mathrm{Sp}$-irreducible decomposition.
For example we have
\begin{align*}
\wedge^2 [1^3]_{\mathrm{GL}}&=[1^6]_{\mathrm{GL}}+[2^21^2]_{\mathrm{GL}}\\
&=[1^6]_{\mathrm{Sp}}+[2^21^2]_{\mathrm{Sp}}+2 [1^4]_{\mathrm{Sp}}+[21^2]_{\mathrm{Sp}}+[2^2]_{\mathrm{Sp}}+3 [1^2]_{\mathrm{Sp}}+2 [0]_{\mathrm{Sp}}
\end{align*}
where $[0]_{\mathrm{Sp}}$ denotes the trivial representation.
For large $k$, we used the computer software LiE to obtain the
irreducible decompositions. However, because of the memory problem 
we could obtain the $\mathrm{GL}$-irreducible decomposition of 
$\wedge^k [1^3]_{\mathrm{GL}}$ only up to $k=6$ or so.
To overcome this difficulty, we used the Adams operations 
$\psi^k\ (k=1,2,\ldots)$ which satisfy the identity
$$
\wedge^k V=\frac{1}{k}\left(\wedge^{k-1}V\otimes V-\wedge^{k-2}V\otimes \psi^2(V)+\cdots +(-1)^{k-1}\psi^k(V)\right)
$$
on any representation $V$. It turns out that the computer computation
of the Adams operation is much easier than that of the exterior powers.
By utilizing this merit of the Adams operations, we have determined so far the $\mathrm{GL}$-irreducible decomposition of 
$\wedge^k [1^3]_{\mathrm{GL}}$ for $k\leq 10$ and
the $\mathrm{Sp}$-irreducible decomposition of 
$\wedge^k [1^3]_{\mathrm{GL}}$ for $k\leq 9$. As for the 
computation of the dimensions of the $\mathrm{Sp}$-invariant
subspaces of $\mathrm{GL}$-modules with the form
$\wedge^k [1^3]_{\mathrm{GL}}\otimes V$ for certain $V$, we can
go further up to $k=16$ or so. Here we express
$\wedge^k [1^3]_{\mathrm{GL}} (k=10, 11,\ldots)$ in terms of
$\wedge^k [1^3]_{\mathrm{GL}} (k=1, 2, \ldots ,9)$ and 
$\psi^k [1^3]_{\mathrm{GL}} (k=1,2,\ldots)$ and apply the preceding methods.
The coefficients of this expression are complicated
rational numbers rather than the integers.
From this fact, we obtain an {\it extra merit}
of this method. Namely, we can check the accuracy of the computation
just by confirming the answer to be an {\it integer} because
it is most likely that any small mistake in the computation
would force that the output is {\it not} an integer.

\par
\vspace{5mm}
Method (VI) (Counting the number of graphs with a prescribed type)
\par
\vspace{3mm}

In \cite{morita96}, a certain linear mapping
$$
\Q\langle\text{isomorphism class of trivalent graph with $2k$-verticies}\rangle
\rightarrow
\left(\wedge^{2k}[1^3]_{\mathrm{GL}}\right)^{\mathrm{Sp}}
$$
was introduced by making use of a classical result of Weyl,   which is an isomorphism in the stable range.
Here the left hand side denotes the vector space generated by the isomorphism classes of trivalent graphs with $2k$ vertices
where we allow a trivalent graph to have multi-edges and/or
loops. In the theory of enumeration of graphs, the numbers of such trivalent graphs are known for $k\leq 16$ 
by making use of the result of Read \cite{read}.

\par
\vspace{5mm}
Method (VII) (Checking the accuracy of computations)
\par
\vspace{3mm}

We have adopted a few checking procedure to confirm the accuracy of our
computations. As for the irreducible decompositions, we have checked that
the dimension of the resulting decomposition coincides with that of the original
module by applying the Weyl character formula. As for the dimension counting
of various $\mathrm{Sp}$-invariant subspaces, we carried out multiple different ways
of computations and checked that the answers coincide with each other.

In short, our strategy is a mixture of theoretical considerations and computer computations.
By combining the above Methods (I)-(VII) in various ways, 
we made explicit computer computations the results of which will be given in the following three
sections.

\section{The case of $\mathfrak{c}_g$ and the graph homology as well as transversely symplectic foliations}\label{sec:c}

First we consider the commutative case.
From the point of view of explicit computations,
this case of $\mathfrak{c}_g$ is the simplest among the three
Lie algebras because each piece $\mathfrak{c}_g(k)\cong S^{k+2} H_\Q$
is a single irreducible $\mathrm{Sp}(2g,\Q)$-module. 
However, its stable (co)homology is
far from being well understood and there are big mysteries here.
Before describing them, the result of our computation for this case
is depicted in Table \ref{tab:c}.

\begin{table}[h]
\caption{$\text{Case of $\mathfrak{c}_g$}$}
\begin{center}
\begin{tabular}{|c|r|r|r|r|r|r|r|r|r|r|}
\noalign{\hrule height0.8pt}
\hline
$w$ & $2$ & $4$ & $6$ & $8$ & $10$ & $12$ & $14$ & $16$ & $18$ & $20$\\
\hline
$C_1$ & $0$ & $0$ & $0$ & $0$ & $0$ & $0$ & $0$ & $0$ & $0$ & $0$\\
\hline
$C_2$ & $1$ & $0$ & $1$ & $0$ & $1$ & $0$ & $1$ & $0$ & $1$ & $0$\\
\hline
$C_3$ & {} & $1$ & $1$ & $4$ & $3$ & $8$ & $6$ & $12$ & $10$ & $17$\\
\hline
$C_4$ & {} & $3$ & $0$ & $16$ & $20$ & $63$ & $78$ & $164$ & $205$ & $355$\\
\hline
$C_5$ & {} & {} & $4$ & $20$ & $112$ & $271$ & $748$ & $1484$ & $3103$ & $5447$\\
\hline
$C_6$ & {} & {} & $7$ & $15$ & $269$ & $1013$ & $3964$ & $11047$ & $29423$ & $67611$\\
\hline
$C_7$ & {} & {} & {} & $25$ & $310$ & $2784$ & $14034$ & $59153$ & $200982$ & $613281$\\
\hline
$C_8$ & {} & {} & {} & $24$ & $223$ & $4690$ & $36530$ & $220693$ & $1023318$ & $4068707$\\
\hline
$C_9$ & {} & {} & {} & {} & $166$ & $4683$ & $68504$ & $592111$ & $3862954$ & $20226716$\\
\hline
$C_{10}$ & {} & {} & {} & {} & $86$ & $2963$ & $87552$ & $1167459$ & $10828229$ & $76399055$\\
\hline
$C_{11}$ & {} & {} & {} & {} & {} & $1395$ & $73358$ & $1682134$ & $22709573$ & $220634704$\\
\hline
$C_{12}$ & {} & {} & {} & {} & {} & $426$ & $39797$ & $1727415$ & $35748802$ & $488935936$\\
\hline
$C_{13}$ & {} & {} & {} & {} & {} & {} & $13984$ & $1221607$ & $41935536$ & $832479480$\\
\hline
$C_{14}$ & {} & {} & {} & {} & {} & {} & $2732$ & $570419$ & $35952084$ & $1085617203$\\
\hline
$C_{15}$ & {} & {} & {} & {} & {} & {} & {} & $164365$ & $21796235$ & $1073488879$\\
\hline
$C_{16}$ & {} & {} & {} & {} & {} & {} & {} & $23701$ & $8867266$ & $789223120$\\
\hline
$C_{17}$ & {} & {} & {} & {} & {} & {} & {} & {} & $2199842$ & $417233525$\\
\hline
$C_{18}$ & {} & {} & {} & {} & {} & {} & {} & {} & $258951$ & $149905889$\\
\hline
$C_{19}$ & {} & {} & {} & {} & {} & {} & {} & {} &  {}  & $32900910$\\
\hline
$C_{20}$ & {} & {} & {} & {} & {} & {} & {} & {} &  {}  & $3365151$\\
\hline
$\text{total}$ & $1$ & $4$ & $13$ & $104$ & $1190$ & $18296$  & $341288$ & $7441764$ & $185416514$ & $5195165986$\\
\hline
$\chi$ & $1$ & $2$ & $3$ & $6$ & $8$ & $14$ & $20$ & $32$  & $44$ & $68$\\
\noalign{\hrule height0.8pt}
\end{tabular}
\end{center}
\label{tab:c}
\end{table}

Here $C_k$ of the weight $w$ part denotes
$$
\lim_{g\to\infty}\
\mathrm{dim}
\left(
\bigoplus_{\begin{subarray}{c}
i_1+ i_2+\cdots + i_w=k\\ 
i_1+2 i_2+\cdots +w i_w=w
\end{subarray}}
\wedge^{i_1} (S^{3} H_\Q)\otimes\wedge^{i_2} (S^{4} H_\Q)\otimes\cdots\otimes
\wedge^{i_w} (S^{w+2} H_\Q)\right)^{\mathrm{Sp}}
$$
so that we have a {\it finite} dimensional chain complex
$0\to C_{w}\to\cdots\to C_{1}\to 0$.
Also $\chi$ denotes the Euler characteristic of this chain complex,
namely the weight $w$ part of the
$\mathrm{Sp}$-invariant stable homology
$
H_*(\mathfrak{c}^+_\infty)^{\mathrm{Sp}}_w.
$

\begin{proof}[Proof of Theorem $\ref{thm:chi}\ \mathrm{(i)}$]
This follows from Table \ref{tab:c}.
\end{proof}

Let $G_*^{(n)}\ (n\geq 2)$ be the graph complex defined by Kontsevich, which is a chain complex of
dimension $2n-2$.

\begin{thm}[Kontsevich {\cite{kontsevich1, kontsevich2}}]\label{thm:kontsevich-c}
For any $k \ge 1$ and $n \ge 1$, there exists an isomorphism 
\[PH_k (\mathfrak{c}^+_{\infty})^{\mathrm{Sp}}_{2n}
\cong 
H_{k} (G_*^{(n+1)}).\]
\label{thm:kc}
\end{thm}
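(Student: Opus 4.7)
The plan is to reinterpret the chain complex computing $H_*(\mathfrak{c}^+_\infty)^{\mathrm{Sp}}$ as the Kontsevich graph complex $G_*$, with weight tracking Euler characteristic and the primitive part capturing connectedness. The key input is classical invariant theory: by Weyl's first and second fundamental theorems for $\mathrm{Sp}(2g,\Q)$, the space of $\mathrm{Sp}$-invariants in $H_\Q^{\otimes 2m}$ is spanned, in the stable range, by the $(2m-1)!!$ perfect matchings of $\{1,\dots,2m\}$, each producing a product of $m$ copies of the symplectic form $\omega$. This is the source of the graph combinatorics.

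Concretely, I would start from the identification $\mathfrak{c}_g(k)\cong S^{k+2}H_\Q$ and expand a typical chain
\[
\wedge^{i_1}\mathfrak{c}_g(1)\otimes\cdots\otimes \wedge^{i_w}\mathfrak{c}_g(w)
\]
of weight $w$ as a quotient of a tensor power of $H_\Q$ of total rank $\sum_j (j+2)i_j = w + 2i$, where $i=\sum i_j$ is the number of wedge factors. Applying Weyl, the $\mathrm{Sp}$-invariants are spanned by matchings of the $w+2i$ half-edges into $(w+2i)/2 = w/2 + i$ pairs; each matching is visualized as a graph whose vertices correspond to the wedge factors, where a factor in $\mathfrak{c}_g(k) = S^{k+2}H_\Q$ becomes a vertex of valence $k+2\ge 3$ (symmetry of the half-edges at each vertex is automatic since we started with a symmetric tensor), and the wedge structure provides an ordering of the vertices modulo sign, i.e.\ an orientation of the graph. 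For weight $w=2n$, the resulting graph has $E-V = n$, hence Euler characteristic $-n$, matching Kontsevich's $G_*^{(n+1)}$. Stabilization in $g$ is essential: it ensures the Weyl relations from the second fundamental theorem disappear, so that distinct graphs yield linearly independent invariants.

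Next I would match the differentials. The Chevalley--Eilenberg differential on $\wedge^*\mathfrak{c}^+_g$ is built from the Poisson bracket on $S^*H_\Q$, which acts by applying $\omega^{-1}$ to one pair of variables, one from each of two factors. In the graph picture this exactly contracts two half-edges belonging to different vertices into a single edge joining a merged vertex; after dualizing this is Kontsevich's graph differential (edge expansion). Verifying sign conventions agree is a routine but somewhat delicate bookkeeping exercise, which I would carry out by fixing an orientation convention on each graph (ordering of vertices and of half-edges at each vertex, modulo the standard equivalences).

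Finally, the $\mathrm{Sp}$-invariant chain complex carries a natural Hopf algebra structure, where the coproduct corresponds to splitting the wedge factors and, under the graph identification, to splitting a graph into its connected components. Hence the primitive part corresponds precisely to connected graphs, which is what $G_*^{(n+1)}$ computes, giving the claimed isomorphism on primitive parts. The main obstacle I expect is not conceptual but the careful alignment of orientations and signs (and the stable vanishing of syzygies) so that edge-contraction on the graph side matches the Chevalley--Eilenberg bracket on the Lie side; once that is in place, the statement reduces to packaging Weyl's theorem together with the standard fact that primitives in a graded commutative Hopf algebra of polynomial type correspond to connected combinatorial objects.
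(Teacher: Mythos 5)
Two preliminary remarks on the comparison: the paper does not prove this statement at all --- it is quoted from Kontsevich \cite{kontsevich1}\cite{kontsevich2} (the detailed published account being Conant--Vogtmann's ``On a theorem of Kontsevich'') --- so your sketch can only be measured against that standard argument, and indeed it follows the same route: Weyl's fundamental theorems to express stable $\mathrm{Sp}$-invariants as complete contractions indexed by graphs, the bookkeeping $w=2E-2V$, so that weight $2n$ corresponds to Euler characteristic $1-(n+1)$ and homological degree corresponds to the number of vertices, matching of differentials, and primitives $=$ connected graphs via the Hopf structure coming from juxtaposition/disjoint union. All of that numerology is correct, and the stability remark (no syzygies from the second fundamental theorem once $g$ is large compared to the number of edges) is the right reason distinct graphs give independent invariants.

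The genuine weak point is the differential-matching step, which is more than ``delicate sign bookkeeping'' and, as literally stated, fails on the chain side. The invariant chain attached to a graph $G$ is built by placing a copy of $\omega^{-1}$ on each edge and symmetrizing at each vertex. Applying the Chevalley--Eilenberg boundary, the Poisson bracket of two vertices $s,t$ sums over \emph{all} pairs consisting of one half-edge at $s$ and one at $t$: the pairs that are matched to each other in $G$ produce the edge contraction $G/e$, but only up to a trace factor $2g$, while every unmatched pair produces a ``rerouted'' graph in which two distinct edges of $G$ are fused into one. So at finite $g$ the boundary of a graph chain is \emph{not} a signed sum of edge contractions, and no choice of orientation convention removes the extra terms. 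The clean statement --- and the one Kontsevich actually uses --- is dual: a graph defines an invariant \emph{functional} on $\wedge^{V}\mathfrak{c}^+_g$ by contracting the entries along the edges of $G$, and for these cochains the coboundary is exactly the sum over one-edge expansions of vertices, with no rerouting terms and no $g$-dependent factors; one then identifies invariants with coinvariants, uses stability, and dualizes back to homology (this is also the picture implicitly used in Section 5 of the paper when the top cohomology is computed via the ideal generated by $\delta(S^4H_\Q)$). Your phrase ``after dualizing this is Kontsevich's graph differential'' gestures at this, but the proof has to be organized around the functional/cochain picture (or around a stable normalization absorbing the $2g$'s) from the start; supplying that is the real content beyond Weyl's theorem. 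The remaining steps --- and the identification of primitives with connected graphs --- are fine as you state them.
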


\begin{prop}
The weight generating function, denoted by $c(t)$, for the
$\mathrm{Sp}$-invariant stable homology group $H_*(\mathfrak{c}^+_{\infty})^{\mathrm{Sp}}$
is given by
$$
c(t)=\prod_{n=2}^\infty (1-t^{2n-2})^{- e(G_*^{(n)})}
$$
where $e(G_*^{(n)})$ denotes the Euler characteristic of $H_*(G_*^{(n)})$.
\label{prop:ct}
\end{prop}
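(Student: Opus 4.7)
The plan is to deduce the formula by combining Kontsevich's identification of the primitive part (Theorem~\ref{thm:kc}) with the general product formula of Proposition~\ref{prop:wp}. First, I would invoke Kontsevich's main theorem from \cite{kontsevich1,kontsevich2}, which asserts that $\mathcal{K} := H_*(\mathfrak{c}^+_\infty)^{\mathrm{Sp}}$ is a free graded commutative algebra with respect to the homological degree. Together with the weight, this gives $\mathcal{K}$ a bigraded structure; finite dimensionality of each weight piece is already visible from Table~\ref{tab:c}, and $\mathcal{K}_{0,0}=\Q$ is automatic. Thus $\mathcal{K}$ satisfies the hypotheses of Proposition~\ref{prop:wp}, and so
\[
c(t) = \prod_{w=1}^\infty (1-t^w)^{-\chi^{\mathrm{pr}}_w(\mathcal{K})}.
\]

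Next I would check that $\chi^{\mathrm{pr}}_w(\mathcal{K})$ vanishes for all odd $w$. Any generator in $\mathfrak{c}_g(k) \cong S^{k+2}H_\Q$ lives inside $H_\Q^{\otimes(k+2)}$, so a chain in $C_*^{(w)}(\mathfrak{c}_g^+)$ of exterior type $(i_1,\ldots,i_w)$ with $\sum j\, i_j = w$ sits inside $H_\Q^{\otimes N}$ with $N = w + 2(i_1+i_2+\cdots)$. Since $\mathrm{Sp}$-invariants of an odd tensor power of the standard representation vanish, only even weights contribute to $\mathcal{K}$, hence to its primitive part. Consequently the infinite product reduces to a product over even weights $w=2n-2$, $n\geq 2$.

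Finally, I would apply Theorem~\ref{thm:kc} with the reindexing $n\mapsto n-1$ to get $PH_k(\mathfrak{c}^+_\infty)^{\mathrm{Sp}}_{2n-2} \cong H_k(G_*^{(n)})$ for $n\geq 2$. Taking Euler characteristics with the common sign convention $(-1)^k$ gives the identification $\chi^{\mathrm{pr}}_{2n-2}(\mathcal{K}) = e(G_*^{(n)})$, and substituting into the product formula above yields the stated expression for $c(t)$.

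There is no serious obstacle here: the work is already done by Proposition~\ref{prop:wp} and Theorem~\ref{thm:kc}, and the only delicate point is the bookkeeping check that the homological degree used to define $\chi^{\mathrm{pr}}_w$ in Proposition~\ref{prop:wp} agrees with the degree used in the Euler characteristic $e(G_*^{(n)})$, so that no unexpected sign intervenes when equating the two primitive Euler characteristics.
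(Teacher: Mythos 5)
Your proposal is correct and follows essentially the same route as the paper, whose proof of Proposition \ref{prop:ct} is precisely the combination of Theorem \ref{thm:kc} with the product formula of Proposition \ref{prop:wp}. The extra details you supply (freeness from Kontsevich's theorem, vanishing of odd-weight invariants via odd tensor powers of $H_\Q$, and the reindexing $2n\mapsto 2n-2$) are accurate elaborations of what the paper leaves implicit.
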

\begin{proof}
This follows from Theorem \ref{thm:kc} and Proposition \ref{prop:wp}.
\end{proof}

\begin{proof}[Proof of Theorem $\ref{thm:chip}\ \mathrm{(i)}$]
By Theorem \ref{thm:chi} (i), we see that the weight generating function
$$
c(t)=\sum_{w=0}^\infty \chi(H_*(\mathfrak{c}^+_\infty)^{\mathrm{Sp}}_w)t^w
$$
for $H_*(\mathfrak{c}^+_\infty)^{\mathrm{Sp}}$, up to weight $20$, is given by
$$
c(t)=1+t^2+2t^4+3t^6+6t^8+8t^{10}+14t^{12}+20t^{14}+32t^{16}+44t^{18}+68t^{20}+\cdots.
$$
By applying Proposition \ref{prop:wc}, we can inductively determine the integral Euler characteristics of the
primitive parts, namely $e(G_*^{(n)})$. 
If we put
\begin{align*}
\bar{c}(t)=(1-t^2)^{-1}&(1-t^4)^{-1}(1-t^6)^{-1}(1-t^8)^{-2}\\
&(1-t^{10})^{-1}(1-t^{12})^{-2}(1-t^{14})^{-2}(1-t^{16})^{-2}(1-t^{18})^{-1}(1-t^{20})^{-3},
\end{align*}
then we see that
$$
c(t)-\bar{c}(t)\equiv 0\,  
%\mathrm{mod}\ 
\bmod t^{21}.
$$
By Proposition \ref{prop:ct}, we can now conclude that $e(G_*^{(n)})=1,1,1,2,1,2,2,2,1,3$ for $n=2,3,\ldots,11$, respectively.
The result is depicted in the fourth row of Table \ref{tab:cn}.
\end{proof}

\begin{table}[h]
\caption{$\text{Numbers of new generators for $H_*(\mathfrak{c}^+_\infty)^{\mathrm{Sp}}_w$}$}
\begin{center}
\begin{tabular}{|c|r|r|r|r|r|r|r|r|r|r|}
\noalign{\hrule height0.8pt}
\hline 
$w$ & $2$ & $4$ & $6$ & $8$ & $10$ & $12$ & $14$ & $16$  & $18$  & $20$\\
\hline
$\chi$ & $1$ & $2$ & $3$ & $6$ & $8$ & $14$ & $20$ & $32$  & $44$   & $68$ \\
\hline
$\text{$\chi$ of lower terms}$ & $0$ & $1$ & $2$ & $4$ & $7$ & $12$ & $18$ & $30$  & $43$   & $65$ \\
\hline
$\text{$\chi$ of primitive part}$ & $1$ & $1$ & $1$ & $2$ & $1$ & $2$ & $2$ & $2$  & $1$    & $3$\\
\hline
$\mathrm{dim}\,\mathcal{A}(\emptyset)^{(w)}$ & $1$ & $2$ & $3$ & $6$ & $9$ & $16$ & $25$ & $42$  & $65$   & $105$ \\
\hline
$\text{generators for $\mathcal{A}(\emptyset)$}$ & $1$ & $1$ & $1$ & $2$ & $2$ & $3$ & $4$ & $5$  & $6$   & $8$ \\
\hline
$\text{$\chi$ of primitive part of $\mathcal{E}$}$ & $0$ & $0$ & $0$ & $0$ & $-1$ & $-1$ & $-2$ & $-3$  & $-5$   & $-5$ \\
\noalign{\hrule height0.8pt}
\end{tabular}
\end{center}
\label{tab:cn}
\end{table}

As was already mentioned in the introduction,
there are deep connections of this case with
two important subjects in topology. Namely the theory of {\it finite type} invariants
of homology $3$-spheres as well as $3$-manifolds and the theory of 
characteristic classes of {\it transversely symplectic} foliations.

In \cite{gn} (Theorem 2), Garoufalidis and Nakamura proved the following 
beautiful result. Stably there exists an isomorphism
$$
\mathcal{A}(\emptyset)
\cong \left(\wedge^* (S^3H_\Q)/([4]_{\mathrm{Sp}})\right)^{\mathrm{Sp}}
$$
of graded algebras. Here 
$[4]_{\mathrm{Sp}}=S^4H_\Q\subset \wedge^2 (S^3H_\Q)$ denotes a certain
summand and $([4]_{\mathrm{Sp}})$ denotes the ideal of $\wedge^* (S^3H_\Q)$
generated by it. 
This result is closely related to
the following well-known fact which shows that the top homology groups
of the graph complexes correspond to the finite type
invariants of $3$-manifolds. 
\begin{prop}
There exists an isomorphism
$$
H_{2n-2}(G_*^{(n)})\cong \mathcal{A}(\emptyset)^{(2n-2)}_{\mathrm{conn}}.
$$
\label{prop:ce}
\end{prop}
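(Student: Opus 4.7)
The plan is to upgrade the vector-space isomorphism $H_{2k}(\mathfrak{c}^+_\infty)^{\mathrm{Sp}}_{2k} \cong \mathcal{A}(\phi)^{(2k)}$ established in the paragraphs immediately preceding the statement into an isomorphism of bigraded algebras, and then to restrict to primitive elements. Theorem \ref{thm:kc} will identify the primitive part on the Lie-algebra side with $H_{2n-2}(G_*^{(n)})$, while the polynomial structure of $\mathcal{A}(\phi)$ recalled in the introduction will identify the primitive part on the diagrammatic side with $\mathcal{A}^{(2n-2)}_{\mathrm{conn}}$.

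First I would assemble the individual isomorphisms, as $k$ varies, into a single isomorphism
$$
\bigoplus_{k\ge 0} H_{2k}(\mathfrak{c}^+_\infty)^{\mathrm{Sp}}_{2k}\;\cong\;\mathcal{A}(\phi)
$$
of bigraded algebras. The left-hand side inherits the Pontryagin product from the graded-commutative Hopf algebra $H_*(\mathfrak{c}^+_\infty)^{\mathrm{Sp}}$, and on the top-weight piece this product is induced from the wedge product on $\wedge^*(S^3 H_\Q)^{\mathrm{Sp}}$. The cokernel description produced above identifies this piece with the Sp-invariants of $\wedge^*(S^3 H_\Q)/([4]_{\mathrm{Sp}})$, which by the Garoufalidis-Nakamura theorem is precisely $\mathcal{A}(\phi)$ as a graded algebra. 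Hence both multiplications descend from the same wedge product modulo the same ideal, so the identification is multiplicative.

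Second I would apply Theorem \ref{thm:kc} with the weight index chosen so that the top homological degree $2n-2$ coincides with the weight, giving
$$
PH_{2n-2}(\mathfrak{c}^+_\infty)^{\mathrm{Sp}}_{2n-2} \cong H_{2n-2}(G_*^{(n)}).
$$
On the $\mathcal{A}(\phi)$ side, since Le and Garoufalidis-Ohtsuki have shown that $\mathcal{A}(\phi)$ is a polynomial algebra generated by the connected pieces $\mathcal{A}^{(2m-2)}_{\mathrm{conn}}$, its subspace of primitive elements in degree $2n-2$ is exactly $\mathcal{A}^{(2n-2)}_{\mathrm{conn}}$. Restricting the algebra isomorphism to primitives in that degree yields the proposition.

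The main obstacle is to verify that the cokernel-built identification intertwines the two multiplicative structures, that is, that the Pontryagin product on the Lie-algebra homology side genuinely corresponds, under the Garoufalidis-Nakamura isomorphism, to the diagrammatic product on $\mathcal{A}(\phi)$. Once this Hopf-algebra compatibility is pinned down, the passage to primitives is formal and the stated isomorphism follows immediately.
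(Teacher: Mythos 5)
Your argument is correct and follows essentially the same route as the paper: the top-weight identification $H_{2k}(\mathfrak{c}^+_\infty)^{\mathrm{Sp}}_{2k}\cong\mathcal{A}(\phi)^{(2k)}$ coming from the cokernel description and the Garoufalidis--Nakamura theorem, followed by passage to primitives, with Theorem \ref{thm:kc} giving $H_{2n-2}(G_*^{(n)})$ on the Lie-algebra side and the polynomial structure of $\mathcal{A}(\phi)$ giving $\mathcal{A}^{(2n-2)}_{\mathrm{conn}}$ on the diagrammatic side. Your explicit verification that the identification is multiplicative (both products descending from the wedge product on $\wedge^*(S^3H_\Q)^{\mathrm{Sp}}$ modulo the ideal generated by $[4]_{\mathrm{Sp}}$) merely makes precise the step the paper leaves implicit in the phrase ``restricting to the primitive part.''
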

Here we derive this fact from the above theorem of 
Garoufalidis and Nakamura, in the framework of this paper,
just for completeness.
In the chain complex computing $H_*(\mathfrak{c}^+_g)_2$
(the weight $2$ part), the boundary operator 
is the Poisson bracket
$$
\wedge^2(S^3H_\Q) \overset{\partial}{\longrightarrow} S^4H_\Q
$$
which is easily seen to be {\it surjective}. Passing to the dual, the cochain complex computing $H_c^*(\widehat{\mathfrak{c}}^+_g)_2$
is
$$
S^4H_\Q\overset{\delta}{\longrightarrow} \wedge^2(S^3H_\Q) 
$$
which is easily seen to be {\it injective}. 
Here recall that any finite dimensional $\mathrm{Sp}$-module
is canonically isomorphic to its dual module. 
Since the multiplicity of $S^4 H_\Q$
in $\wedge^2(S^3H_\Q)$ is one as already pointed out in \cite{gn}, this is the
same summand as above.
By the definition of the Lie algebra cohomology, we can now conclude that the final part of the
cochain complex computing $H_c^*(\widehat{\mathfrak{c}}^+_g)^{\mathrm{Sp}}_{2k}$
is
$$
\cdots \rightarrow 
\left(S^4H_\Q\otimes \wedge^{2k-2}(S^3H_\Q)\right)^{\mathrm{Sp}}
\xrightarrow{\wedge (\delta\otimes\mathrm{id})}
\left(\wedge^{2k}(S^3H_\Q)\right)^{\mathrm{Sp}}\rightarrow 0
$$
where the last non-trivial homomorphism can be identified with $\delta\otimes\mathrm{id}$ followed 
by the wedge product $\wedge$. This is because $\delta(\alpha\wedge\beta)=(\delta \alpha)\wedge\beta
+(-1)^{\mathrm{deg}\,\alpha}\alpha\wedge(\delta\beta)$ in general and
$\delta \beta=0$ for any $\beta\in S^3H_\Q$ in the present case.
Now the top cohomology group $H_c^{2k}(\widehat{\mathfrak{c}}^+_g)^{\mathrm{Sp}}_{2k}$ is the cokernel of the above homomorphism
and clearly the image of $\wedge(\delta\otimes\mathrm{id})$ coincides with the ideal
$\left(S^4H_\Q\right)$. One can now apply the result of Garoufalidis and Nakamura above
to obtain an isomorphism
$$
H_c^{2k}(\widehat{\mathfrak{c}}^+_g)^{\mathrm{Sp}}_{2k}\cong \mathcal{A}(\emptyset)^{(2k)}.
$$
By passing to the dual, this yields an isomorphism
$$
H_{2k}(\mathfrak{c}^+_g)^{\mathrm{Sp}}_{2k}\cong \mathcal{A}(\emptyset)^{(2k)}
$$
of the top homology group. Proposition \ref{prop:ce} follows from this
by restricting to the primitive part.

We can now conclude that there exists an injective homomorphism
$$
\mathcal{A}(\emptyset)\rightarrow H_*(\mathfrak{c}^+_\infty)^{\mathrm{Sp}}
$$
and let us consider $\mathcal{A}(\emptyset)$ as a subalgebra of $H_*(\mathfrak{c}^+_\infty)^{\mathrm{Sp}}$.

\begin{definition}
Let $\mathcal{A}^+(\emptyset)$ denote the subalgebra of $\mathcal{A}(\emptyset)$  
consisting of all the elements with {\it positive} degrees
and let $\mathcal{I}(\mathcal{A}^+(\emptyset))$ denote the ideal of $H_*(\mathfrak{c}^+_\infty)^{\mathrm{Sp}}$
generated by $\mathcal{A}^+(\emptyset)$.
Now set
$$
\mathcal{E}=H_*(\mathfrak{c}^+_\infty)^{\mathrm{Sp}}/\mathcal{I}(\mathcal{A}^+(\emptyset))
$$
which is a free graded commutative algebra with respect to the degree. It is also equipped with the
second grading induced by the weights. 
\label{def:e}
\end{definition}

By the definition, clearly we have an isomorphism
$$
H_*(\mathfrak{c}^+_\infty)^{\mathrm{Sp}}\cong \mathcal{A}(\emptyset)\otimes \mathcal{E}
$$
of bigraded algebras.

\begin{proof}[Proof of Theorem $\ref{thm:ce}$]
Although the structure of the polynomial algebra
$
\mathcal{A}(\emptyset)
$
is far from being understood,
it is known that the numbers of generators for this algebra are $1,1,1,2,2,3,4,5,6,8,9$ for degrees $w=2,4,\ldots, 22$ (see \cite{ohe})
and the generating function of this algebra is
$$
\phi(t)=1+t^2+2t^4+3t^6+6t^8+9t^{10}+16t^{12}+25t^{14}+42t^{16}+65t^{18}+105t^{20}+161t^{22}+\cdots.
$$
We write these values in the fifth and the sixth rows of Table \ref{tab:cn}.
Then by subtracting the sixth row from the fourth row of Table \ref{tab:cn},
we can determine the
first several terms of the weight generating function 
for the bigraded algebra $\mathcal{E}$
to be
$$
e(t)=1-t^{10}-t^{12}-2 t^{14}-3t^{16}-5t^{18}-5t^{20}+\cdots.
$$
Of course we should have the identity $c(t)=\phi(t) e(t)$ which is easy to check.
This completes the proof.
\end{proof}

In the framework of our bigraded algebra $\mathcal{E}$, the results of Gerlits \cite{ge} (Theorem 4.1.)
as well as Conant, Gerlits and Vogtmann \cite{cgv} (Theorem 5.1.)
can be described as follows. Namely, $\mathcal{E}_w=0$ for all $w=2,\ldots,8$ and
$\mathcal{E}_{10}\cong \Q, \mathcal{E}_{12}\cong \Q$ 
are spanned by certain elements in 
$PH_7(\mathfrak{c}^+_\infty)^{\mathrm{Sp}}_{10}$ and
$PH_9(\mathfrak{c}^+_\infty)^{\mathrm{Sp}}_{12}$, respectively.

\begin{conj}
The free graded algebra $\mathcal{E}$
is {\it infinitely} generated. Furthermore there exist infinitely many
generators with {\it odd} degrees.
\label{conj:e}
\end{conj}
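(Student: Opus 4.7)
The conjecture claims that $\mathcal{E}$ has infinitely many generators, with infinitely many in odd degrees. My plan combines (a) extending the numerical computations of $\chi^{\mathrm{pr}}_w(\mathcal{E})$ beyond weight $20$ with (b) a geometric construction producing explicit odd-degree non-trivalent graph homology classes at arbitrarily large weight.

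First, I would push the computer calculations that produced Tables \ref{tab:c} and \ref{tab:cn} to larger weights. The sequence $\chi^{\mathrm{pr}}_w(\mathcal{E}) = 0,0,0,0,-1,-1,-2,-3,-5,-5$ already forces, via Proposition \ref{prop:wp} (more precisely the interpretation of $\chi^{\mathrm{pr}}_w$ as even-degree generators minus odd-degree generators), at least one odd-degree primitive generator at each of the weights $w = 10,12,\ldots,20$. If the sequence can be shown to be negative for infinitely many $w$, then infinite odd-degree generation, and hence infinite generation, is immediate. Given the strict downward trend so far, it is very plausible that simply extending the computation another ten or twenty weights exhibits a clear asymptotic pattern that can then be confirmed theoretically, perhaps by a generating-function manipulation applied to $c(t) = \phi(t) e(t)$ using what is known about Kontsevich's polynomial algebra $\mathcal{A}(\phi)$.

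Second, for a rigorous argument rather than a heuristic, I would exploit the interpretation developed in Section \ref{sec:c}: by Proposition \ref{prop:ce} and Theorem \ref{thm:ce}, elements of $P\mathcal{E}$ correspond dually to exotic stable leaf cohomology classes of transversely symplectic foliations, and also to non-trivalent graph homology classes. Following the strategy of Gerlits \cite{ge} and Conant--Gerlits--Vogtmann \cite{cgv}, who produced the classes at weights $10$ and $12$, I would search for an infinite family indexed by some natural combinatorial parameter, for example by inserting bivalent strings into a fixed trivalent cycle, or by iterating a stabilization operation, and verify that the resulting classes descend to nontrivial elements of the quotient $\mathcal{E}$. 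The symplectic representation theory and Adams-operation techniques of Sections \ref{sec:irrep} and \ref{sec:di} should in principle give a finite certificate for the nontriviality of each individual member of the family.

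The main obstacle is that ruling out cancellations inside $\mathcal{I}(\mathcal{A}^+(\phi))$ is genuinely hard: a candidate odd-degree non-trivalent cycle may well be cohomologous, modulo $\mathrm{(AS)}$ and $\mathrm{(IHX)}$, to a decomposable product of trivalent classes from $\mathcal{A}(\phi)$. One needs a geometric or representation-theoretic invariant that vanishes on the trivalent subalgebra yet is nontrivial on the proposed family; at present no such invariant is known beyond low weight. Moreover, the negative Euler-characteristic count does not by itself detect an \emph{odd}-degree generator once both even and odd primitive pieces are present, so the character computation must be supplemented by a degree-sensitive device. I expect that overcoming these two points is the heart of the conjecture and will require either a new exotic characteristic class for transversely symplectic foliations in the spirit of \cite{gkf}\cite{metoki}\cite{km}, or a trace-type construction on the commutative side analogous to Morita's trace maps in the Lie case.
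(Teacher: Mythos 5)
This statement is stated in the paper as an open conjecture: the paper proves (Theorem \ref{thm:ce}, via the Euler characteristic computations of Table \ref{tab:cn}) only that odd-degree primitive generators of $\mathcal{E}$ exist in each of the finitely many weights $w=10,12,\ldots,20$, and it offers no proof of infinite generation; indeed it immediately poses the problem of constructing explicit cycles in $P\mathcal{E}$. Your text is, by its own admission, a research plan rather than a proof, and as a proof it has an unavoidable gap on both fronts. First, no finite extension of the computations of $\chi^{\mathrm{pr}}_w(\mathcal{E})$ can establish that infinitely many weights carry generators; and the proposed ``generating-function manipulation'' of $c(t)=\phi(t)e(t)$ would require knowing both the graph homology Euler characteristics and the dimensions of $\mathcal{A}(\phi)$ in all weights, whereas the paper emphasizes that the structure of $\mathcal{A}(\phi)$ is itself far from understood (only the first eleven numbers of generators are known). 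So the asymptotic negativity of $\chi^{\mathrm{pr}}_w(\mathcal{E})$ is exactly as conjectural as the statement you are trying to prove.

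Second, the geometric half of your plan --- an infinite family of non-trivalent cycles obtained by inserting bivalent strings or iterating a stabilization, shown to survive in the quotient $\mathcal{E}=H_*(\mathfrak{c}^+_\infty)^{\mathrm{Sp}}/\mathcal{I}(\mathcal{A}^+(\phi))$ --- is precisely what is missing, and you correctly identify why it is hard: one needs an invariant that vanishes on the trivalent subalgebra $\mathcal{A}(\phi)$ but detects the candidate classes, and none is known beyond the low-weight cases of Gerlits and Conant--Gerlits--Vogtmann. One small clarification on your numerical remark: in the range $w\le 20$ the negative values of $\chi^{\mathrm{pr}}_w(\mathcal{E})$ do force at least one odd-degree generator in each such weight (this is exactly how Theorem \ref{thm:ce} is deduced), but as you note this device only bounds the difference of odd and even generators, so even a hypothetical proof that $\chi^{\mathrm{pr}}_w(\mathcal{E})$ is eventually negative would give infinitely many odd generators but could not by itself control the even-degree part or yield finer structural information. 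In short, your proposal is a reasonable program aligned with the paper's own discussion, but it does not prove the conjecture, and the paper does not either.
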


\begin{problem}
Construct explicit cycles lying in $P\mathcal{E}$.
\end{problem}

Next we describe the connection of the commutative case with the theory of
characteristic classes of transversely symplectic foliations.
Let $\mathfrak{ham}_{2g}$ denote the Lie algebra consisting of all the formal Hamiltonian
vector fields on $\R^{2g}$ with respect to the standard symplectic form. In \cite{kontsevich3},
Kontsevich considered two Lie subalgebras
$$
\mathfrak{ham}^1_{2g}\subset  \mathfrak{ham}^0_{2g}\subset \mathfrak{ham}_{2g}
$$
where $\mathfrak{ham}^0_{2g}$ and  $\mathfrak{ham}^1_{2g}$ denote the Lie subalgebra consisting of formal Hamiltonian vector fields without constant terms and  without constant as well as
linear terms, respectively. He gave a geometric meaning to the Gelfand-Fuks cohomology
$$
H_{GF}^*(\mathfrak{ham}^0_{2g},\mathrm{Sp}(2g,\R))\cong 
H^*_{GF}(\mathfrak{ham}^1_{2g})^{\mathrm{Sp}}
$$
as follows. Let $\mathcal{F}$ be a transversely symplectic foliation on a smooth manifold
$M$ of codimension $2g$ and let $H^*_{\mathcal{F}}(M)$ be the associated foliated cohomology group.
Then he constructed a homomorphism
$$
H^*_{GF}(\mathfrak{ham}^1_{2g})^{\mathrm{Sp}}\rightarrow H^*_{\mathcal{F}}(M).
$$
Now it is easy to see that the Lie algebras $\mathfrak{ham}^0_{2g}, \mathfrak{ham}^1_{2g}$
are nothing other than the completions of $\mathfrak{c}_g\otimes\R, \mathfrak{c}^+_g\otimes\R$
with respect to the natural gradings so that
we can write
$$
\mathfrak{ham}^0_{2g}=\widehat{\mathfrak{c}}_g\otimes\R,\quad
\mathfrak{ham}^1_{2g}=\widehat{\mathfrak{c}}^+_g\otimes\R.
$$
It follows that we have a homomorphism
$$
H^*_c(\widehat{\mathfrak{c}}^+_g)^{\mathrm{Sp}}\otimes\R\cong
H^*_{GF}(\mathfrak{ham}^1_{2g})^{\mathrm{Sp}}\rightarrow H^*_{\mathcal{F}}(M)
$$
for any transversely symplectic foliation $(M,\mathcal{F})$. 
Let $\mathfrak{c}^+_g\rightarrow \mathfrak{c}_g(1)=S^3H_\Q$ be the projection.
Then the composition
$$
H^*(S^3H_\Q)^{\mathrm{Sp}}\rightarrow H^*_c(\widehat{\mathfrak{c}}^+_g)^{\mathrm{Sp}}\otimes\R\rightarrow H^*_{\mathcal{F}}(M)
$$
produces the {\it usual} leaf cohomology classes in the sense that they are expressed
by differential forms involving only the connection form and the curvature form including
the Pontrjagin forms. It follows that our bigraded algebra $\mathcal{E}$ can be interpreted as the dual
of the space of all the {\it exotic} stable leaf cohomology classes, as already mentioned
in the introduction.

\begin{problem}
Study the geometric meaning of the classes in $\mathcal{E}$ in the context of
universal characteristic classes for {\it odd} dimensional manifold bundles
as well as characteristic classes for transversely symplectic foliations.
\end{problem}

\section{The case of $\mathfrak{h}_{g,1}$ and the outer automorphism groups of free groups}\label{sec:l}

Next we consider the Lie case.
The result of our computation for this case
is depicted in Table \ref{tab:h}. 

\begin{table}[h]
\caption{$\text{Case of $\mathfrak{h}_{g,1}$}$}
%\label{tab:im}
\begin{center}
\begin{tabular}{|c|r|r|r|r|r|r|r|r|r|}
\noalign{\hrule height0.8pt}
\hline
$w$ & $2$ & $4$ & $6$ & $8$ & $10$ & $12$ & $14$ & $16$  & $18$ \\
\hline
$C_1$ & $1$ & $0$ & $5$ & $3$ & $108$ & $650$ & $8817$ & $111148$ & $1729657$\\
\hline
$C_2$ & $2$ & $0$ & $10$ & $66$ & $580$ & $6621$ & $84756$ & $1281253$ & $21671535$\\
\hline
$C_3$ & {} & $6$ & $7$ & $239$ & $1928$ & $29219$ & $424358$ & $7286710$ & $137344661$\\
\hline
$C_4$ & {} & $8$ & $16$ & $342$ & $4946$ & $78443$ & $1400274$ & $27097563$ & $575398310$\\
\hline
$C_5$ & {} & {} & $41$ & $293$ & $8375$ & $152310$ & $3289532$ &$73457788$  & $1766236662$\\
\hline
$C_6$ & {} & {} & $31$ & $287$ & $8887$ & $227058$ & $5780112$ & $152604335$ & $4190265424$\\
\hline
$C_7$ & {} & {} & {} & $294$ & $6536$ & $254063$ & $7885801$ & $249166200$ & $7923956179$\\
\hline
$C_8$ & {} & {} & {} & $140$ & $4175$ & $206753$ & $8491679$ & $324662115$ & $12158481555$\\
\hline
$C_9$ & {} & {} & {} & {} & $2353$ & $123990$ & $7160718$ & $340745360$ & $15284159637$\\
\hline
$C_{10}$ & {} & {} & {} & {} & $722$ & $58302$ & $4634679$ & $288478215$ & $15809478819$\\
\hline
$C_{11}$ & {} & {} & {} & {} & {} & $21368$ & $2269538$ & $195270880$ & $13456339409$\\
\hline
$C_{12}$ & {} & {} & {} & {} & {} & $4439$ & $836620$ & $103755671$ & $7824793027$\\
\hline
$C_{13}$ & {} & {} & {} & {} & {} &  {} & $221987$ & $42207231$ & $5290518430$\\
\hline
$C_{14}$ & {} & {} & {} & {} & {} &  {} & $32654$ & $12701040$ & $2368530727$\\
\hline
$C_{15}$ & {} & {} & {} & {} & {} &  {} &  {} & $2624381$ & $816469677$\\
\hline
$C_{16}$ & {} & {} & {} & {} & {} &  {} &  {} & $289519$ & $206593733$\\
\hline
$C_{17}$ & {} & {} & {} & {} & {} &  {} &  {} &  {} & $34966981$\\
\hline
$C_{18}$ & {} & {} & {} & {} & {} &  {} &  {} &  {}  & $3054067$\\
\hline
$\text{total}$ & $3$ & $14$ & $110$ & $1664$ & $38610$ & $1163216$ & $42521525$ & $1821739409$ & $89423442490$\\
\hline
$\chi$ & $1$ & $2$ & $4$ & $6$ & $10$ & $16$ & $23$ & $13$ & $-96$\\
\noalign{\hrule height0.8pt}
\end{tabular}
\end{center}
\label{tab:h}
\end{table}

\begin{proof}[Proof of Theorem $\ref{thm:chi}\ \mathrm{(ii)}$]
This follows from Table \ref{tab:h}.
\end{proof}

\begin{thm}[Kontsevich {\cite{kontsevich1, kontsevich2}}]\label{thm:kontsevich-h}
For any $k \ge 1$ and $n \ge 1$, there exists an isomorphism 
\[PH_k (\mathfrak{h}^+_{\infty,1})^{\mathrm{Sp}}_{2n}
\cong 
H^{2n-k} (\mathrm{Out}\,F_{n+1};\Q).\]
\label{thm:kh}
\end{thm}

\begin{prop}
The weight generating function, denoted by $h(t)$, for the
$\mathrm{Sp}$-invariant stable homology group $H_*(\mathfrak{h}^+_{\infty,1})^{\mathrm{Sp}}$
is given by
$$
h(t)=\prod_{n=2}^\infty (1-t^{2n-2})^{- e(\mathrm{Out}\, F_n)}
$$
where $e(\mathrm{Out}\, F_n)$ denotes the {\it integral} Euler characteristic of $\mathrm{Out}\, F_n$.
\label{prop:ht}
\end{prop}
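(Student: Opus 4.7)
The plan is to apply Proposition \ref{prop:wp} to the bigraded algebra $\mathcal{K}=H_*(\mathfrak{h}^+_{\infty,1})^{\mathrm{Sp}}$, bigraded by homological degree and weight. To invoke Proposition \ref{prop:wp} I first need to know that $\mathcal{K}$ is a free graded commutative algebra in the homological degree. This follows from the Milnor--Moore/Hopf theorem: the chain complex of any Lie algebra is naturally a differential graded Hopf algebra, so $H_*(\mathfrak{h}_{\infty,1})$ is a connected graded commutative Hopf algebra over $\Q$. Taking $\mathrm{Sp}$-invariants of the tensor decomposition
$H_*(\mathfrak{h}_{\infty,1}) \cong H_*(\mathfrak{sp}(\infty,\Q)) \otimes H_*(\mathfrak{h}^+_{\infty,1})^{\mathrm{Sp}}$
established in Section \ref{sec:p} keeps the relevant factor Hopf, hence free graded commutative on its primitive part $P\mathcal{K}$.

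Second, I would identify the primitive Euler characteristic $\chi^{\mathrm{pr}}_w(\mathcal{K})$ weight by weight. Since $\mathfrak{sp}(\infty,\Q)=\mathfrak{h}_{\infty,1}(0)$ lives in weight $0$, the primitives of $H_*(\mathfrak{h}_{\infty,1})$ at any positive weight coincide with those of $\mathcal{K}$. Theorem \ref{thm:kh} then says the primitive part of $\mathcal{K}$ is concentrated in \emph{even} weights $w=2n$, so $\chi^{\mathrm{pr}}_w(\mathcal{K})=0$ for odd $w$, while for $w=2n$ with $n\geq 1$,
$$
\chi^{\mathrm{pr}}_{2n}(\mathcal{K})=\sum_{k}(-1)^k\dim PH_k(\mathfrak{h}^+_{\infty,1})^{\mathrm{Sp}}_{2n}
=\sum_{k}(-1)^k\dim H^{2n-k}(\mathrm{Out}\,F_{n+1};\Q).
$$
Substituting $j=2n-k$ and using that $(-1)^{2n-j}=(-1)^j$ since $2n$ is even, this evaluates to $e(\mathrm{Out}\,F_{n+1})$.

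Finally, feeding these values into the product formula of Proposition \ref{prop:wp} and reindexing $n\mapsto n-1$ yields
$$
h(t)=\prod_{n=1}^{\infty}(1-t^{2n})^{-e(\mathrm{Out}\,F_{n+1})}=\prod_{n=2}^{\infty}(1-t^{2n-2})^{-e(\mathrm{Out}\,F_n)},
$$
which is the asserted identity. The main point I expect to have to verify carefully is the Hopf-algebra hypothesis: namely, that the $\mathrm{Sp}$-equivariant coproduct on $H_*(\mathfrak{h}_{\infty,1})$ restricts to a coproduct on the $\mathrm{Sp}$-invariant stable homology, and that the primitive contributions of $H_*(\mathfrak{sp}(\infty,\Q))$ sit entirely in weight $0$, so as to be absorbed into the constant term of $h(t)$. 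Once this structural input is in place, the parity computation above makes the identification of $\chi^{\mathrm{pr}}_{2n}(\mathcal{K})$ with the integral Euler characteristic $e(\mathrm{Out}\,F_{n+1})$ straightforward.
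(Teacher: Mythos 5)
Your proposal is correct and follows exactly the paper's route: the paper proves Proposition \ref{prop:ht} by combining Kontsevich's isomorphism (Theorem \ref{thm:kh}) with the product formula of Proposition \ref{prop:wp}, which is precisely what you do, merely spelling out the freeness (Hopf-algebra) hypothesis and the parity bookkeeping that identifies $\chi^{\mathrm{pr}}_{2n}$ with $e(\mathrm{Out}\,F_{n+1})$. No gaps.
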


\begin{proof}
This follows from Theorem \ref{thm:kh} and Proposition \ref{prop:wp}.
\end{proof}

\begin{proof}[Proof of Theorem $\ref{thm:chip}\ \mathrm{(ii)}$]
By Theorem \ref{thm:chi} (ii), we see that the weight generating function
$$
h(t)=\sum_{w=0}^\infty \chi(H_*(\mathfrak{h}^+_{\infty,1})^{\mathrm{Sp}}_w)t^w
$$
for $H_*(\mathfrak{h}^+_{\infty,1})^{\mathrm{Sp}}$, up to weight $18$, is given by
$$
h(t)=1+t^2+2t^4+4t^6+6t^8+10t^{10}+16t^{12}+23t^{14}+13t^{16}-96t^{18}+\cdots.
$$
By applying Proposition \ref{prop:wc}, we can inductively determine the Euler characteristics of the
primitive parts, namely $e(\mathrm{Out}\, F_n)$. 
If we put
\begin{align*}
\bar{h}(t)=(1-t^2)^{-1}(1-t^4)^{-1}&(1-t^6)^{-2}(1-t^8)^{-1}\\
&(1-t^{10})^{-2}(1-t^{12})^{-1}(1-t^{14})^{-1}(1-t^{16})^{21}(1-t^{18})^{124},
\end{align*}
then we see that
$$
h(t)-\bar{h}(t)\equiv 0 
% \mathrm{mod}\ 
\bmod t^{19}.
$$
By Proposition \ref{prop:ht}, we can now conclude that $e(\mathrm{Out}\, F_n)=1,1,2,1,2,1,1,-21,-124$ for $n=2,3,\ldots,10$, respectively.
The result is depicted in the fourth row of Table \ref{tab:hn}.
\end{proof}

\begin{table}[h]
\caption{$\text{Numbers of new generators for $H_*(\mathfrak{h}^+_{\infty,1})^{\mathrm{Sp}}_w$}$}
\begin{center}
\begin{tabular}{|c|r|r|r|r|r|r|r|r|r|}
\noalign{\hrule height0.8pt}
\hline
 $w$ & $2$ & $4$ & $6$ & $8$ & $10$ & $12$ & $14$ & $16$ & $18$\\
\hline
$\chi$ & $1$ & $2$ & $4$ & $6$ & $10$ & $16$ & $23$ & $13$  & $-96$\\
\hline
$\text{$\chi$ of lower terms}$ & $0$ & $1$ & $2$ & $5$ & $8$ & $15$ & $22$ & $34$  & $28$\\
\hline
$\text{$\chi$ of primitive part}$ & $1$ & $1$ & $2$ & $1$ & $2$ & $1$ & $1$ & $-21$  & $-124$\\
\noalign{\hrule height0.8pt}
\end{tabular}
\end{center}
\label{tab:hn}
\end{table}

Thus we see that there are many {\it odd} dimensional non-trivial rational
cohomology classes of $\mathrm{Out}\,F_9$ as well as $\mathrm{Out}\,F_{10}$.
Before this result, very few results have been known about the rational cohomology 
group of 
$\mathrm{Out}\,F_{n}$.
As for the cases $n\leq 6$, by the works of Hatcher and Vogtmann \cite{hv} 
as well as Ohashi \cite{o}, the only non-trivial cohomology groups
 are
$H^4(\mathrm{Out}\,F_{4};\Q)\cong\Q$ and $H^8(\mathrm{Out}\,F_{6};\Q)\cong\Q$.
On the other hand, by making use of the trace maps introduced in \cite{morita93}
which give a large abelian quotient of $\mathfrak{h}^+_{g,1}$,
the first named author defined many rational homology classes
of $\mathrm{Out}\, F_n$ in \cite{morita99}\cite{morita06},
the most important classes being a series of homology classes
$$
\mu_k\in H_{4k}(\mathrm{Out}\, F_{2k+2};\Q) \quad (k=1,2,\ldots).
$$
It was conjectured in \cite{morita99} that these will be all non-trivial.
However, at present only the first three classes are known to be non-trivial,
$\mu_1$ in \cite{morita99}, $\mu_2$ by Conant and Vogtmann \cite{cov} 
and $\mu_3$ by Gray \cite{gr}. Conant and Vogtmann also gave
a geometric construction of many homology classes in the framework
of the {\it Outer Space} of Culler and Vogtmann \cite{cuv}.

As already mentioned in the introduction, recently, Conant, Kassabov and Vogtmann \cite{ckv} proved 
a remarkable result about the structure of $\mathfrak{h}_{g,1}$.
They found a deep connection with the theory of elliptic modular forms
by which they show the existence of a large new abelianization beyond the trace maps.
In particular, they defined many new cohomology classes 
in $H^2_c(\widehat{\mathfrak{h}}^+_{\infty,1})^{\mathrm{Sp}}_{2w}$ whenever
the dimension of the cusp forms of some weight $w$ is larger than $1$,
the first one being $w=24$.
These classes then produce, by Theorem \ref{thm:kh}, rational homology classes 
of $\mathrm{Out}\, F_n$ the first of which
lies in $H_{46}(\mathrm{Out}\, F_{25};\Q)$.

Now we go back to the case of $\mathrm{Out}\, F_7$ 
which is the unknown case with the smallest rank.
By our result Theorem \ref{thm:chip}\ $\mathrm{(ii)}$,
the Euler characteristic of this group is $1$ and it is an interesting problem
to determine whether the rational cohomology group of this group is trivial or not.
See Problem \ref{prob:st} for this.
Next we consider $\mathrm{Out}\, F_8$. Again by 
Theorem \ref{thm:chip}\ $\mathrm{(ii)}$, $e(\mathrm{Out}\, F_8)=1$.
On the other hand, Gray \cite{gr} proved that $\mu_3\not=0\in H_{12}(\mathrm{Out}\, F_8;\Q)$.
It follows that there exists at least one {\it odd} dimensional rational homology class.
Here we propose a candidate of such a class
in the following proposition ($\gamma_1\in H_{11}(\mathrm{Out}\, F_8;\Q)$ is our candidate).
For this, we use the summands 
$H_1(\mathfrak{h}^+_{\infty,1})_{2k+4}\supset [2k+1,1]_{\mathrm{Sp}}\ (k=1,2,\ldots)$
which are part of the new abelianizations found by 
Conant, Kassabov and Vogtmann cited above. 
By an explicit computation motivated by their result, 
we have proved that $H_1(\mathfrak{h}^+_{g,1})_{6}\cong [31]_{\mathrm{Sp}}$.

\begin{prop}
For any $k\geq 1$, we have an isomorphism
$$
\left([31]_{\mathrm{Sp}}\otimes [2k+1]_{\mathrm{Sp}}\otimes[2k+3]_{\mathrm{Sp}}\right)^{\mathrm{Sp}}
\cong\Q
$$
so that we obtain a series of (co)homology classes
$$
\gamma_k\in PH_c^{3}(\widehat{\mathfrak{h}}_{\infty,1})^{\mathrm{Sp}}_{4k+10}
\overset{\mathrm{Kontsevich}}
{\cong} H_{4k+7}(\mathrm{Out}\,F_{2k+6};\Q) \quad (k=1,2,\ldots).
$$
\end{prop}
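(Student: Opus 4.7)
The plan has two parts: a representation-theoretic identity and the translation of this identity into a cohomology class.

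\emph{Representation theory.} By self-duality of $\mathrm{Sp}$-irreducibles,
$\dim([3,1]_{\mathrm{Sp}}\otimes[2k+1]_{\mathrm{Sp}}\otimes[2k+3]_{\mathrm{Sp}})^{\mathrm{Sp}}$ equals the multiplicity of $[2k+3]_{\mathrm{Sp}}$ in $[3,1]_{\mathrm{Sp}}\otimes[2k+1]_{\mathrm{Sp}}$. In the stable range I would compute this via the Newell--Littlewood formula
$$
N^{\nu}_{[3,1],[2k+1]}\;=\;\sum_{\alpha,\beta,\gamma}c^{[3,1]}_{\alpha,\beta}\,c^{[2k+1]}_{\alpha,\gamma}\,c^{\nu}_{\beta,\gamma}.
$$
Because both $[2k+1]$ and $\nu=[2k+3]$ are single-row partitions, nonvanishing of $c^{[2k+1]}_{\alpha,\gamma}$ and $c^{[2k+3]}_{\beta,\gamma}$ forces each of $\alpha,\beta,\gamma$ to be single-row: $\alpha=[a]$, $\gamma=[2k+1-a]$, $\beta=[a+2]$. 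The constraint $|\alpha|+|\beta|=|[3,1]|=4$ then pins down $a=1$, and the three Littlewood--Richardson coefficients $c^{[3,1]}_{[1],[3]}$, $c^{[2k+3]}_{[3],[2k]}$, $c^{[2k+1]}_{[1],[2k]}$ are each equal to $1$ by Pieri's rule. Hence $N^{[2k+3]}_{[3,1],[2k+1]}=1$, giving the asserted one-dimensionality.

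\emph{Construction of $\gamma_k$.} I would realise $\gamma_k$ as a triple cup product in $H^*_c(\widehat{\mathfrak{h}}_{\infty,1}^+)$. The three ingredients are $\mathrm{Sp}$-isotypic $1$-cocycles obtained by dualising abelianisation summands of $\mathfrak{h}_{\infty,1}^+$: first, the class dual to the isomorphism $H_1(\mathfrak{h}^+_{\infty,1})_6\cong[31]_{\mathrm{Sp}}$ (weight~$6$); second and third, Morita-trace classes of $\mathrm{Sp}$-type $[2k+1]$ and $[2k+3]$ at weights $2k+1$ and $2k+3$ respectively, whose existence amounts to a Kontsevich-module calculation showing that $\mathfrak{h}^+_{\infty,1}(2k+1)$ and $\mathfrak{h}^+_{\infty,1}(2k+3)$ each contain the corresponding $\mathrm{Sp}$-summand. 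The weights sum to $4k+10$, and the triple cup product lies in the $[3,1]\otimes[2k+1]\otimes[2k+3]$-isotypic component of $H^3_c(\widehat{\mathfrak{h}}_{\infty,1}^+)_{4k+10}$. By the first step, the $\mathrm{Sp}$-invariant subspace of this component is one-dimensional, and $\gamma_k$ is defined to be the image of this cup product in the primitive part; Kontsevich's Theorem~\ref{thm:kh} then transports $\gamma_k$ into $H_{4k+7}(\mathrm{Out}\,F_{2k+6};\Q)$.

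\emph{Main obstacle.} The combinatorial half is routine Schur-function manipulation. The delicate point is to ensure that $\gamma_k$ survives projection to the \emph{primitive} part $PH^3_c$: any decomposable class in $H^3_c$ factors through $H^1_c\smile H^2_c$, so one must rule out a matching $\mathrm{Sp}$-isotype in $H^2_c$ at every admissible splitting of the weight $4k+10$. A systematic check using the $\mathrm{Sp}$-irreducible data developed in Sections~\ref{sec:irrep}--\ref{sec:di} and Table~\ref{tab:h} should suffice to complete the argument.
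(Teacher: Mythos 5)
Your representation-theoretic computation is correct, but it runs along a slightly different track than the paper's. The paper computes the same multiplicity by decomposing $[2k+1]_{\mathrm{Sp}}\otimes[2k+3]_{\mathrm{Sp}}$ via the $\mathrm{GL}$ Littlewood--Richardson rule into the two-row summands $[4k+4]_{\mathrm{GL}},[4k+3,1]_{\mathrm{GL}},\ldots,[2k+3,2k+1]_{\mathrm{GL}}$ and then observing, via the $\mathrm{GL}\supset\mathrm{Sp}$ branching rule, that only the last summand contains $[31]_{\mathrm{Sp}}$, with multiplicity one; you instead evaluate the single Newell--Littlewood coefficient $N^{[2k+3]}_{[3,1],[2k+1]}$ directly, and the single-row constraints pin it down to the one triple $(\alpha,\beta,\gamma)=([1],[3],[2k])$ with all three LR coefficients equal to $1$. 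Both arguments are valid in the stable range and amount to the same bookkeeping; yours is somewhat more self-contained, while the paper's fits the $\mathrm{GL}$-first strategy it uses systematically elsewhere (Methods (I)--(II)). Your construction of $\gamma_k$ is the one the paper intends: cup the dual of $H_1(\mathfrak{h}^+_{\infty,1})_6\cong[31]_{\mathrm{Sp}}$ with duals of the two trace components of weights $2k+1$ and $2k+3$, contract with the one-dimensional invariant, and transport by Theorem \ref{thm:kh}. One small imprecision: for the trace classes to live in $H^1_c$ it is not enough that $\mathfrak{h}^+_{\infty,1}(2k+1)$ and $\mathfrak{h}^+_{\infty,1}(2k+3)$ contain the summands $[2k+1]_{\mathrm{Sp}},[2k+3]_{\mathrm{Sp}}$; one needs these summands to survive in the abelianization $H_1(\mathfrak{h}^+_{\infty,1})$, which is exactly what Morita's trace maps provide (they vanish on brackets).

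Concerning your ``main obstacle'': the primitivity check you leave open is not part of the paper's proof and is not needed for the statement. Since $H_*(\mathfrak{h}^+_{\infty,1})^{\mathrm{Sp}}$ is free graded commutative, its positive-degree part splits canonically as primitives plus decomposables, so one may simply define $\gamma_k$ to be the primitive component of the invariant class produced by the cup product; the Proposition only asserts that this yields a well-defined element of $PH^3_c(\widehat{\mathfrak{h}}_{\infty,1})^{\mathrm{Sp}}_{4k+10}$. Whether that element (indeed, whether the cup product itself) is non-zero is precisely the conjecture stated immediately after the Proposition, so ruling out cancellation against $H^1_c\smile H^2_c$ belongs to the non-triviality question, not to the proof of this statement.
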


\begin{proof}
By the Littlewood-Richardson rule, it is easy to see that
$$
[2k+1]_{\mathrm{Sp}}\otimes [2k+3]_{\mathrm{Sp}}
\cong 
[4k+4]_{\mathrm{GL}}\oplus [4k+3,1]_{\mathrm{GL}}\oplus \cdots\oplus
[2k+3,2k+1]_{\mathrm{GL}}.
$$
On the other hand, among the ${\mathrm{Sp}}$-irreducible decompositions of the
${\mathrm{GL}}$-irreducible summands on the
right hand side, only the last one $[2k+3,2k+1]_{\mathrm{GL}}$
contains $[31]_{\mathrm{Sp}}$ and the multiplicity is $1$.
The claim follows.
\end{proof}

\begin{conj}
These classes are all non-trivial. In particular, $H_3(\mathfrak{h}_{\infty,1})$ is {\it infinite}
dimensional.
\end{conj}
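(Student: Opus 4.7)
The plan is to establish non-triviality of each $\gamma_k$ in $H^3_c(\widehat{\mathfrak{h}}^+_{\infty,1})^{\mathrm{Sp}}_{4k+10}$ by exhibiting it as a non-vanishing triple cup product of 1-cocycles coming from the abelianization, and then detecting this product either via the Hochschild-Serre spectral sequence of the projection to $H_1(\mathfrak{h}^+)$ or by pairing with an explicit homology cycle. First I would fix the three 1-cocycles: a class $a \in H^1_c(\widehat{\mathfrak{h}}^+_{\infty,1})$ $\mathrm{Sp}$-dual to the $[31]_{\mathrm{Sp}}$ summand of $H_1(\mathfrak{h}^+_{g,1})_6$, together with 1-cocycles $b_k, c_k$ dual respectively to the $[2k+1]_{\mathrm{Sp}}$ and $[2k+3]_{\mathrm{Sp}}$ summands furnished by Morita's trace maps or the Conant-Kassabov-Vogtmann classes. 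Since a 1-cocycle on a Lie algebra is precisely a functional vanishing on the derived ideal, each of $a, b_k, c_k$ pulls back from $H_1(\mathfrak{h}^+)^*$, and the $\mathrm{Sp}$-invariant part of the wedge $a \wedge b_k \wedge c_k$ produces a 3-cocycle which, by the multiplicity-one calculation in the preceding proposition, agrees up to a non-zero scalar with $\gamma_k$. This reduces the conjecture to showing the triple product is non-zero in $H^3_c$.

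The crux is then showing $a \cup b_k \cup c_k \neq 0$ in cohomology. I would factor through the pull-back $\pi^* : H^*(H_1(\mathfrak{h}^+_{\infty,1})) \to H^*_c(\widehat{\mathfrak{h}}^+_{\infty,1})$ from the abelianization, where the source is an exterior algebra in which our triple wedge is visibly non-zero. The kernel of $\pi^*$ in degree 3 is computed by the Hochschild-Serre spectral sequence for $[\mathfrak{h}^+, \mathfrak{h}^+] \hookrightarrow \mathfrak{h}^+ \twoheadrightarrow H_1(\mathfrak{h}^+)$: any obstruction can only appear through a $d_2$-differential landing in $H^2(H_1(\mathfrak{h}^+); H_2([\mathfrak{h}^+,\mathfrak{h}^+])^*)$, and the multiplicity-one property of $([31] \otimes [2k+1] \otimes [2k+3])^{\mathrm{Sp}} \cong \mathbb{Q}$ would pin this obstruction down to a finite-dimensional $\mathrm{Sp}$-isotypic piece. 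Combining parity considerations with the weight grading and with the known low-weight structure of $H_2([\mathfrak{h}^+,\mathfrak{h}^+])$, one would then argue that this piece vanishes.

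A complementary detection route is to construct explicit dual 3-cycles. Via Theorem \ref{thm:kh}, $\gamma_k$ corresponds to a class in $H_{4k+7}(\mathrm{Out}\, F_{2k+6};\mathbb{Q})$, and one can attempt to realise a dual cycle inside the spine of Outer Space, following the Conant-Vogtmann geometric programme, and check that the Kontsevich pairing with $a \wedge b_k \wedge c_k$ is non-degenerate. As a sanity check, the $k=1$ case predicts a class $\gamma_1 \in H_{11}(\mathrm{Out}\, F_8;\mathbb{Q})$, which, combined with Gray's $\mu_3 \in H_{12}(\mathrm{Out}\, F_8;\mathbb{Q})$, would recover the observed Euler characteristic $e(\mathrm{Out}\, F_8) = 1$ from Theorem \ref{thm:chip}(ii), providing strong numerical corroboration at the lowest level.

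The main obstacle, I expect, is the uniform vanishing-of-$d_2$ step across all $k$. The current non-triviality results in this area—Morita's $\mu_1, \mu_2, \mu_3$ and the Conant-Kassabov-Vogtmann classes—each rely on either a geometric model specific to one class, or on the deep arithmetic input of cusp forms; no present method treats infinite families of cohomology classes in $\mathfrak{h}^+_{\infty,1}$ uniformly. A realistic proof therefore requires either (i) a spectral-sequence argument sharp enough to rule out cancellations for every $k$, or (ii) a new structural device, possibly an analogue of modular-form detection, that packages the $\{\gamma_k\}$ into a single algebraic object verifiably non-zero. Establishing even the first case $k = 1$ unconditionally would already yield the first odd-degree rational homology class of an $\mathrm{Out}\, F_n$ and would show $H_3(\mathfrak{h}_{\infty,1}) \neq 0$.
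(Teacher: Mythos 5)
The statement you are trying to prove is labelled a \emph{conjecture} in the paper: the authors give no proof of it, and it remains open. So there is nothing in the paper to compare your argument against; the only question is whether your proposal actually closes the conjecture, and it does not. Your reduction of non-triviality of $\gamma_k$ to the non-vanishing of a triple cup product $a\cup b_k\cup c_k$ of classes pulled back from the abelianization is a reasonable reformulation (it is essentially the construction the authors themselves use, via the multiplicity-one statement $\bigl([31]_{\mathrm{Sp}}\otimes[2k+1]_{\mathrm{Sp}}\otimes[2k+3]_{\mathrm{Sp}}\bigr)^{\mathrm{Sp}}\cong\Q$, to \emph{define} $\gamma_k$ as a cocycle). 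But the decisive step --- showing that this cocycle is not a coboundary, equivalently that the obstructing differentials in the Hochschild--Serre spectral sequence for $[\mathfrak{h}^+,\mathfrak{h}^+]\hookrightarrow\mathfrak{h}^+\twoheadrightarrow H_1(\mathfrak{h}^+)$ vanish on the relevant isotypic piece --- is exactly where the difficulty lives, and you leave it at ``one would then argue that this piece vanishes.'' No such argument is given, and as you yourself note, no existing technique handles this uniformly in $k$; even the single case $k=1$ is not established. (As a smaller point, the kernel of the edge map $H^3$ of the base $\to H^3_c$ is controlled by $d_2\colon E_2^{1,1}\to E_2^{3,0}$ \emph{and} $d_3\colon E_3^{0,2}\to E_3^{3,0}$, not only by a $d_2$ into $H^2(H_1;H_2(\cdot)^*)$ as you state, so the bookkeeping of obstructions also needs correction.)

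Two further cautions. First, your ``numerical corroboration'' from $e(\mathrm{Out}\,F_8)=1$ shows only that \emph{some} odd-degree class must exist in $H_*(\mathrm{Out}\,F_8;\Q)$ given Gray's $\mu_3\neq 0$; it does not single out degree $11$, let alone the specific class $\gamma_1$, so it is consistency rather than evidence. This is precisely why the paper presents $\gamma_1$ only as a \emph{candidate}. Second, the part of your argument that is sound --- that non-triviality of all $\gamma_k$ would force $H_3(\mathfrak{h}_{\infty,1})$ to be infinite dimensional --- works because the $\gamma_k$ lie in distinct weight summands $4k+10$ of the weight decomposition, so they are automatically linearly independent once each is non-zero; it would be worth stating this explicitly. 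In summary: your proposal is a sensible research programme, not a proof, and the conjecture remains open after it.
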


Also if we combine the trace components $[2k+1]_{\mathrm{Sp}}$
with the new components  $[2\ell+1,1]_{\mathrm{Sp}},...$,
we obtain huge amount of (co)homology classes
of $\mathfrak{h}_{\infty,1}$.

Next we consider the problem of comparison between the {\it rational} and
the {\it integral\/} Euler characteristics of $\mathrm{Out}\,F_{n}$.
The second row of Table \ref{tab:chie} 
is taken from Smillie and Vogtmann \cite{sv} where we write the values to the second decimal places
and the third row is our Theorem \ref{thm:chip}\ $\mathrm{(ii)}$.

\begin{table}[h]
\caption{$\text{$\chi$ versus $e$ for $\mathrm{Out}\,F_n$}$}
\begin{center}
\begin{tabular}{|c|c|c|c|c|c|c|c|c|c|}
\noalign{\hrule height0.8pt}
\hline
 $n$ & $2$ & $3$ & $4$ & $5$ & $6$ & $7$ & $8$ & $9$ & $10$\\
\hline
$\chi$ & $-0.04$ & $-0.02$ & $-0.02$ & $-0.06$ & $-0.20$ & $-0.87$ & $-4.58$ & $-28.52$  & $-205.83$\\
\hline
$e$ & $1$ & $1$ & $2$ & $1$ & $2$ & $1$ & $1$ & $-21$  & $-124$\\
\noalign{\hrule height0.8pt}
\end{tabular}
\end{center}
\label{tab:chie}
\end{table}

\begin{problem}
Study the relation between $\chi(\mathrm{Out}\,F_{n})$ and $e(\mathrm{Out}\,F_{n})$.
\end{problem}

%We mention that $\chi(\mathrm{Out}\,F_{11})=-1690.70$ by \cite{sv} and
%we are now working on the evaluation of $e(\mathrm{Out}\,F_{11})$.

We refer to the book \cite{f} edited by Farb,
in particular Bridson and Vogtmann \cite{bv},
as well as Farb \cite{f1}
for various problems concerning
$\mathrm{Out}\, F_n$, mapping class groups,
$\mathrm{GL}(n,\Z)$ and other related groups.

\section{The case of $\mathfrak{a}_g$ and the moduli spaces of curves}\label{sec:a}

Finally we consider the case of $\mathfrak{a}_g$.
From the point of view of computations,
this case is the most heavy one
among the three Lie algebras as can be seen 
by comparing the size of the numbers in the former
two tables Tables \ref{tab:c} and \ref{tab:h} with the present one
depicted in Table \ref{tab:a}. 

As was already mentioned in Section $1$, in \cite{mss2a} we determine the values $\chi(H_*(\mathfrak{a}^+_{\infty})^{\mathrm{Sp}}_{w})$
for all $w\leq 500$ by a completely different argument which makes use of a formula of
Gorsky \cite{go2}
for the equivariant Euler characteristics of the moduli spaces of curves $\mathbf{M}_g^m$. 
We confirm that the two values for $w\leq 16$ are the same.
We think that this coincidence serves as a strong evidence for the
accuracy of our computations in the other two cases $\mathfrak{c}_\infty, \mathfrak{h}_{\infty,1}$.
We mention that the existence of the formula of Gorsky
depends heavily on the fact that the totality of $\mathbf{M}_g^m$ for various
$g$ and $m$ makes a beautiful unified world. 
It seems unlikely that similar formulas will be
found in the other two cases, at least in a near future.

%Although we do not obtain new results in this case so far, we would like
%to understand the consistency with the known results as a confirmation
%of the accuracy of our computations.

\begin{table}[h]
\caption{$\text{Case of $\mathfrak{a}_g$}$}
\begin{center}
\begin{tabular}{|c|r|r|r|r|r|r|r|r|}
\noalign{\hrule height0.8pt}
\hline
$w$ & $2$ & $4$ & $6$ & $8$ & $10$ & $12$ & $14$ & $16$\\
\hline
$C_1$ & $1$ & $2$ & $17$ & $88$ & $897$ & $9562$ & $127071$ & $1912970$\\
\hline
$C_2$ & $3$ & $8$ & $111$ & $1146$ & $14735$ & $212965$ & $3483545$ & $63522967$\\
\hline
$C_3$ & {} & $18$ & $289$ & $5561$ & $99285$ & $1918401$ & $39558275$ & $880137499$ \\
\hline
$C_4$ & {} & $17$ & $403$ & $13653$ & $366878$ & $9590016$ & $253890290$ & $6966037951$ \\
\hline
$C_5$ & {} & {} & $320$ & $19138$ & $827528$ & $30225682$ & $1047033554$ & $35904134757$ \\
\hline
$C_6$ & {} & {} & $124$ & $15860$ & $1193367$ & $63894814$ & $2967604968$ & $129283963277$ \\
\hline
$C_7$ & {} & {} & {} & $7466$ & $1111456$ & $93211250$ & $6001387476$ & $339000966002$ \\
\hline
$C_8$ & {} & {} & {} & $1618$ & $651577$ & $94398768$ & $8825700683$ & $663520078156$ \\
\hline
$C_9$ & {} & {} & {} & {} & $220905$ & $65356859$ & $9484791225$ & $982320832329$ \\
\hline
$C_{10}$ & {} & {} & {} & {} & $33564$ & $29594121$  & $7384704777$ & $1104356533575$ \\
\hline
$C_{11}$ & {} & {} & {} & {} & {} & $7925093$ & $4061192184$ & $938047301852$ \\
\hline
$C_{12}$ & {} & {} & {} & {} & {} & $956263$ & $1497800877$  & $592700462357$\\
\hline
$C_{13}$ & {} & {} & {} & {} & {} &  {} & $332831365$ & $270228006160$ \\
\hline
$C_{14}$ & {} & {} & {} & {} & {} & {} & $33736198$ & $84077896041$ \\
\hline
$C_{15}$ & {} & {} & {} & {} & {} &  {} &  {}  & $15987868100$ \\
\hline
$C_{16}$ & {} & {} & {} & {} & {} & {} &   {}  & $1402665692$ \\
\hline
$\text{total}$ & $4$ & $45$ & $1264$ & $64530$ & $4520192$ & $397293794$ & $41933842488$ & $5164742319685$\\
\hline
$\chi$ & $2$ & $5$ & $12$ & $24$ & $50$ & $100$ & $188$ & $347$ \\
\noalign{\hrule height0.8pt}
\end{tabular}
\end{center}
\label{tab:a}
\end{table}

\begin{proof}[Proof of Theorem $\ref{thm:chi}\ \mathrm{(iii)}$]
This follows from Table \ref{tab:a}.
\end{proof}

\begin{thm}[Kontsevich {\cite{kontsevich1, kontsevich2}}]\label{thm:kontsevich-a}
For any $k \ge 1$ and $n \ge 1$, there exists an isomorphism 
\[PH_k (\mathfrak{a}^+_{\infty})^{\mathrm{Sp}}_{2n}
\cong \bigoplus_{\begin{subarray}{c}
2g-2+m=n\\ m>0
\end{subarray}}
H^{2n-k} (\mathbf{M}_g^m;\Q)^{\mathfrak{S}_m}.\]
\label{thm:ka}
\end{thm}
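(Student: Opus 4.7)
The plan is to follow Kontsevich's general dictionary between the stable $\mathrm{Sp}$-invariant (co)homology of a Lie algebra of symplectic derivations of a cyclic operad and a graph complex built from that operad. For the associative operad the relevant combinatorial objects are ribbon (fat) graphs, and by Harer--Mumford--Thurston--Penner these index an orbifold cell decomposition of the moduli spaces $\mathbf{M}_g^m$, so the identification of the two sides is essentially a matching of two chain complexes built on the same set of ribbon graphs.

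First I would apply the framework of Section \ref{sec:p}: the split extension $0\to\mathfrak{a}^+_\infty\to\mathfrak{a}_\infty\to\mathfrak{sp}(\infty,\Q)\to 0$, Borel vanishing, and Milnor--Moore reduce the claim to identifying the primitive part of the $\mathrm{Sp}$-invariant stable homology of $\mathfrak{a}^+_\infty$ with a combinatorial complex of \emph{connected} ribbon graphs. Using Proposition \ref{prop:a}, each piece $\mathfrak{a}_\infty(k)$ is the cyclic tensor space $(H_\Q^{\otimes(k+2)})^{\Z/(k+2)}$, so a generator of the $\mathrm{Sp}$-invariant chain complex is a wedge of cyclic tensors. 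Weyl's first main theorem for $\mathrm{Sp}$ expresses any such invariant as a sum indexed by perfect matchings of the tensor slots, and a cyclic vertex together with a matching of its half-edges is exactly a ribbon graph, vertex valences $\geq 3$ coming from the $\mathfrak{a}^+$ positivity.

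Second, I would match the weight to the topology of the thickened surface. For a ribbon graph $\Gamma$ with $V$ vertices of valences $n_v$ and $E$ edges, the weight contributed to the chain complex is $\sum_v(n_v-2)=2E-2V$, while the ribbon structure thickens $\Gamma$ to a surface $\Sigma$ with $m$ boundary circles and genus $g$ satisfying $V-E+m=2-2g$, hence weight $= 2(2g-2+m)$, so $w=2n$ with $n=2g-2+m$. The disjoint-union product on ribbon graphs corresponds to the free graded commutative bialgebra structure, so primitive elements are connected ribbon graphs. The last step is to invoke the ribbon graph model of moduli: $\mathbf{M}_g^m\times\R_{>0}^m$ is canonically identified with the moduli space of metric ribbon graphs of type $(g,m)$, yielding an orbifold cell decomposition whose cell indexed by $\Gamma$ has dimension $E(\Gamma)$; the associated orbifold cellular cochain complex computes $H^*(\mathbf{M}_g^m;\Q)$. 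A degree shift converts the $V$-grading (number of vertices, equal to the wedge-degree $k$) to the cohomological degree $2n-k$, using $\dim\mathbf{M}_g^m=6g-6+2m$ together with the combinatorial identity $V+E=$ constant on each $(g,m)$-stratum. The passage from ribbon graphs with \emph{labeled} boundary (which index the cells in the Penner construction) to \emph{unlabeled} boundary (which is what Kontsevich's cyclic construction produces) is exactly the descent to $\mathfrak{S}_m$-coinvariants, equivalently, over $\Q$, to $\mathfrak{S}_m$-invariants.

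The main technical obstacle will be the careful bookkeeping of orientations, signs, and degree shifts. Kontsevich's associative case is based on a genuinely \emph{odd} symplectic form on the half-edges, Penner's orientation convention requires a coherent ordering of the edge set, and the swap from homological to cohomological degree via $H_k(\mathfrak{a}^+_\infty)\leftrightarrow H^{2n-k}(\mathbf{M}_g^m)$ must be reconciled with the virtual codimension of the cell $\Gamma$ inside $\mathbf{M}_g^m$. Aligning these conventions so that the $\mathfrak{S}_m$-action appears with the trivial character rather than the sign character, and so that the final degree of the cohomology class is $2n-k$ rather than a shifted variant, is the delicate part of the argument.
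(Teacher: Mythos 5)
The paper itself offers no proof of this statement: it is quoted as Kontsevich's theorem from \cite{kontsevich1,kontsevich2}, so the only meaningful comparison is with Kontsevich's original argument, and your sketch follows essentially that route --- Weyl's first fundamental theorem for $\mathrm{Sp}$ converting $\mathrm{Sp}$-invariant wedges of cyclic words into ribbon graphs, primitivity corresponding to connectedness, the weight bookkeeping $w=\sum_v(n_v-2)=2E-2V=2(2g-2+m)$, and the Strebel--Penner identification of $\mathbf{M}_g^m\times\R_{>0}^m$ with the space of metric ribbon graphs. Two points deserve correction or emphasis. First, your combinatorial identity is misstated: what is constant on a $(g,m)$-stratum is $V-E=2-2g-m$, not $V+E$; with this, $2n-k=2E-3V=(4g-4+2m)-V=E_{\max}-E$, i.e.\ the cohomological degree $2n-k$ is exactly the codimension of the cell labeled by $\Gamma$ inside the decorated moduli space of dimension $6g-6+3m$, which is what your degree shift needs. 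Second, the ``delicate part'' you flag is genuine and is where the substance of Kontsevich's proof lies: an open-cell orbifold decomposition naturally computes compactly supported cohomology of $\mathbf{M}_g^m\times\R_{>0}^m$ with a local system recording orderings/orientations of edges, so to land on ordinary cohomology $H^{2n-k}(\mathbf{M}_g^m;\Q)$ carrying the trivial rather than the sign character of $\mathfrak{S}_m$ one must invoke rational Poincar\'e duality for the complex orbifold $\mathbf{M}_g^m$ and match the orientation conventions of the associative graph complex; as an outline, however, your plan is the correct reconstruction of the cited argument.
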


\begin{prop}
The weight generating function, denoted by $a(t)$, for the
$\mathrm{Sp}$-invariant stable homology group $H_*(\mathfrak{a}^+_{\infty})^{\mathrm{Sp}}$
is given by
$$
a(t)=\prod_{n=1}^\infty (1-t^{2n})^{- a^{\mathrm{pr}}_{2n}}
$$
where 
$$
a^{\mathrm{pr}}_{2n}=
\sum_{\begin{subarray}{c}
2g-2+m=n\\ m>0
\end{subarray}}
e(\mathbf{M}_g^m/\mathfrak{S}_m).
$$
\label{prop:at}
\end{prop}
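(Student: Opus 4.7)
The plan is to apply Proposition~\ref{prop:wp} directly. By Kontsevich's structure theorem cited in the introduction, $H_*(\mathfrak{a}_\infty)$ is a free graded commutative algebra, and combining with the splitting $H_*(\mathfrak{a}_\infty)\cong H_*(\mathfrak{sp}(\infty,\Q))\otimes H_*(\mathfrak{a}^+_\infty)^{\mathrm{Sp}}$ derived in Section~2, the $\mathrm{Sp}$-invariant stable homology $H_*(\mathfrak{a}^+_\infty)^{\mathrm{Sp}}$ is itself a bigraded free graded commutative algebra satisfying the hypotheses of Proposition~\ref{prop:wp}. Thus
$$
a(t)=\prod_{w=1}^{\infty}(1-t^w)^{-\chi^{\mathrm{pr}}_w(H_*(\mathfrak{a}^+_\infty)^{\mathrm{Sp}})}.
$$

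Next I would verify that the primitive part is concentrated in even weights, so that only the factors with $w=2n$ survive. This follows from an elementary parity argument using Proposition~\ref{prop:a}: each $\mathfrak{a}_g(k)$ is a quotient of $H_\Q^{\otimes(k+2)}$, so a chain in $C^{(w)}_*(\mathfrak{a}^+_g)$ of weight $w=\sum j\,i_j$ lives in a tensor power of $H_\Q$ of total exponent $\sum i_j(j+2)=w+2\sum i_j$, which has the same parity as $w$. Since $H_\Q^{\otimes N}$ contains no $\mathrm{Sp}$-invariants when $N$ is odd, we conclude $H_*(\mathfrak{a}^+_\infty)^{\mathrm{Sp}}_w=0$ for odd $w$, and a fortiori the primitive part vanishes in odd weights.

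For even weight $w=2n$, I would invoke Kontsevich's Theorem~\ref{thm:ka} to identify
$$
PH_k(\mathfrak{a}^+_\infty)^{\mathrm{Sp}}_{2n}\cong \bigoplus_{\substack{2g-2+m=n\\ m>0}} H^{2n-k}(\mathbf{M}_g^m;\Q)^{\mathfrak{S}_m}.
$$
Taking the alternating sum over $k$ and using that $(-1)^{2n-k}=(-1)^k$ because $2n$ is even gives
$$
\chi^{\mathrm{pr}}_{2n}(H_*(\mathfrak{a}^+_\infty)^{\mathrm{Sp}})=\sum_{\substack{2g-2+m=n\\ m>0}}\chi\bigl(H^*(\mathbf{M}_g^m;\Q)^{\mathfrak{S}_m}\bigr).
$$
Finally, since $\mathfrak{S}_m$ is a finite group acting on $\mathbf{M}_g^m$, the rational cohomology of the unordered moduli space satisfies $H^*(\mathbf{M}_g^m/\mathfrak{S}_m;\Q)\cong H^*(\mathbf{M}_g^m;\Q)^{\mathfrak{S}_m}$, so the right-hand side equals $\sum e(\mathbf{M}_g^m/\mathfrak{S}_m)=a^{\mathrm{pr}}_{2n}$, and the product formula for $a(t)$ follows.

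I do not expect a genuine obstacle here: the result is essentially a formal consequence of Proposition~\ref{prop:wp} together with Kontsevich's identification of the primitive part of $H_*(\mathfrak{a}^+_\infty)$. The only point needing even a brief argument is the vanishing at odd weights, which in any case is immediate from the symplectic parity observation above.
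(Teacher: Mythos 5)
Your proposal is correct and follows essentially the same route as the paper, which proves the proposition simply by combining Kontsevich's Theorem \ref{thm:ka} with the formal generating-function identity of Proposition \ref{prop:wp}. The extra details you supply (the symplectic parity argument for vanishing in odd weights and the identification $H^*(\mathbf{M}_g^m/\mathfrak{S}_m;\Q)\cong H^*(\mathbf{M}_g^m;\Q)^{\mathfrak{S}_m}$ via the transfer) are accurate fillings-in of steps the paper leaves implicit.
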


\begin{proof}
Note that $H^\ast (\mathbf{M}_g^m;\mathbb{Q})^{\mathfrak{S}_{m}} 
\cong H^\ast (\mathbf{M}_g^m/\mathfrak{S}_{m};\mathbb{Q})$.  Our claim 
follows from Theorem \ref{thm:ka} and Proposition \ref{prop:wp}.
\end{proof}

\begin{proof}[Proof of Theorem $\ref{thm:chip}\ \mathrm{(iii)}$]
By Theorem \ref{thm:chi} (iii), we see that the weight generating function
$$
a(t)=\sum_{w=0}^\infty \chi(H_*(\mathfrak{a}^+_{\infty})^{\mathrm{Sp}}_w)t^w
$$
for $H_*(\mathfrak{a}^+_{\infty})^{\mathrm{Sp}}$, up to weight $16$, is given by
$$
a(t)=1+2t^2+5t^4+12t^6+24t^8+50t^{10}+100t^{12}+188t^{14}+347t^{16}+\cdots.
$$
By applying Proposition \ref{prop:wc}, we can inductively determine the Euler characteristics of the
primitive parts, namely $e(\mathbf{M}_g^m/\mathfrak{S}_m)$. 
If we put
\begin{align*}
\bar{a}(t)=(1-t^2)^{-2}(1-t^4)^{-2}&(1-t^6)^{-4}(1-t^8)^{-2}\\
&(1-t^{10})^{-6}(1-t^{12})^{-6}(1-t^{14})^{-6}(1-t^{16})^{-1},
\end{align*}
then we see that
$$
a(t)-\bar{a}(t)\equiv 0 
%\mathrm{mod}\ 
\bmod t^{17}.
$$
By Proposition \ref{prop:at}, we can now conclude that 
$a^{\mathrm{pr}}_{2n}=2,2,4,2,6,6,6,1$ for $n=1,2,\ldots,8$, respectively.
The result is depicted in the fourth row of Table \ref{tab:an} as well as
the following proposition \ref{prop:ass}.
\end{proof}

\begin{table}[h]
\caption{$\text{Numbers of new generators for $H_*(\mathfrak{a}^+_{\infty})^{\mathrm{Sp}}$}$}
\label{tab:aan}
\begin{center}
\begin{tabular}{|c|r|r|r|r|r|r|r|r|}
\noalign{\hrule height0.8pt}
\hline
$w$ & $2$ & $4$ & $6$ & $8$ & $10$ & $12$ & $14$ & $16$\\
\hline
$\chi$ & $2$ & $5$ & $12$ & $24$ & $50$ & $100$ & $188$ & $347$ \\
\hline
$\text{$\chi$ of lower terms}$ & $0$ & $3$ & $8$ & $22$ & $44$ & $94$ & $182$ & $346$ \\
\hline
$\text{$\chi$ of primitive part}$ & $2$ & $2$ & $4$ & $2$ & $6$ & $6$ & $6$ & $1$ \\
\noalign{\hrule height0.8pt}
\end{tabular}
\end{center}
\label{tab:an}
\end{table}

\begin{prop}
We have the following equalities.
\begin{align*}
\mathrm{(1)}\quad &e(\mathbf{M}_0^3/\mathfrak{S}_3)+e(\mathbf{M}_1^1) =2\\
\mathrm{(2)}\quad &e(\mathbf{M}_0^4/\mathfrak{S}_4)+e(\mathbf{M}_1^2/\mathfrak{S}_2) =2\\
\mathrm{(3)}\quad &e(\mathbf{M}_0^5/\mathfrak{S}_5)+e(\mathbf{M}_1^3/\mathfrak{S}_3)+e(\mathbf{M}_2^1) =4\\
\mathrm{(4)}\quad &e(\mathbf{M}_0^6/\mathfrak{S}_6)+e(\mathbf{M}_1^4/\mathfrak{S}_4)+e(\mathbf{M}_2^2/\mathfrak{S}_2) =2\\
\mathrm{(5)}\quad &e(\mathbf{M}_0^7/\mathfrak{S}_7)+e(\mathbf{M}_1^5/\mathfrak{S}_5)+e(\mathbf{M}_2^3/\mathfrak{S}_3)+e(\mathbf{M}_3^1) =6\\
 \mathrm{(6)}\quad &e(\mathbf{M}_0^8/\mathfrak{S}_8)+e(\mathbf{M}_1^6/\mathfrak{S}_6)+e(\mathbf{M}_2^4/\mathfrak{S}_4)+e(\mathbf{M}_3^2/\mathfrak{S}_2) =6\\
\mathrm{(7)}\quad &e(\mathbf{M}_0^9/\mathfrak{S}_9)+e(\mathbf{M}_1^7/\mathfrak{S}_7)+e(\mathbf{M}_2^5/\mathfrak{S}_5)+e(\mathbf{M}_3^3/\mathfrak{S}_3)+e(\mathbf{M}_4^1) =6\\
 \mathrm{(8)}\quad &e(\mathbf{M}_0^{10}/\mathfrak{S}_{10})+e(\mathbf{M}_1^8/\mathfrak{S}_8)+e(\mathbf{M}_2^6/\mathfrak{S}_6)+e(\mathbf{M}_3^4/\mathfrak{S}_4)+e(\mathbf{M}_4^2/\mathfrak{S}_2) =1.
\end{align*}
\label{prop:ass}
\end{prop}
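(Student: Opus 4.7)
The plan is to observe that Proposition \ref{prop:ass} is essentially a spelled-out form of Theorem \ref{thm:chip}(iii): for each fixed $n$ with $1\leq n\leq 8$, the sum appearing in Theorem \ref{thm:chip}(iii) runs over all pairs $(g,m)$ of non-negative integers with $2g-2+m=n$ and $m>0$, and each equation of Proposition \ref{prop:ass} is obtained by enumerating these pairs explicitly.

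First I would fix $n$ and solve $2g-2+m=n$ under the constraints $g\geq 0$ and $m\geq 1$; equivalently $m=n-2g+2$ with $g$ ranging over $0,1,\ldots,\lfloor (n+1)/2\rfloor$. For $n=1$ this yields $(g,m)\in\{(0,3),(1,1)\}$; for $n=2$ we get $\{(0,4),(1,2)\}$; for $n=3$ we get $\{(0,5),(1,3),(2,1)\}$; and so on, up through $n=8$ where the pairs are $\{(0,10),(1,8),(2,6),(3,4),(4,2)\}$ (the would-be contribution $(5,0)$ being excluded by $m>0$). At each step the sum $\sum_{2g-2+m=n,\,m>0} e(\mathbf{M}_g^m/\mathfrak{S}_m)$ becomes exactly the left-hand side of the corresponding equation in the proposition.

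Next I would substitute the values already established in Theorem \ref{thm:chip}(iii), namely
\[
\sum_{2g-2+m=n,\,m>0} e(\mathbf{M}_g^m/\mathfrak{S}_m)=2,2,4,2,6,6,6,1\qquad(n=1,\ldots,8),
\]
to obtain the eight asserted equalities. Since Theorem \ref{thm:chip}(iii) has already been deduced from Theorem \ref{thm:chi}(iii) via the weight generating function $a(t)$ and Proposition \ref{prop:wc}, no new computation is required beyond the combinatorial enumeration of pairs $(g,m)$.

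There is essentially no hard step: the only thing to be careful about is the boundary condition $m\geq 1$, which eliminates contributions from $\mathbf{M}_g^0=\mathbf{M}_g$ in the even-$n$ cases (in particular $(3,0)$ for $n=4$, $(4,0)$ for $n=6$, and $(5,0)$ for $n=8$). Once these exclusions are handled correctly, the proposition follows immediately from Theorem \ref{thm:chip}(iii).
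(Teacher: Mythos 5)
Your proposal is correct and coincides with the paper's own treatment: the paper obtains Proposition \ref{prop:ass} directly from the computation in the proof of Theorem \ref{thm:chip}(iii), where the values $a^{\mathrm{pr}}_{2n}=\sum_{2g-2+m=n,\ m>0}e(\mathbf{M}_g^m/\mathfrak{S}_m)=2,2,4,2,6,6,6,1$ are determined via the weight generating function $a(t)$ and Proposition \ref{prop:wc}, and the eight equalities are simply the explicit enumerations of the pairs $(g,m)$ for $n=1,\ldots,8$. Your enumeration, including the exclusion of the $m=0$ terms, matches the paper exactly.
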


Now we check that our result above is consistent with the known results.
By Getzler \cite{getz99}, $e(\mathbf{M}_0^m/\mathfrak{S}_m)=1$ for all $m\geq 3$.
Also he determined the $\mathfrak{S}_m$-equivariant Serre polynomial of $\mathbf{M}_1^m$. 
%Also he determined $H^*(\mathbf{M}_1^m)$ as an $\mathfrak{S}_m$-module.
In particular, he obtained the following formula
\begin{align*}
\sum_{m=1}^\infty e(\mathbf{M}_1^m/\mathfrak{S}_m) x^m&=
(x+x^2+x^3)\frac{(1-x^4-2x^8-x^{12}+x^{16})}{(1-x^8)(1-x^{12})}\\
&=x+x^2+x^3-x^5-x^6-x^7-x^9-x^{10}-x^{11}-x^{13}-x^{14}-\cdots
\end{align*}
so that $e(\mathbf{M}_1^m/\mathfrak{S}_m)=1,1,1,0,-1,-1,-1,0,-1$ for 
$m=1,\ldots,9$.
It is well known that $e(\mathbf{M}_2^1)=2$ and Getzler proved $e(\mathbf{M}_2^2)=1$.
The fifth equality $\mathrm{(5)}$ (case of $w=10$) was first proved by Getzler and Kapranov \cite{gk},
and then in \cite{getz98} it was shown that $e(\mathbf{M}_2^3/\mathfrak{S}_3)=0$
by using the result of
Looijenga \cite{loo} determining $H^*(\mathbf{M}_3;\Q), H^*(\mathbf{M}^1_3;\Q)$,
especially $e(\mathbf{M}^1_3)=6$.
Tommasi \cite{tom05th} (see also \cite{tom05}\cite{tom06}) determined 
$H^*(\mathbf{M}_4,\Q)$ as well as the
equivariant Hodge Euler characteristics of $\mathbf{M}_2^4$ and $\mathbf{M}_3^2$
and in particular $e(\mathbf{M}_2^4/\mathfrak{S}_4)=1$ and $e(\mathbf{M}_3^2/\mathfrak{S}_2)=5$.
The sixth equality $\mathrm{(6)}$ is consistent with these results.
Next we consider the seventh equality. By Harer and Zagier \cite{hz}, $e(\mathbf{M}_4^1)=2$.
Gorsky \cite{go1} (Theorem $2$) extended the work of Getzler and obtained a formula for
the $\mathfrak{S}_m$-equivariant Euler characteristic for $\mathbf{M}_2^m$.
More precisely he obtained a formula for the generating function
$\sum_{m=0}^\infty e^{\mathfrak{S}_m}(\mathbf{M}_2^m) t^m$ in terms of
Newton's power sum polynomials.
He then made a computer computation and determined $e^{\mathfrak{S}_m}(\mathbf{M}_2^m)$
explicitly for all $m\leq 4$ which coincide with the former results of Getzler and Tommasi cited above.
By making use of our Method (I) and Method (V) described in Section $4$, we 
extended Gorsky's computation to obtain closed formulas for the cases $5\leq m\leq 35$.
Here we describe the results for $m=5,6,7,8$.
\begin{align*}
e^{\mathfrak{S}_5}(\mathbf{M}_2^5)&=2[5]-2[32]+2[41]\\
e^{\mathfrak{S}_6}(\mathbf{M}_2^6)&=-2[3^2]+2[51]-3[2^3]-2[321]+2[41^2]+[2^21^2]-[21^4]-[1^6]\\
e^{\mathfrak{S}_7}(\mathbf{M}_2^7)&=-2[7]-2[43]+2[52]-2[61]-4[32^2]-2[3^21]+4[421]+2[51^2]-\\
&2[2^31]+4[321^2]+2[41^3]-2[21^5]-2[1^7]\\
e^{\mathfrak{S}_8}(\mathbf{M}_2^8)&=-[8]-3[4^2]-4[53]+2[62]-3[71]-7[3^22]+4[42^2]-7[431]+2[521]-\\
&7[61^2]-4[2^4]-3[32^21]-2[3^21^2]-[421^2]-3[51^3]-7[2^31^2]+2[41^4]-\\&3[2^21^4]-
5[21^6]-3[1^8]
\end{align*}
It is amusing to calculate the dimensions of the above expression which give
the integral Euler characteristics of $\mathbf{M}_2^m$. The results are
$0,-24$ for $m=5, 6$
and $(-1)^{m+1} (m+1)!/240$ for $m\geq 7$ which coincide with the known values obtained by
Harer and Zagier \cite{hz}.
On the other hand, as for the coefficients of the trivial representation $[m]$, it can be shown that
Gorsky's formula implies
\begin{align*}
\sum_{m=0}^\infty &e(\mathbf{M}_2^m/\mathfrak{S}_m) x^m=
-\frac{1}{240}(1+x)^{-2}-\frac{1}{240}(1+x)^{6}(1+x^2)^{-4}+\frac{1}{12}(1+x)^{2}(1+x^2)^{-2}\\
&-\frac{1}{12}(1+x)^{4}(1+x^3)^{-2}-\frac{1}{8}(1+x)^{2}(1+x^2)^{2}(1+x^4)^{-2}+\frac{2}{5}(1+x)^{3}(1+x^5)^{-1}\\
&-\frac{1}{12}(1+x^2)^{2}(1+x^3)^{2}(1+x^6)^{-2}
+\frac{1}{6}(1+x)^{2}(1+x^2)(1+x^6)^{-1}\\
&+\frac{1}{4}(1+x)^{2}(1+x^4)(1+x^8)^{-1}
+\frac{2}{5}(1+x)(1+x^2)(1+x^5)(1+x^{10})^{-1}\\
&=1+2x+x^2+x^4+2x^5-2x^7-x^8-x^{10}+2x^{12}+2x^{13}-3x^{14}+\cdots
\end{align*}
so that we find $e(\mathbf{M}_2^m/\mathfrak{S}_m)=2,0,-2,-1,0,-1,0,2,2,-3,\ldots$ for 
$m=5,6,\ldots,$ $14,\ldots$.
By substituting these known values in the seventh equality $\mathrm{(7)}$, we obtain
$e(\mathbf{M}_3^3/\mathfrak{S}_3)=2$. This should be consistent with 
the work of Bergstr\"om \cite{b} determining $H^*(\overline{\mathbf{M}}_3^m;\Q)$
as well as $H^*(\mathbf{M}_3^m;\Q)$
as an $\mathfrak{S}_m$-module for all $m\leq 5$, although the latter is not described explicitly.
From the eighth equality $\mathrm{(8)}$ together with the above results, we conclude
$
e(\mathbf{M}_3^4/\mathfrak{S}_4)+e(\mathbf{M}_4^2/\mathfrak{S}_2)=0.
$
Now Gorsky \cite{go2} (Theorem $3$) extended his own result cited above to obtain
a formula for the equivariant Euler characteristics of {\it all} the moduli spaces $\mathbf{M}_g^m$
again in terms of Newton power sum polynomials. It is an extensive generalization
of the results of Getzler cited above
as well as the formula of Harer and Zagier \cite{hz} for the {\it integral} Euler characteristics of
the moduli space of curves.
By using Gorsky's formula, we obtain explicit closed formulas for the equivariant
Euler characteristic $e^{\mathfrak{S}_m} (\mathbf{M}_g^m)$ as well as
the generating functions
$$
\sum_{m=0}^\infty e(\mathbf{M}_g^m/\mathfrak{S}_m) x^m
$$
for $g\leq 125$. Here we describe the values for $g=3,4; m\leq 10$.
\begin{align*}
&e(\mathbf{M}_3^m/\mathfrak{S}_m)=3,6,5,2,0,0,1,0,-1,2,2 \quad (m=0,1,\ldots,10)\\
&e(\mathbf{M}_4^m/\mathfrak{S}_m)=2,2,0,2,0,2,10,6,-19,-12,34  \quad (m=0,1,\ldots,10).
\end{align*}
In particular $e(\mathbf{M}_3^4/\mathfrak{S}_4)=e(\mathbf{M}_4^2/\mathfrak{S}_2)=0$.
Thus we find that our results are completely consistent with known results in algebraic geometry,
or can be deduced from them by explicit computations. In our forthcoming paper
\cite{mss2a}, we will further discuss these formulas.

We mention that the integral Euler characteristic of the moduli space $\mathbf{M}_g^m$,
rather than its quotient $\mathbf{M}_g^m/\mathfrak{S}_m$, is known by Harer and Zagier \cite{hz}
as well as Bini and Harer \cite{bh} up to certain values of $g,m$.
In particular, $e(\mathbf{M}_3^4)=4, e(\mathbf{M}_4^2)=-2$.

In our paper \cite{mss1}, we proved that the stable abelianization of $\mathfrak{a}_g$
is trivial, namely $H_1(\mathfrak{a}^+_\infty)^{\mathrm{Sp}}=0$ and
deduced from it the vanishing result, $H^{4g-5}(\Mg;\Q)=0$,
of the top rational cohomology group of the mapping class group $\Mg$
of a closed oriented surface of genus $g$ for all $g\geq 2$.
See also Church, Farb and Putman \cite{cfp1}.

\begin{problem}
For each $k\geq 2$, determine whether the $\mathrm{Sp}$-invariant stable homology group
$H_k(\mathfrak{a}^+_{\infty})^{\mathrm{Sp}}$
is {\it finite} dimensional or not, especially for the low
values $k=2,3,\ldots$.
\end{problem}

\begin{remark}
Kontsevich made a conjecture in \cite{kontsevich1} that
the stable homology of each of the infinite dimensional Lie
algebras he considered is {\it finite} dimensional in each degree.
We would like to mention the following implication of this conjecture
in the case of $\mathfrak{a}_g$. Namely,
if $H_k(\mathfrak{a}^+_{\infty})^{\mathrm{Sp}}$ is finite dimensional,
then
$$
 H_{4g-4-k}(\mathbf{M}_{g};\Q) =0\;
\text{for all $g$ but finitely many possible exceptions}.
$$
This is because, if the assumption is valid, then $PH_c^k$ is also finite dimensional
so that $H_{4g-4+2m-k}(\mathbf{M}^m_{g};\Q)^{\mathfrak{S}_m}=0$ for all
$g$ and $m>0$ except for finitely many values. If we put $m=1$ here, we see that 
$H_{4g-2-k}(\mathbf{M}^1_{g};\Q)=0$ for all $g$ but finitely many exceptions.
Now if $H_{4g-4-k}(\mathbf{M}_g;\Q)\not=0$ for some $g\geq 2$,
then as was proved in \cite{morita87} that
the homomorphism 
$$
H^{*}(\mathbf{M}_g;\Q) \xrightarrow{p^*}
H^{*}(\mathbf{M}^1_{g};\Q)  \xrightarrow{\cup\ e}
H^{*+2}(\mathbf{M}^1_{g};\Q)
$$
is injective for any $g\geq 2$ where $e\in H^2(\mathbf{M}^1_{g};\Q)$ denotes
the Euler class. It follows that $H_{4g-2-k}(\mathbf{M}^1_{g};\Q)\not=0$
for such $g$. The claim follows. 

Looijenga \cite{loo}
determined the rational homology groups of both of $\mathbf{M}_3, \mathbf{M}_3^1$ 
and in particular
he found an unstable cohomology class in $H^6(\mathbf{M}_3;\Q)$
which is the first known unstable class of the moduli spaces
{\it without} punctures. He also showed that
$H^8(\mathbf{M}_3^1;\Q)\cong\Q, H^6(\mathbf{M}_3^1;\Q)\cong\Q$.
It follows that $PH_2(\mathfrak{a}^+_\infty)^{\mathrm{Sp}}_{10}\cong\Q$
and 
$PH_4(\mathfrak{a}^+_\infty)^{\mathrm{Sp}}_{10}\supset \Q$.
It is a very important problem to determine whether $H_2(\mathfrak{a}^+_{\infty})^{\mathrm{Sp}}$
is finite dimensional or not.

Recently Church, Farb and Putman \cite{cfp2} proposed a new stability conjecture about the unstable
cohomology of $\mathrm{SL}(n,\Z), \mathrm{Aut}\, F_n$
and the mapping class groups. In the case of the mapping class groups, our argument above
implies the following. Namely, if the above conjecture
of Kontsevich is true, then their conjecture also
holds in the form that all the groups are trivial.
\end{remark}

\section{Dimensions of the $\mathrm{Sp}$-invariant subspaces
$\mathfrak{h}_{g,1}(2k)^{\mathrm{Sp}}$}\label{sec:i}

As an application of our consideration, we obtain a complete description of 
how the $\mathrm{Sp}$-invariant subspaces $\mathfrak{h}_{g,1}(k)^{\mathrm{Sp}}$
degenerate with respect to $g$. It turns out that this degeneration is perfectly
compatible with the orthogonal direct sum decomposition of $\mathfrak{h}_{g,1}(k)^{\mathrm{Sp}}$
with respect to the canonical metric on it introduced in \cite{morita11}.
The results for all $k\leq 20$ are depicted in
Tables \ref{tab:i1} and \ref{tab:i2}. In the tables, the symbol $*$ denotes that
the dimension of the $\mathrm{Sp}$-invariant subspaces {\it stabilizes} there
with respect to the genus $g$. In general, we can show that the stable range
is given by
\begin{align*}
&\dim\, \mathfrak{h}_{k,1}(2k)^{\mathrm{Sp}}=\dim\, \mathfrak{h}_{k+1,1}(2k)^{\mathrm{Sp}}=\cdots\quad (\text{$k$ odd})\\
&\dim\, \mathfrak{h}_{k-1,1}(2k)^{\mathrm{Sp}}=\dim\, \mathfrak{h}_{k,1}(2k)^{\mathrm{Sp}}=\cdots\quad (\text{$k$ even}).
\end{align*}

\begin{table}[h]
\caption{$\text{Dimensions of $\mathfrak{h}_{g,1}(2k)^{\mathrm{Sp}}$\  $\mathrm{(i)}$}$}
\begin{center}
\begin{tabular}{|c|r|r|r|r|r|r|r|r|r|}
\noalign{\hrule height0.8pt}
\hfil {} & $g=1$ & $g=2$ & $g=3$ & $g=4$ & $g=5$  \\
\hline
$\mathfrak{h}_{g,1}(2)^{\mathrm{Sp}}$ & $1*$ & $1$ & $1$ & $1$ & $1$ \\
\hline
$\mathfrak{h}_{g,1}(4)^{\mathrm{Sp}}$ & $0*$ & $0$ & $0$ & $0$ & $0$ \\
\hline
$\mathfrak{h}_{g,1}(6)^{\mathrm{Sp}}$ & $1$ & $4$ & $5*$ & $5$ & $5$ \\
\hline
$\mathfrak{h}_{g,1}(8)^{\mathrm{Sp}}$ & $0$ & $2$ & $3*$ & $3$ & $3$ \\
\hline
$\mathfrak{h}_{g,1}(10)^{\mathrm{Sp}}$ & $3$ & $51$ & $97$ & $107$ & $108*$ \\                                                               
\hline
$\mathfrak{h}_{g,1}(12)^{\mathrm{Sp}}$ & $0$ & $190$ & $544$ & $643$ & $650*$ \\
\hline
$\mathfrak{h}_{g,1}(14)^{\mathrm{Sp}}$ & $11$ & $1691$ & $6471$ & $8505$ & $8795$ \\
\hline
$\mathfrak{h}_{g,1}(16)^{\mathrm{Sp}}$ & $10$ & $11842$ & $69544$ & $104190$ & $110610$ \\
\hline
$\mathfrak{h}_{g,1}(18)^{\mathrm{Sp}}$ & $57$ & $100908$ & $888099$ & $1548984$ & 
$1710798$ \\
\hline
$\mathfrak{h}_{g,1}(20)^{\mathrm{Sp}}$ & $108$ & $869798$ & $12057806$ & $25062360$ & $29129790$ \\
\noalign{\hrule height0.8pt}
\end{tabular}
\end{center}
\label{tab:i1}
\end{table}

\begin{table}[h]
\caption{$\text{Dimensions of $\mathfrak{h}_{g,1}(2k)^{\mathrm{Sp}}$\  $\mathrm{(ii)}$}$}
\begin{center}
\begin{tabular}{|c|r|r|r|r|r|r|r|r|r|}
\noalign{\hrule height0.8pt}
\hfil {} & $g=6$ & $g=7$ & $g=8$ & $g=9$   \\
\hline
$\mathfrak{h}_{g,1}(14)^{\mathrm{Sp}}$ & $8816$ & $ 8817*$ & $8817$ & $8817$  \\
\hline
$\mathfrak{h}_{g,1}(16)^{\mathrm{Sp}}$ & $111131$ & $111148*$ & $111148$ & $111148$ \\
\hline
$\mathfrak{h}_{g,1}(18)^{\mathrm{Sp}}$ & $1728591$ & $1729620$ & $1729656$ & $1729657*$ \\
\hline
$\mathfrak{h}_{g,1}(20)^{\mathrm{Sp}}$ & $29688027$ & $29728348$ & $29729957$ & $29729988*$ \\
\noalign{\hrule height0.8pt}
\end{tabular}
\end{center}
\label{tab:i2}
\end{table}

Details will be given in our forthcoming paper \cite{mss3} where we will discuss how the Lie bracket operation
on $\mathfrak{h}_{g,1}^{\mathrm{Sp}}$ is related to the above description of the orthogonal direct sum decomposition
as well as
the degeneration. This should be important in the investigation of the {\it arithmetic} mapping class group.

\section{Concluding remarks and problems}\label{sec:r}

In this section, we discuss differences between the three cases 
$\mathfrak{g}_\infty=\mathfrak{c}_\infty, \mathfrak{h}_{\infty,1}, \mathfrak{a}_\infty$.

\begin{remark}
First we consider the {\it lowest weight} part of the continuous
cohomology by which we mean the image of the homomorphism
$$
H_c^*(\widehat{\mathfrak{g}}_\infty(1))^{\mathrm{Sp}}\rightarrow H^*_c(\mathfrak{g}^+_\infty)^{\mathrm{Sp}}
$$
which is induced by the Lie algebra homomorphism $\mathfrak{g}_\infty \rightarrow \mathfrak{g}_\infty(1)$.
In the commutative case, this {\it lowest weight} cohomology is precisely the dual of 
$\mathcal{A}(\emptyset)\subset H_*(\mathfrak{c}^+_\infty)^{\mathrm{Sp}}$ so that it is still mysterious. 
On the other hand, in the other two cases, it is 
completely understood because
the {\it lowest weight} cohomology is precisely the totality of 
$H_0(\mathrm{Out}\,F_n;\Q)\ (n\geq 2)$ for the Lie case and 
the totality of 
$H_0(\mathbf{M}_g^m;\Q)\ (2g-2+m> 0, m\geq 1)$ for the associative case.
We mention here that the {\it lowest weight} cohomology of the Lie algebras
$\mathfrak{h}^+_{g,*}$ and $\mathfrak{h}^+_{g}$ surject onto 
the tautological algebras $\mathcal{R}^*(\mathbf{M}_{g}^1)$ and
$\mathcal{R}^*(\mathbf{M}_{g})$, respectively,  by the results of \cite{morita96},\cite{kam}.
Further, it was conjectured in \cite{morita06} that these homomorphisms are isomorphisms.
\label{rem:t}
\end{remark}

\begin{remark}
In the commutative case, it is easy to see that the homomorphism
$\mathfrak{c}^+_g\rightarrow \mathfrak{c}_g(1)=S^3H_\Q$ is nothing other than
the {\it abelianization} of the Lie algebra $ \mathfrak{c}^+_g$ because it is fairy easy to
see that the Poisson bracket $\mathfrak{c}_g(k)\otimes \mathfrak{c}_g(1)\rightarrow\mathfrak{c}_g(k+1)$
is surjective for any $k\geq 1$. Hence the lowest weight cohomology in this case is the same as
those classes which are induced from the abelianization, namely the image
of the homomorphism
$$
H^*_c\left(\widehat{H}_1(\mathfrak{c}^+_{\infty})\right)^{\mathrm{Sp}}\rightarrow 
H^*_c(\widehat{\mathfrak{c}}^+_{\infty})^{\mathrm{Sp}}.
$$
Theorem \ref{thm:ce} shows that this homomorphism is {\it not} surjective and furthermore
we conjecture that the cokernel is {\it infinitely} generated (see Conjecture \ref{conj:e}).
In the associative case, in our former paper \cite{mss1} we have determined the {\it stable} abelianization of 
$\mathfrak{a}_\infty$ which turned out to be very {\it small}. Also the associative version of the
above homomorphism is far from being surjective. In the Lie case, the known abelianization of $\mathfrak{h}^+_{\infty,1}$
turns out to be already very large by \cite{morita93}\cite{ckv}
(although the final answer is not yet known) and
many cohomology classes have been defined by making use of it.
On the other hand, we proved that $e(\mathrm{Out}\, F_{10})=-124$ (Theorem \ref{thm:chip} $\mathrm{(ii)}$)
and it seems that this number is too large to be covered by the above construction.
Because of this, we propose the following problem.
\end{remark}

\begin{problem}
Prove that the natural homomorphism
$$
H^*_c\left(\widehat{H}_1(\mathfrak{h}^+_{\infty,1})\right)^{\mathrm{Sp}}\rightarrow 
H^*_c(\widehat{\mathfrak{h}}^+_{\infty,1})^{\mathrm{Sp}}
$$
is {\it not} surjective.
\end{problem}

In view of the fact that a single irreducible piece $\mathfrak{h}_g(1)=[1^3]_{\mathrm{Sp}}$ gives
rise to the whole tautological algebra $\mathcal{R}^*(\mathbf{M}_{g})$ as already
mentioned in the preceding remark, it seems worthwhile to consider the following problem.

\begin{problem}
Let $\mathrm{Tr}(2k+1): \mathfrak{h}^+_{g,1}\rightarrow S^{2k+1} H_\Q$ be the
$(2k+1)$-st trace map defined in \cite{morita93} and let
$$
\varprojlim_{g\to\infty}\ PH^{2n}(S^{2k+1} H_\Q)^{\mathrm{Sp}}_{2n(2k+1)}\rightarrow 
PH^{2n}_c(\widehat{\mathfrak{h}}^+_{\infty,1})^{\mathrm{Sp}}_{2n(2k+1)}\cong H_{4nk}(\mathrm{Out}\,F_{2nk+n+1};\Q)
$$
be the homomorphism induced from the above {\it single} trace homomorphism. 
It is easy to see that the left hand side is non-trivial for any $n\geq 1$ and $k\geq 1$.
Determine whether the
classes in the image of this homomorphism are non-trivial or not.
\label{prob:st}
\end{problem}

The case $n=1$ corresponds to the original conjecture proposed in \cite{morita99} expecting the 
non-triviality of the classes $\mu_k$. In view of our result that $e(\mathrm{Out}\, F_7)=1$,
the case $n=2, k=1$ which asks whether the homomorphism
$
\varprojlim_{g\to\infty}\ PH^{4}(S^{3} H_\Q)^{\mathrm{Sp}}_{12}\cong\Q^2\rightarrow 
PH^{4}_c(\widehat{\mathfrak{h}}^+_{\infty,1})^{\mathrm{Sp}}_{12}\cong H_{8}(\mathrm{Out}\,F_{7};\Q)
$
is non-trivial or not, should be an important test case.

\begin{remark}
In this paper, we consider only the Euler characteristics of various chain complexes.
However we are planning to study the boundary operators as well.
In fact, we already have a proof of the non-triviality $\mu_2\not= 0$,
which was first proved by Conant and Vogtmann \cite{cov}, in our context.
Also it will be nice if one could construct cycles corresponding to the unstable
cohomology classes found by Looijenga and/or
Tommasi.
\end{remark}

\begin{remark}
We expect that there should be a close relation between the cohomology of 
$\mathrm{Out}\,F_n$ and that of $\mathrm{GL}(n,\Z)$.
For example, Elbaz-Vincent, Gangl and Soul\'e \cite{egs} recently
calculated the rational cohomology of $\mathrm{GL}(n,\Z)$ for 
$n=5,6,7$ and it will be a very interesting problem to compare 
these results with the known results about $H^*(\mathrm{Out}\, F_6;\Q)$.
Also we have a conjectural {\it geometric meaning} of the 
classes $\mu_k\in H_{4k}(\mathrm{Out}\,F_{2k+2};\Q)$.
More precisely, we expect that these classes 
can be interpreted as certain {\it secondary} classes associated with
the difference between two reasons for the Borel regulator classes $\beta_k\in H^{4k+1}(\mathrm{GL}(N,\Z);\R)$
(see Borel \cite{borel1})
to vanish first in $H^{4k+1}(\mathrm{Out}\, F_N;\R)$ by the vanishing theorem of 
Igusa \cite{i} (see also Galatius \cite{ga}) and secondly 
vanish in $H^{4k+1}(\mathrm{GL}(N^*_k,\Z);\R)$ for certain
{\it unknown} critical rank $N^*_k$ depending on $k$
(we conjecture that $N^*_k=2k+2$).
We mention that Bismut and Lott \cite{bl} proved that $\beta_k$ vanishes in
$H^{4k+1}(\mathrm{GL}(2k+1,\Z);\R)$
(we thank Christophe Soul\'e for this information) 
and the above value for $N_k^*$ is the next one after $2k+1$.
This expectation is consistent with the result of Conant and Vogtmann \cite{covs}
where they proved that $\mu_k$ vanishes after one stabilization.
\end{remark}

\begin{remark}
Our {\it ultimate} goal is to enhance our study of the cohomology of the Lie algebra $\mathfrak{h}_{g,1}$
to those of the groups $\mathcal{H}_{g,1}^{\mathrm{smooth}}, \mathcal{H}_{g,1}^{\mathrm{top}}$
of homology cobordism classes of homology cylinders over the surfaces, both in the 
 {\it smooth} as well as the {\it topological} categories and also
 the group $\mathrm{Aut}_0 F^{\mathrm{acy}}_{2g}$ which was introduced by the second named author in \cite{s}.
The group $\mathcal{H}_{g,1}^{\mathrm{smooth}}$ was introduced by
Garoufalidis and Levine \cite{gl}  as an {\it enlargement} of the mapping class group
$\mathcal{M}_{g,1}$.
 For basic facts as well as various problems about this group, we refer
 to their paper cited above.
 We would like to clarify the difference between the above three groups by 
 investigating suitable characteristic classes.
 \end{remark}

\bibliographystyle{amsplain}

\end{document}